\documentclass[a4paper,11pt]{article}

\usepackage[margin=25mm]{geometry}
\usepackage[T1]{fontenc}
\usepackage{tgtermes}
\usepackage{tgheros}
\DeclareEmphSequence{\itshape,\upshape}

\usepackage{amsmath,amssymb,amsthm}
\numberwithin{equation}{section}
\theoremstyle{plain}
\newtheorem{theorem}{Theorem}[section]
\newtheorem{proposition}[theorem]{Proposition}
\newtheorem{lemma}[theorem]{Lemma}
\newtheorem{corollary}[theorem]{Corollary}
\newtheorem{maintheorem}{Theorem}
\newtheorem{maincorollary}[maintheorem]{Corollary}

\theoremstyle{definition}
\newtheorem{definition}[theorem]{Definition}
\theoremstyle{remark}
\newtheorem{remark}[theorem]{Remark}

\usepackage{enumitem}
\setlist[enumerate]{label=\textup{(\arabic*)}}

\usepackage{array,booktabs,longtable}
\usepackage{tikz}
\usepackage[section]{placeins}
\usetikzlibrary{arrows.meta}

\usepackage{xcolor}
\usepackage[hyperfootnotes=false]{hyperref}
\hypersetup{
  unicode=true,
  colorlinks=true,
  linkcolor=blue!50!black,
  citecolor=green!35!black,
  urlcolor=blue!60!black,
  pdftitle={Rotation fixed points of undecorated braid varieties and fusion rings of affine Lie algebras},
  pdfauthor={Yuma Mizuno}
}
\usepackage[
  backend=biber,
  style=numeric-comp,
  maxbibnames=99,
  giveninits=true,
  isbn=false,
]{biblatex}

\DeclareFieldFormat*{title}{\mkbibemph{#1}}
\DeclareFieldFormat{journaltitle}{#1}
\DeclareFieldFormat{booktitle}{#1}
\renewbibmacro{in:}{\ifentrytype{article}{}{\printtext{\bibstring{in}\intitlepunct}}}
\DeclareMathOperator{\Spec}{Spec}
\DeclareMathOperator{\Conf}{Conf}
\DeclareMathOperator{\rot}{rot}
\DeclareMathOperator{\pos}{pos}
\DeclareMathOperator{\Nm}{Nm}
\DeclareMathOperator{\Ad}{Ad}
\DeclareMathOperator{\id}{id}
\DeclareMathOperator{\len}{len}
\newcommand{\Z}{\mathbb Z}
\newcommand{\Q}{\mathbb Q}
\newcommand{\C}{\mathbb C}
\newcommand{\Dcal}{\mathcal D}
\newcommand{\Aff}{\operatorname{Aff}}
\newcommand{\Bcal}{\mathcal B}
\newcommand{\BZ}{\mathrm BZ^\theta}

\title{Rotation fixed points of undecorated braid varieties\\ and fusion rings of affine Lie algebras}
\author{Yuma Mizuno}
\usepackage{titling}
\predate{}\postdate{}
\date{}
\makeatletter
\newcommand{\titlenote}[1]{\begingroup
  \renewcommand{\thefootnote}{}%
  \renewcommand{\@makefntext}[1]{\parindent=1em ##1}%
  \footnotetext{#1}\endgroup}
\makeatother

\begin{document}
\maketitle

\begin{abstract}
Let \(G\) be a simply connected semisimple algebraic group.
On the (undecorated) braid variety associated with a power of a Coxeter element,
there is a finite-order autoequivalence, called the Zamolodchikov transformation, induced by cyclic rotation of words.
In this paper, we show that the coarse moduli space of the fixed-point stack of this autoequivalence is isomorphic to the spectrum of the
fusion ring of the corresponding affine Lie algebra.
The above results also extend to twisted Coxeter elements, and the fusion rings of all affine Lie algebras
are obtained in this way.
\end{abstract}

\section{Introduction}
\label{sec:main}

\subsection{Background}
The integrable highest weight representations of level \(\ell\) of an untwisted affine Lie algebra
are basic components of the state spaces of the Wess--Zumino--Witten models of two-dimensional conformal field theory,
and \(\mathrm{SL}_2(\Z)\) acts on the space spanned by their normalized characters
\cite{KacPeterson1984}.
The fusion ring of level \(\ell\) is the commutative ring that has the isomorphism classes of these representations as a basis and the fusion rules of the conformal field theory as structure constants.
Verlinde found that the \(S\)-matrix of the action of \(\mathrm{SL}_2(\Z)\) diagonalizes the fusion rules \cite{Verlinde1988}.
Mathematically, the fusion rules are formulated as the dimensions of conformal blocks on the projective line with three marked points \cite{TsuchiyaUenoYamada1989},
and their diagonalization yields the Verlinde formula, which gives the dimensions of conformal blocks on general curves
\cite{Faltings1994,Teleman1995,Beauville1996,Teleman1996,Huang2008}.

The central charges of the conformal field theories associated with affine Lie algebras are related through the Rogers dilogarithm
to \(Y\)-systems, which are systems of difference equations of integrable models.
Zamolodchikov obtained \(Y\)-systems from the thermodynamic Bethe ansatz equations of scattering theories of type \(ADE\),
and conjectured the periodicity of their solutions \cite{Zamolodchikov1991}.
This conjecture was generalized to the periodicity of the solutions of the level restricted \(Y\)-system determined by each pair of an untwisted affine Lie algebra and a level \(\ell\) \cite{KunibaNakanishi1992,RavaniniTateoValleriani1993,KunibaNakanishiSuzuki1994}.
This periodicity was proved
using cluster algebras \cite{FominZelevinsky2002} and their additive categorification by triangulated categories \cite{Keller2013,IIKKN2013B,IIKKN2013CFG}.
The sum of the values of the Rogers dilogarithm at a constant solution of the level restricted \(Y\)-system consisting of positive real numbers
is expressed by the central charge of the parafermion conformal field theory of level \(\ell\) of the corresponding affine Lie algebra.
This dilogarithm identity was proved using the periodicity \cite{Nakanishi2011Dilog,IIKKN2013B,IIKKN2013CFG}.
More generally, a construction of constant solutions of the level restricted \(Y\)-system using character values of finite-dimensional representations
at torus elements determined by dominant integral weights of level \(\ell\) has been proposed \cite{KunibaNakanishi1992}.
It is conjectured that the sums of dilogarithms at these solutions, with logarithmic correction terms added,
are expressed by the central charge and the scaling dimensions of the parafermion conformal field theory \cite{KunibaNakanishi1992}.
From the relation between these constant solutions and the eigenvalues of the fusion matrices,
it was pointed out that the fusion ring is the algebraic structure underlying the dilogarithm identities \cite{NahmRecknagelTerhoeven1993}.

Today, the periodicity of \(Y\)-systems is interpreted geometrically as the periodicity of rotation automorphisms on braid varieties associated with powers of Coxeter elements.
A braid variety is a moduli space of sequences of flags whose relative positions change along a positive braid
\cite{ShendeTreumannZaslow2017,CasalsGorskyGorskySimental2024}.
Braid varieties form a class containing double Bruhat cells, double Bott--Samelson cells, and open Richardson varieties,
and they have cluster structures
\cite{CasalsEtAl2025,GalashinLamShermanBennettSpeyer2022,GalashinLamShermanBennett2026}.
Special cases of braid varieties had been introduced earlier by Shen--Weng under the name double Bott--Samelson cells,
and their cluster structures had been constructed \cite{ShenWeng2021}.
Shen--Weng further constructed a rotation automorphism of finite order on undecorated double Bott--Samelson cells,
and, since it gives the time evolution of the level restricted \(Y\)-systems of type \(ADE\) in cluster coordinates,
they gave a geometric proof of Zamolodchikov periodicity \cite{ShenWeng2021}.
In type \(A\), braid varieties appear as augmentation varieties of Legendrian links
\cite{CasalsGorskyGorskySimental2024,GaoShenWeng2024,AsplundEtAl2025},
and their cluster seeds are obtained from exact Lagrangian fillings
\cite{ShendeTreumannWilliamsZaslow2019,CasalsZaslow2022,CasalsWeng2024,GaoShenWeng2024,CasalsGao2024}.
In this case, the time evolution of the \(Y\)-system is the automorphism induced by the Kálmán loop of a Legendrian torus link \cite{GaoShenWeng2024},
and Kálmán had computed the order of the monodromy of this loop \cite{Kalman2005}.

In this paper, for the rotation automorphism on the undecorated braid variety associated with a power of a (twisted) Coxeter element,
we show that the coarse moduli space of the fixed-point stack is the spectrum of the fusion ring of an affine Lie algebra (Theorem~\ref{thm:main}).
When the affine Lie algebra is untwisted, this rotation automorphism gives the time evolution of the level restricted \(Y\)-system in cluster coordinates
(type \(ADE\) by Shen--Weng, type \(BCFG\) in Appendix~\ref{subsec:appendix-cluster}).
That is, the fusion ring expected to underlie the dilogarithm identities is realized as the fixed-point space of the rotation automorphism that gives the \(Y\)-system.

\subsection{Fixed-point sets and fusion rings: the case of type \texorpdfstring{\(A_1\)}{A1}}
\label{subsec:A1-intro}
Before stating the results in the general case, we explain the content of this paper in the case of type \(A_1\).
Let \(\ell\) be a natural number.
The cyclic configurations of \(\ell+2\) points of \(\mathbb P^1\) in which adjacent points are distinct
form an open subvariety \(C_{\ell+2}\) of \((\mathbb P^1)^{\ell+2}\). Its set of \(\C\)-points is
\[
 C_{\ell+2}(\C)=\{p=(p_j)_{j\in\Z/(\ell+2)\Z}\in(\mathbb P^1(\C))^{\ell+2}\mid p_j\ne p_{j+1}\}.
\]
We can represent each configuration by placing a point of \(\mathbb P^1(\C)\) at each vertex of a polygon with \(\ell+2\) vertices (Figure~\ref{fig:A1-cyclic}).

\begin{figure}[h]
 \centering
 \begin{tikzpicture}[scale=0.70]
  \foreach \j in {0,...,5} {
   \coordinate (v\j) at ({90-60*\j}:1.45);
   \fill (v\j) circle (1.6pt);
   \node at ({90-60*\j}:1.95) {\(p_\j\)};
  }
  \draw (v0)--(v1)--(v2)--(v3)--(v4)--(v5)--cycle;
  \draw[->] (35:1) arc[start angle=35,end angle=-35,radius=1];
  \node[font=\footnotesize] at (0,0) {rotation};
 \end{tikzpicture}
 \caption{A cyclic configuration with \(\ell=4\).}
 \label{fig:A1-cyclic}
\end{figure}

We write the quotient stack by the diagonal action of \(\mathrm{SL}_2\) as
\(\Conf^{\mathrm{cyc}}_{\ell+2}=[C_{\ell+2}/\mathrm{SL}_2]\).
Since the rotation \((p_j)_j\mapsto(p_{j-1})_j\) commutes with the diagonal action of \(\mathrm{SL}_2\),
it defines an autoequivalence \(\rot\) of \(\Conf^{\mathrm{cyc}}_{\ell+2}\).
We write \((\Conf^{\mathrm{cyc}}_{\ell+2})^{\rot}\) for the fixed-point stack of \(\rot\).
The set of isomorphism classes of its \(\C\)-points is described as follows:
\begin{equation}
 \begin{aligned}
 &(\Conf^{\mathrm{cyc}}_{\ell+2})^{\rot}(\C)/{\cong}\\
 &\quad=\{(p,\gamma)\in C_{\ell+2}(\C)\times\mathrm{SL}_2(\C)\mid
 \forall j\in\Z/(\ell+2)\Z,\ p_{j+1}=\gamma\cdot p_j\}
 /\mathrm{SL}_2(\C).
 \end{aligned}
 \label{eq:A1-fixed-points}
\end{equation}
Here \(g\in\mathrm{SL}_2(\C)\) acts from the left by
\(g\cdot(p,\gamma)=(g\cdot p,g\gamma g^{-1})\).

The fixed-point set \eqref{eq:A1-fixed-points} has the following explicit description.
Define polynomials \(P_j\in\Z[x]\) by
\(P_{-1}=0\), \(P_0=1\), \(P_{j+1}=xP_j-P_{j-1}\)
(in terms of the Chebyshev polynomials of the second kind \(U_j\), \(P_j(x)=U_j(x/2)\)).
Each element of the fixed-point set has a unique representative \((p,\gamma)\) such that
\[
 p_0=[1:0],\qquad \gamma=\begin{pmatrix}x&-1\\1&0\end{pmatrix}
\]
for some \(x\in\C\). In this case,
\[
 p_j=[P_j(x):P_{j-1}(x)]\qquad(0\le j\le\ell+1).
\]
The cyclic condition \(\gamma^{\ell+2}\cdot p_0=p_0\) is equivalent to \(P_{\ell+1}(x)=0\).
Therefore, via \(x=\operatorname{tr}\gamma\), the fixed-point set is in bijection with
the set \(\{2\cos(\pi a/(\ell+2))\mid 1\le a\le\ell+1\}\) of the \(\ell+1\) roots of \(P_{\ell+1}\).

If we write \(f_j\) for the function that assigns to an element \([(p,\gamma)]\) of the fixed-point set the trace of \(\gamma\) on the irreducible representation \(\mathrm{Sym}^j(\C^2)\) of \(\mathrm{SL}_2(\C)\),
then \(f_j=P_j(\operatorname{tr}\gamma)\).
In particular, the cyclic condition can be written as \(f_{\ell+1}=0\), and the following hold on the fixed-point set:
\[
 f_0=1,\qquad f_1f_j=f_{j-1}+f_{j+1}\quad(1\le j\le\ell),\qquad f_{\ell+1}=0.
\]
These are the relations of the fusion ring of type \(A_1^{(1)}\) at level \(\ell\).

Since \(P_j(2\cos\varphi)=\sin((j+1)\varphi)/\sin\varphi\), the value of \(f_j\) at the fixed point corresponding to \(a\) is
\[
 \frac{\sin((j+1)\pi a/(\ell+2))}{\sin(\pi a/(\ell+2))},
\]
which is the ratio \(S_{j,a-1}/S_{0,a-1}\) of entries of the \(S\)-matrix of type \(A_1^{(1)}\) at level \(\ell\) \cite[Example~13.8]{Kac1990}.

If we choose a triangulation of the polygon with \(\ell+2\) vertices, the cross-ratios of the four points around each diagonal form cluster coordinates.
In particular, the cross-ratios give solutions of the \(Y\)-system, and the rotation corresponds to its time evolution \cite{GliozziTateo1996}.
We add one remark.
If the points at the two ends of a diagonal coincide, the corresponding cross-ratio is either undefined or equal to \(-1\),
and no solution of the \(Y\)-system is obtained.
When \(\ell\ge2\) is even, the fixed point corresponding to \(a=(\ell+2)/2\) satisfies \(p_j=p_{j+2}\).
Since every triangulation contains, for some \(j\), the diagonal joining the vertices \(j-1\) and \(j+1\),
no constant solution of the \(Y\)-system is obtained from this fixed point, whichever triangulation is used (Figure~\ref{fig:A1-triangulations}).
Thus there is a rotation fixed point that cannot be described by a constant solution of the \(Y\)-system.
\begin{figure}[htbp]
 \centering
 \begin{tikzpicture}[scale=0.62]
  \foreach \xo in {0,5,10} {
   \foreach \v [count=\c from 0] in {\infty,0,\infty,0,\infty,0} {
    \coordinate (p\xo-\c) at ({\xo+1.45*cos(90-60*\c)},{1.45*sin(90-60*\c)});
    \fill (p\xo-\c) circle (1.6pt);
    \node at ({\xo+1.95*cos(90-60*\c)},{1.95*sin(90-60*\c)}) {\(\v\)};
   }
   \draw (p\xo-0)--(p\xo-1)--(p\xo-2)--(p\xo-3)--(p\xo-4)--(p\xo-5)--cycle;
  }
  \draw (p0-0)--(p0-3);
  \draw[dashed] (p0-0)--(p0-2) (p0-0)--(p0-4);
  \draw (p5-2)--(p5-5);
  \draw[dashed] (p5-0)--(p5-2) (p5-5)--(p5-3);
  \draw[dashed] (p10-0)--(p10-2)--(p10-4)--cycle;
 \end{tikzpicture}
 \caption{Triangulations at the fixed point with \(\ell=4\), \(a=3\). Dashed lines indicate diagonals at whose two ends the points are equal.}
 \label{fig:A1-triangulations}
\end{figure}

\subsection{Main results}

In what follows, let \(K\) be a field of characteristic \(0\).
Let \(G\) be a split simply connected semisimple group over \(K\), and assume that its Dynkin diagram is connected.
Fix a pinning \((B,T,\{x_i\}_{i\in I})\) of \(G\).
Here \(B\) is a Borel subgroup, \(T\subset B\) is a split maximal torus, \(I\) is the index set of simple roots, and
\(x_i\colon\mathbb G_a\to B\) is an isomorphism onto the root subgroup of the \(i\)-th simple root.
Let \(\theta\) be an automorphism of \(G\) preserving this pinning.
The automorphism \(\theta\) defines an automorphism \(\theta:I\to I\) of the Dynkin diagram of \(G\) by \(\theta(x_i(a))=x_{\theta(i)}(a)\).
Set \(d=\operatorname{ord}(\theta)\), and write \(Z^\theta\) for the \(\theta\)-fixed subgroup of \(Z(G)\) and \(\BZ\) for its classifying stack.
Let \(W=N_G(T)/T\) be the Weyl group, and write \(s_i\) for the simple reflection corresponding to \(i\in I\).
We call an element of \(W\) obtained by choosing one simple reflection from each \(\theta\)-orbit and multiplying them in an arbitrary order
a \emph{\(\theta\)-Coxeter element}.
We write \(\operatorname{Br}^+\) for the \emph{positive braid monoid} obtained by imposing the braid relations on words in \(I\).
The positive braid represented by a reduced word of \(w\in W\) does not depend on the choice of the reduced word, and we also denote it by \(w\).
The automorphism \(\theta\) acts on each letter of a word and defines an automorphism of \(\operatorname{Br}^+\).
Let \(\Delta\) be the positive braid of the longest element \(w_0\in W\).
For \(\ell\in\Z_{\ge0}\) and a \(\theta\)-Coxeter element \(c\), set
\[
 \beta=c\,\theta(c)\cdots\theta^{d-1}(c).
\]
The braid variety \(X(\beta^\ell\Delta)\) is the variety of sequences of flags that start at \(B\), end at \(w_0B\), and
have relative positions along the word \(\beta^\ell\Delta\)
\cite[\S3.3]{CasalsEtAl2025}.
The torus \(T\) acts on the sequences of flags by left multiplication on all flags.
Define \(i^*\) by \(w_0s_iw_0=s_{i^*}\), and set \((i_1\cdots i_m)^*=i_1^*\cdots i_m^*\) for a word \(i_1\cdots i_m\).
We call the transformation \(i_1i_2\cdots i_m\mapsto i_m^*i_1\cdots i_{m-1}\) of words
the rightward \emph{cyclic rotation}.
When the Demazure products of the words before and after the transformation are both \(w_0\), this operation induces an isomorphism
\[
 X(i_1i_2\cdots i_m)\xrightarrow{\sim}X(i_m^*i_1i_2\cdots i_{m-1})
\]
\cite[Lemma~3.10]{CasalsEtAl2025}.
If we extend a reduced word of \(c\) and write \(\Delta=ca=ac^*\), then the cyclic rotations give an automorphism
\[
 \rot:X(\beta^\ell\Delta)\xrightarrow{\theta}X(\theta(\beta)^\ell\Delta)
 =X(\theta(\beta)^\ell ac^*)
 \xrightarrow{\sim}X(c\theta(\beta)^\ell a)
 =X(\beta^\ell ca)=X(\beta^\ell\Delta).
\]
Since this \(\rot\) transforms the action of \(T\) by \(c\theta\),
it induces an autoequivalence of the quotient stack \([X(\beta^\ell\Delta)/T]\).
We also denote it by \(\rot\).
As shown in Appendix~\ref{app:rotation}, this rotation gives the time evolution of Zamolodchikov's \(Y\)-system through the cluster structure and is therefore also called the Zamolodchikov transformation \cite{ShenWeng2021,Keller2013}.
Setting \(h^\vee=\operatorname{ord}(c\theta)/d\), we have \(\rot^{d(\ell+h^\vee)}\cong\id\) as autoequivalences of \([X(\beta^\ell\Delta)/T]\) (Corollary~\ref{cor:appendix-periodicity}).
The fixed-point stack \([X(\beta^\ell\Delta)/T]^{\rot}\) of this autoequivalence consists of
pairs of an object \(\xi\) of \([X(\beta^\ell\Delta)/T]\) and an isomorphism \(\rot\xi\xrightarrow{\sim}\xi\)
(Definition~\ref{def:fixed}).

For a connected Dynkin diagram \(\Dcal\) and an automorphism \(\theta\) of it,
the \(\theta\)-twisted loop algebra of the complex simple Lie algebra with Dynkin diagram \(\Dcal\)
gives a realization of an affine Lie algebra \cite[Theorems~7.4, 8.3]{Kac1990}.
We write \(\Aff_\theta(\Dcal)\) for its affine Dynkin diagram.
The vertex set of \(\Aff_\theta(\Dcal)\) is obtained by adding a vertex \(0\) to the set of orbits of the vertices of \(\Dcal\) under \(\theta\)
\cite[\S7.4, \S8.3]{Kac1990}.
We denote the dual of a Dynkin diagram by \({}^\vee\), and we also write \(\theta\) for the induced automorphism of the dual diagram \(\Dcal^\vee\).
For an affine Dynkin diagram \(\mathsf A\) and \(\ell\in\Z_{\ge0}\),
we write \(P_\ell(\mathsf A)\) for the set of dominant integral weights of level \(\ell\) (Definition~\ref{def:level-weights}).
The fusion ring \(\mathsf R_\ell(\mathsf A)\) of level \(\ell\) of the affine Lie algebra associated with \(\mathsf A\) is
the commutative ring with \(\Z\)-basis the functions \(\chi_\mu\) obtained by restricting the Weyl characters with highest weight \(\mu\in P_\ell(\mathsf A)\) to a finite subset of the torus
(\cite[\S2]{Hong2018}, Definition~\ref{def:fusion}).
Since \(\mathsf R_\ell(\mathsf A)\otimes_\Z\C\) is isomorphic to the ring of functions on the quotient of this finite set by the Weyl group (Section~\ref{subsec:fusion-def}),
\(\mathsf R_\ell(\mathsf A)\otimes_\Z K\) is a finite étale \(K\)-algebra.

\begin{maintheorem}[{proved in Section~\ref{subsec:fusion-comparison}}]
\label{thm:main}
Let \(\Dcal\) be the Dynkin diagram of \(G\).
For every \(\ell\in\Z_{\ge0}\) and every \(\theta\)-Coxeter element \(c\), there is an equivalence
\[
 [X(\beta^\ell\Delta)/T]^{\rot}
 \simeq \BZ\times_K
 \Spec_K\bigl(\mathsf R_\ell(\Aff_\theta(\Dcal^\vee)^\vee)\otimes_\Z K\bigr).
\]
\end{maintheorem}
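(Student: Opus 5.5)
We generalize the $A_1$ computation of \S\ref{subsec:A1-intro}. A point of the fixed-point stack is a sequence of flags $(B_0=B,B_1,\dots,B_m=w_0B)$ along $\beta^\ell\Delta$, taken modulo $T$, together with an isomorphism $\rot\xi\xrightarrow{\sim}\xi$. Concretely, such an isomorphism is an element $t\in T$ identifying the rotated sequence with the original one up to the twisted $T$-action.

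The first step is to unfold the rotation. The cyclic rotation of words removes the last letter and reinserts its $*$-conjugate at the front. This can be realized by extending the flag sequence periodically. After composing with $\theta$ and the $T$-datum, the fixed-point condition then says that there is a group element $\gamma\in G$ such that
\[
B_{j+|c|}=\gamma\,\theta(B_j)
\]
for all $j$ in an infinite periodic extension of the sequence.

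The second step is a normal form. The first $|c|$ steps, which go from $B$ along $c$, can be normalized by the $B$-action, as in the $A_1$ case where $p_0=[1:0]$. This forces $\gamma\theta$ to lie in a Steinberg-type cross-section $\dot c\,\prod_{i}x_i(a_i)\,\theta$, with one coordinate $a_i$ for each $\theta$-orbit in $I$. Steinberg's theorem (in its twisted version) identifies this cross-section with the twisted adjoint quotient $G\theta/\!/G$. The twisted characters of the $\theta$-stable irreducible representations give coordinates on it, and these coordinates are polynomial in the $a_i$. The automorphism group of a normalized object is the stabilizer of $\gamma$ in the cross-section, intersected with what survives of $T$. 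We expect this group to be $Z^\theta$, which accounts for the factor $\BZ$.

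The third step is to translate the closing condition. Requiring that the sequence, after $\ell$ further copies of $\beta$ followed by $\Delta$, ends at $w_0B$ and closes up cyclically amounts to a condition of the form $(\gamma\theta)^{d(\ell+h^\vee)}$ lying in a certain central coset. More precisely, it is the condition that specific matrix coefficients (generalized minors) of powers of $\gamma\theta$ vanish, mirroring $P_{\ell+1}=0$ in the $A_1$ case. On semisimple elements, the resulting condition is that $\gamma\theta$ is conjugate to $t\theta$ with $t$ in the finite set of torus elements $\exp\!\big(2\pi i(\mu+\rho)/(\ell+h^\vee)\big)$ used to define the fusion ring, up to the twisted Weyl group. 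This finite set is the one in Definition~\ref{def:fusion} for $\Aff_\theta(\Dcal^\vee)^\vee$, after matching the folded root data and Langlands duality.

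The main obstacle is the third step. We must show that the fixed locus is reduced and consists exactly of these points, and also that every $\gamma$ in the fixed locus is regular semisimple so that no nonsemisimple solutions survive. For reducedness we would argue that the coordinate ring of the fixed locus is generated by the twisted characters $\chi_\mu$ subject to the Jacobi--Trudi/Weyl-type relations forced by the closing condition. We would then show that these relations are exactly the defining ideal of $\mathsf R_\ell$, namely that $\chi_\mu$ for $\mu$ outside the level-$\ell$ alcove reduces by the affine Weyl reflection rule. Since $\mathsf R_\ell\otimes K$ is étale (Section~\ref{subsec:fusion-def}), a surjection from $\mathsf R_\ell\otimes K$ onto the coordinate ring, combined with a count of geometric points showing that both sides have $|P_\ell|$ points, yields the isomorphism. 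The comparison of the coarse space with $\Spec_K$ then follows, and the gerbe over it is trivial because the normal form provides a section.
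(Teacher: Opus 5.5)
\textbf{Main gap: reducedness of the fixed locus.} Your set-theoretic outline follows the paper's route. You normalize \(F_0=B\), move \(\gamma\) into the twisted Steinberg cross-section, and observe that \(\gamma^N\in Z^\theta\) forces finite order, hence semisimplicity and conjugacy to some \(t\theta\), with regularity supplied by \(\Sigma_c\). But the step you yourself flag as the main obstacle, reducedness of the fixed locus, is never proved, and the strategy you sketch for it is circular. A ring map \(\mathsf R_\ell\otimes_\Z K\to\mathcal O(\Sigma^{\mathrm{per}}_{c,N})\) sending \(\chi_\mu\) to the twining character exists only if every fusion relation lies in the ideal cut out by the closing condition \(\gamma^N\in Z^\theta\). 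Since \(\mathsf R_\ell\otimes_\Z K\) is étale, every quotient of it is reduced. So the existence of your surjection already contains the reducedness you want, and the point count does not help produce it. You have only moved the difficulty into the claim that the ideal generated by the matrix coefficients of \(\gamma^N\) equals the fusion ideal, and you give no argument for that. The affine Weyl reflection rule is an identity of functions on the finite set of points; it says nothing about the possibly non-reduced scheme cut out by \(\gamma^N\in Z^\theta\). Proving the ideal equality directly would amount to an explicit presentation of \(\mathsf R_\ell\) by generators and relations in every twisted type.

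\textbf{The missing idea, and two unjustified steps.} The paper instead uses a tangent-space computation, which makes any presentation of the fusion ideal unnecessary. At a geometric point \(\gamma\) of \(\Sigma^{\mathrm{per}}_{c,N}\) one has \(\Ad(\gamma)^N=1\). In characteristic \(0\) the kernel of \(d(\gamma\mapsto\gamma^N)=\sum_{j=0}^{N-1}\Ad(\gamma)^j\) is therefore \(\operatorname{im}(\Ad(\gamma)-1)=T_\gamma(G\cdot\gamma)\). Since \(Z^\theta\) is étale and \(T_\gamma(G\theta)=T_\gamma(G\cdot\gamma)\oplus T_\gamma\Sigma_c\) (from \(\Sigma_c\cong G\theta/\!/G\) and regularity), \(\Sigma^{\mathrm{per}}_{c,N}\) has zero tangent spaces and is finite étale. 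Its image and that of \(T^{\mathrm{reg}}_{\theta,\ell}/W^\theta\) are then reduced closed subschemes of \(G\theta/\!/G\) with the same geometric points, hence equal. Hong's theorem, after identifying the translation lattice with \(\Nm_\theta(Q)\), identifies the latter with \(\Spec_K(\mathsf R_\ell\otimes_\Z K)\).

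Two further steps in your proposal are asserted rather than derived. First, the exponent \(d(\ell+h^\vee)\) requires the identity \(\beta^{h^\vee}=\Delta^2\) in \(\operatorname{Br}^+\). This is what lets you replace the two \(w_0\)-edges of the braid variety by \(dh^\vee\) further \(\gamma\)-steps and turn \(\rot\) into a genuine cyclic shift; \emph{extending the sequence periodically} does not give this by itself. Second, the stabilizer \(Z^\theta\), which you only expect, follows from the He--Lusztig isomorphism \(U\times\Sigma_c\cong U\dot cU\theta\) combined with \(T^{c\theta}=Z^\theta\). Together these show that the centralizer in \(B\) of \(\gamma\in\Sigma_c\) is exactly \(Z^\theta\).
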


In particular, the stabilizer group of each geometric point of the fixed-point stack is \(Z^\theta\), and the coarse moduli space is
\[
 \Spec_K\bigl(\mathsf R_\ell(\Aff_\theta(\Dcal^\vee)^\vee)\otimes_\Z K\bigr).
\]

The number \(h^\vee\) is equal to the dual Coxeter number of \(\Aff_\theta(\Dcal^\vee)^\vee\) (Lemma~\ref{lem:appendix-full-twist}).
Table~\ref{tab:types} shows, for each type of \(\Dcal\) and \(\theta\), \(\Aff_\theta(\Dcal^\vee)^\vee\), its dual Kac labels \(a^\vee\) (Section~\ref{sec:fusion}),
\(h^\vee\), and \(Z^\theta\).

\begin{table}[t]
\centering
\renewcommand{\arraystretch}{1.15}
\begin{tabular}{@{}llllll@{}}
\toprule
\(\Dcal\) & \(\theta\) & \(\Aff_\theta(\Dcal^\vee)^\vee\) & \(h^\vee\) & \(a^\vee\) & \(Z^\theta\) \\
\midrule
\(A_r\) & \(\id\) & \(A_r^{(1)}\) & \(r+1\) & \(1^{r+1}\) & \(\mu_{r+1}\) \\
\(B_r\) & \(\id\) & \(D_{r+1}^{(2)}\) & \(2r\) & \(1^2,2^{r-1}\) & \(\mu_2\) \\
\(C_r\) & \(\id\) & \(A_{2r-1}^{(2)}\) & \(2r\) & \(1^2,2^{r-1}\) & \(\mu_2\) \\
\(D_r\) & \(\id\) & \(D_r^{(1)}\) & \(2r-2\) & \(1^4,2^{r-3}\) & \(\mu_2^2,\mu_4\) \\
\(E_6\) & \(\id\) & \(E_6^{(1)}\) & \(12\) & \(1^3,2^3,3\) & \(\mu_3\) \\
\(E_7\) & \(\id\) & \(E_7^{(1)}\) & \(18\) & \(1^2,2^3,3^2,4\) & \(\mu_2\) \\
\(E_8\) & \(\id\) & \(E_8^{(1)}\) & \(30\) & \(1,2^2,3^2,4^2,5,6\) & \(1\) \\
\(F_4\) & \(\id\) & \(E_6^{(2)}\) & \(12\) & \(1,2^2,3,4\) & \(1\) \\
\(G_2\) & \(\id\) & \(D_4^{(3)}\) & \(6\) & \(1,2,3\) & \(1\) \\
\midrule
\(A_{2r-1}\) & order \(2\) & \(B_r^{(1)}\) & \(2r-1\) & \(1^3,2^{r-2}\) & \(\mu_2\) \\
\(A_{2r}\) & order \(2\) & \(A_{2r}^{(2)}\) & \(2r+1\) & \(1,2^r\) & \(1\) \\
\(D_{r+1}\) & order \(2\) & \(C_r^{(1)}\) & \(r+1\) & \(1^{r+1}\) & \(\mu_2\) \\
\(E_6\) & order \(2\) & \(F_4^{(1)}\) & \(9\) & \(1^2,2^2,3\) & \(1\) \\
\(D_4\) & order \(3\) & \(G_2^{(1)}\) & \(4\) & \(1^2,2\) & \(1\) \\
\bottomrule
\end{tabular}
\caption{Data for each type (for the notation of affine types, see \cite[Tables~Aff~1--3]{Kac1990}).
The exponents in \(a^\vee\) denote multiplicities.
For \(D_r\), \(Z^\theta\) is \(\mu_2^2\) if \(r\) is even, and \(\mu_4\) if it is odd.}
\label{tab:types}
\end{table}

When \(K=\C\), we describe the set of isomorphism classes of \(\C\)-points of \([X(\beta^\ell\Delta)/T]^{\rot}\).
The stack \([X(\beta^\ell\Delta)/T]\) can be identified, by an equivalence compatible with the rotations, with the stack of cyclic flag configurations
\(\Conf^{\mathrm{cyc}}_{c,d(\ell+h^\vee)}\) (Lemma~\ref{lem:appendix-cut-rotation}).
In this presentation, the geometric points of the fixed-point stack are represented by pairs \((F,g)\) of a cyclic flag configuration \(F\) and
an element \(g\in G\) satisfying \(F=g\cdot\rot(F)\) (Section~\ref{sec:fixed}).
We regard the weights of the finite parts of \(\Aff_\theta(\Dcal^\vee)^\vee\) and \(\Aff_\theta(\Dcal)^\vee\)
as \(\theta\)-invariant weights and coweights of \(G\), respectively (Section~\ref{sec:fusion}).
For a \(\theta\)-invariant dominant weight \(\mu\), the irreducible representation \(V_\mu\) of \(G\) with highest weight \(\mu\)
extends to a representation of \(G\rtimes\langle\theta\rangle\) in which \(\theta\) fixes a highest weight vector (Section~\ref{sec:fusion}).
The rows and columns of the \(S\)-matrix \((S_{\mu\lambda})\) of level \(\ell\) \cite[Theorems~13.8, 13.9]{Kac1990} of the affine Lie algebra associated with \(\Aff_\theta(\Dcal^\vee)^\vee\)
are indexed by \(P_\ell(\Aff_\theta(\Dcal^\vee)^\vee)\) and \(P_\ell(\Aff_\theta(\Dcal)^\vee)\), respectively.

\begin{maincorollary}[{proved in Section~\ref{subsec:twining}}]
\label{cor:points}
Let \(K=\C\).
We write \(X_*(T)\) for the cocharacter lattice of \(T\), and let \(\rho^\vee\) be the sum of the fundamental coweights of \(G\).
For \(\lambda\in P_\ell(\Aff_\theta(\Dcal)^\vee)\), using the exponential map \(\exp:X_*(T)\otimes_\Z\C\to T(\C)\), set
\[
 t_\lambda=\exp\bigl(-2\pi\sqrt{-1}\,(\lambda+\rho^\vee)/d(\ell+h^\vee)\bigr)\in T(\C).
\]
\begin{enumerate}
\item There exists, up to isomorphism, exactly one object \((F,g)\) such that \(g\theta\) is \(G(\C)\)-conjugate to \(t_\lambda\theta\).
If we write \([t_\lambda\theta]\) for its isomorphism class, then the map
\[
 P_\ell(\Aff_\theta(\Dcal)^\vee)\longrightarrow
 [X(\beta^\ell\Delta)/T]^{\rot}(\C)/{\cong},\qquad
 \lambda\longmapsto[t_\lambda\theta]
\]
is bijective.
\item Under the equivalence of the main theorem, the basis element \(\chi_\mu\) for \(\mu\in P_\ell(\Aff_\theta(\Dcal^\vee)^\vee)\)
corresponds to the function \((F,g)\mapsto\operatorname{tr}(g\theta\mid V_\mu)\).
\item The value of this function at \([t_\lambda\theta]\) is \(S_{\mu\lambda}/S_{0\lambda}\).
\end{enumerate}
\end{maincorollary}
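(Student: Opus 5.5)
We prove the three statements by transporting everything through the equivalence of Theorem~\ref{thm:main} and the description of the fixed-point stack given in Section~\ref{sec:fixed}. By Lemma~\ref{lem:appendix-cut-rotation}, $[X(\beta^\ell\Delta)/T]$ is identified, compatibly with rotations, with $\Conf^{\mathrm{cyc}}_{c,d(\ell+h^\vee)}$. So a $\C$-point of the fixed-point stack is a pair $(F,g)$ with $F=g\cdot\rot(F)$, taken up to the $G(\C)$-action. The key invariant attached to such a pair is the $G(\C)$-conjugacy class of $g\theta$ in $G\rtimes\langle\theta\rangle$.

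The plan for (2) is to show that the functions $\operatorname{tr}(g\theta\mid V_\mu)$ match the basis $\chi_\mu$. First check that these functions are well defined on isomorphism classes: conjugation invariance is automatic, and the stabilizer $Z^\theta$ of Theorem~\ref{thm:main} acts trivially on the extended representation $V_\mu$. Then show they are the functions used in constructing the equivalence of Theorem~\ref{thm:main}, i.e.\ that the comparison in Section~\ref{subsec:fusion-comparison} sends $\chi_\mu$ to the twining character of $g\theta$. I expect the equivalence to be built precisely by evaluating twining characters. If instead it goes through Steinberg-type coordinates, I would compare the two on the $\theta$-twisted Steinberg cross-section and use that twining characters of fundamental representations generate the $\theta$-twisted class functions.

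For (1), use the $A_1$ computation as a model. The relation $F=g\cdot\rot(F)$ iterated $d(\ell+h^\vee)$ times, combined with the periodicity $\rot^{d(\ell+h^\vee)}\cong\id$ of Corollary~\ref{cor:appendix-periodicity}, forces $(g\theta)^{d(\ell+h^\vee)}$ to act by a fixed central element. I would argue that the first flag together with $g$ determines $F$ up to isomorphism. Then show that $g\theta$ must be regular semisimple, hence conjugate to $t\theta$ for some $t\in T^\theta$. The order condition then places $t$, up to twisted Weyl conjugacy, among the elements $t_\lambda$ with $\lambda+\rho^\vee$ strictly inside the scaled alcove. Here I would use the standard classification of twisted conjugacy classes of finite order, via Kac coordinates relative to $\Aff_\theta(\Dcal)^\vee$.

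Bijectivity then follows by counting. The number of points of $\Spec(\mathsf R_\ell\otimes\C)$ is $|P_\ell(\Aff_\theta(\Dcal^\vee)^\vee)|$, which equals $|P_\ell(\Aff_\theta(\Dcal)^\vee)|$ because the $S$-matrix is square and invertible. Injectivity of $\lambda\mapsto[t_\lambda\theta]$ follows from (2), since twining characters separate distinct regular twisted conjugacy classes. Uniqueness of the object with a given class then follows from Theorem~\ref{thm:main}, because each geometric point of the fixed-point stack has the single automorphism group $Z^\theta$.

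For (3), evaluate $\operatorname{tr}(t_\lambda\theta\mid V_\mu)$ with the twisted Weyl character formula (Jantzen's formula). This expresses it as a ratio of alternating sums over $W^\theta$ evaluated at $\exp(-2\pi\sqrt{-1}(\lambda+\rho^\vee)/d(\ell+h^\vee))$. Then compare with Kac--Peterson's formula \cite[Theorems~13.8, 13.9]{Kac1990}, in which $S_{\mu\lambda}$ is a constant times $\sum_{w}\varepsilon(w)\exp(-2\pi i(w(\mu+\rho),\lambda+\rho^\vee)/(\ell+h^\vee)\cdot\text{normalization})$. The constant cancels in the ratio $S_{\mu\lambda}/S_{0\lambda}$.

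The main obstacle is matching normalizations in this last step. One must match the factor $d$ and the bilinear form with the affine Lie algebra of type $\Aff_\theta(\Dcal^\vee)^\vee$, and identify $W^\theta$ with the finite Weyl group of that affine diagram. I would first verify the matching in types $A_{2r}^{(2)}$ and $D_4^{(3)}$, then argue uniformly with the $\theta$-folded root datum.
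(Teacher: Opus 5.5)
Your plans for (2) and (3) match the paper. Part (3) is exactly \(\chi_\mu(\tau_\lambda)=A_{\mu+\rho}(\tau_\lambda)/A_\rho(\tau_\lambda)=S_{\mu\lambda}/S_{0\lambda}\) via Kac's formula. Part (2) needs two corrections.

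First, the equivalence of Theorem~\ref{thm:main} is not built from twining characters. It is built from the Weyl characters of \(\Psi\) on \(T^{\mathrm{reg}}_{\theta,\ell}\), pulled back along \(\iota^{-1}\circ\pi\) (Propositions~\ref{prop:norm} and~\ref{prop:descent}). The bridge is Jantzen's identity \(\operatorname{tr}(t\theta\mid V_\mu)=\chi_\mu(t)\), which belongs in (2), not only in (3). Your fallback (twining characters of fundamental representations generate the class functions) cannot match the individual basis elements \(\chi_\mu\).

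Second, \(Z^\theta\) does not act trivially on \(V_\mu\): \(-1\in\mathrm{SL}_2\) acts by \(-1\) on \(\C^2\). This is harmless, since isomorphisms act on \(g\theta\) by conjugation and automorphisms in \(Z^\theta\) fix \(g\).

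The genuine gaps are in (1), where several key steps are not justified by what you cite.

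\emph{Centrality and regularity.} Iterating \(F=g\cdot\rot(F)\) and using \(\rot^N\cong\id\) only shows that \((g\theta)^N\) stabilizes every \(F_k\). Such stabilizers need not be central: in your \(A_1\) model with \(\ell\) even, \(p=(\infty,0,\infty,0,\dots)\) is stabilized by the whole diagonal torus. Centrality and regularity of \(g\theta\) come instead from Proposition~\ref{prop:cut} and Lemma~\ref{lem:quotient-section}. The condition \(\pos(F_0,F_1)=c\) lets you conjugate \(g\theta\) into \(\Sigma_c\) (He--Lusztig). Elements of \(\Sigma_c\) are regular and have centralizer \(Z^\theta\) in \(B\), so \((g\theta)^N\in B\), which commutes with \(g\theta\), lies in \(Z^\theta\). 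Your plan never invokes this.

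\emph{Uniqueness.} Knowing that the automorphism groups are \(Z^\theta\) says nothing about whether two objects with conjugate \(g\theta\) are isomorphic. You need that the isomorphism class is determined by \(\pi(g\theta)\). Within your framework this follows from (2) plus the theorem: the \(\chi_\mu\) separate the points of \(\Spec\), they are class functions of \(g\theta\), and \(\BZ\times_K\Spec\) has one isomorphism class over each point.

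\emph{Injectivity.} You do not need separation by twining characters here, since non-conjugate \(g\theta\) already give non-isomorphic objects by the definition of morphisms. What you need is that \(t_\lambda\theta\) and \(t_{\lambda'}\theta\) are not conjugate for \(\lambda\neq\lambda'\).

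\emph{Kac coordinates.} These classify automorphisms of \(\mathfrak g\) up to \(\mathrm{Aut}(\mathfrak g)\)-conjugacy. To get \(G\)-classes in \(G\theta\) for simply connected \(G\), you must still account for central translates and the symmetries of the affine diagram.

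The paper sidesteps all of this. By Propositions~\ref{prop:cut} and~\ref{prop:norm}, the coarse moduli map is \([(F,g)]\mapsto\iota^{-1}(\pi(g\theta))\), a bijection onto \((T^{\mathrm{reg}}_{\theta,\ell}/W^\theta)(\C)\). Lemma~\ref{lem:duality} then identifies that set with \(P_\ell(\Aff_\theta(\Dcal)^\vee)\) by a direct lattice computation: \(T_{\theta,\ell}(\C)\cong(P^\vee)^\theta/(\ell+h^\vee)\Nm_\theta(Q^\vee)\), together with the interior of the scaled fundamental alcove. This gives existence, uniqueness and injectivity at once, and makes your counting via squareness of the \(S\)-matrix unnecessary.
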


We outline the proofs of Theorem~\ref{thm:main} and Corollary~\ref{cor:points}.
A fixed point \((F,g)\) is determined by the conjugacy class of \(g\theta\), and by a result of He--Lusztig \cite{HeLusztig2012} (of Steinberg \cite{Steinberg1965} when \(\theta=\id\)), \(g\theta\) is represented by an element of the Steinberg cross-section.
Via the cross-section, the regular conjugacy classes correspond bijectively to the points of the quotient of the torus by \(W^\theta\), and the condition for the flag configuration to be closed becomes the condition defining a finite subgroup of the torus.
The set of its solutions is the spectrum of the fusion ring and is indexed by the lattice points in the fundamental alcove of the affine Weyl group.
The basis of the fusion ring corresponds to twining characters, and their values at each fixed point are ratios of entries of the \(S\)-matrix.
The following diagram shows how the proof proceeds.

\medskip
{\centering
\begin{tikzpicture}[x=1cm,y=1cm,>=Stealth,
 every node/.style={font=\footnotesize},
 stage/.style={draw,rounded corners=2pt,align=center,inner sep=3pt},
 above label/.style={above,font=\scriptsize},
 below label/.style={below,font=\scriptsize}]
\node[stage] (fix) at (0,0) {Fixed points on\\the braid variety\\\([X(\beta^\ell\Delta)/T]^{\rot}\)};
\node[stage] (conf) at (5.2,0) {Fixed points of cyclic\\flag configurations\\\((\Conf^{\mathrm{cyc}}_{c,d(\ell+h^\vee)})^{\rot}\)};
\node[stage] (slice) at (10.9,0) {Periodic points on\\the cross-section\\\(\BZ\times_K\Sigma_{c,d(\ell+h^\vee)}^{\mathrm{per}}\)};
\node[stage] (torus) at (0,-2.1) {Finite subgroup\\of the torus\\\(\BZ\times_K T_{\theta,\ell}^{\mathrm{reg}}/W^\theta\)};
\node[stage] (fusion) at (5.2,-2.1) {Spectrum of the fusion ring\\\(\BZ\times_K\Spec_K(\mathsf R_\ell\otimes_\Z K)\)};
\node[stage] (points) at (10.9,-2.1) {Parametrization of fixed points\\\(([t_\lambda\theta])_{\lambda\in P_\ell(\Aff_\theta(\Dcal)^\vee)}\)};
\draw[->] (fix) -- node[above label] {Lemma~\ref{lem:appendix-cut-rotation}} node[below label] {\(\beta^{h^\vee}=\Delta^2\)} (conf);
\draw[->] (conf) -- node[above label] {Proposition~\ref{prop:cut}} node[below label] {He--Lusztig} (slice);
\draw[->] (slice.south) -- ++(0,-0.45) -| node[pos=0.25,below label] {Proposition~\ref{prop:norm} (Steinberg, Mohrdieck)} (torus.north);
\draw[->] (torus) -- node[above label] {Proposition~\ref{prop:descent}} node[below label] {\shortstack{Weyl\\characters}} (fusion);
\draw[->] (torus.south) -- (0,-2.95) -- node[below label] {Lemma~\ref{lem:duality} (fundamental alcove of the affine Weyl group)} (10.9,-2.95) -- (points.south);
\end{tikzpicture}\par}
\medskip

Finally, we discuss the geometric background of the fixed-point stacks considered in this paper and directions for future research.
The motivation for considering fixed-point stacks comes from their relation to local systems on three-manifolds.
The fixed-point stack of the action induced by a surface homeomorphism on the moduli stack of local systems is equivalent to the moduli stack of local systems on its mapping torus.
Local systems on three-manifolds describe classical solutions of Chern--Simons theory, and the corresponding perturbative expansions appear in the study of quantum invariants and quantum modularity \cite{DimofteGukovLenellsZagier2009,GaroufalidisZagier2024}.
The cyclic flag configurations in this paper can be interpreted as local systems on a disk equipped with boundary flags satisfying prescribed relative position conditions \cite{FockGoncharov2006,GoncharovShen2019}.
Each rotation-fixed object gives rise, after passage to the adjoint group, to a framed local system on the lens space \(L\bigl(d(\ell+h^\vee),1\bigr)\).
Further study of the relation between this description and quantum invariants, and its extension to more general three-manifolds, are left for future work.

\paragraph*{Acknowledgments.}
The author had the idea for this paper while attending the
Workshop on Higher Teichmüller theory, knot theory and cluster algebras (July 13--17, 2026) held at University College Cork.
The author thanks the organizers Dani Kaufman and Robert Osburn, and the participants who had discussions with the author.
The author was supported by the Irish Research Council Advanced Laureate Award IRCLA/\allowbreak 2023/\allowbreak 1934 held by Robert Osburn.

\paragraph*{Use of AI.}
The author used ChatGPT--5.6 Sol in the research and in the preparation of this paper.
It was used for the refinement of proofs under the author's direction, checking of computations, literature search, and LaTeX editing.
The author takes responsibility for the mathematical results, proofs, citations, and the final text.

\section{Presentation of the fixed-point stack by the cross-section}
\label{sec:fixed}

We write \(U\) for the unipotent radical of \(B\).
Let \(\Phi\) be the root system of \((G,T)\) and \(\alpha_i\) the simple root corresponding to \(x_i\).
Since \(\theta\) preserves \(T\) and \(B\), it acts on \(\Phi\) and \(W\),
and satisfies \(\theta(\alpha_i)=\alpha_{\theta(i)}\) and \(\theta(s_i)=s_{\theta(i)}\).
Set \(I_\theta=I/\langle\theta\rangle\) and \(r=|I_\theta|\).
We also write \(\theta\) for the element \((1,\theta)\) of the semidirect product \(G\rtimes\langle\theta\rangle\), and
write \(G\theta\) for \(\{(g,\theta)\mid g\in G\}\).
Define the conjugation action of \(G\) on \(G\theta\) by
\(g\cdot\gamma=g\gamma g^{-1}\),
and call its orbits the \emph{\(G\)-conjugacy classes} of \(G\theta\).
We write the simple coroot and the fundamental weight corresponding to \(\alpha_i\)
as \(\alpha_i^\vee,\omega_i\), respectively,
and write \(P,Q\) for the weight lattice and the root lattice, respectively.
For a diagonalizable \(K\)-group \(D\), the group of \(K\)-group homomorphisms \(D\to\mathbb G_m\) is called
the \emph{character group} of \(D\) and is denoted by \(X^*(D)\).
We identify \(X^*(T)\) with \(P\).
In what follows, let \(c\) be a \(\theta\)-Coxeter element.

We write \(\Bcal\) for the flag variety \(G/B\).
Define the action of the semidirect product \(G\rtimes\langle\theta\rangle\) on \(\Bcal\)
by \((g\theta^a)\cdot hB=g\theta^a(h)B\) for \(a\in\Z/d\Z\).
The symbol \(BwB\) denotes the double coset \(BnB\)
given by a representative \(n\in N_G(T)\) of \(w\in W\).
Since \(T\subset B\), the double coset \(BnB\) does not depend on the choice of the representative.
Define the \emph{relative position} of flags \(F=gB\) and \(F'=g'B\) by
\[
 \pos(F,F')=w\quad\Longleftrightarrow\quad g^{-1}g'\in BwB.
\]

\begin{definition}
\label{def:fixed}
For a positive integer \(N\) divisible by \(d\), using cyclic indices \(k\in\Z/N\Z\), define the locally closed subscheme
\[
 C_{c,N}
 =\bigl\{F=(F_k)\in\Bcal^N\mid
   \forall\,k\in\Z/N\Z,\
   \pos(F_k,F_{k+1})=\theta^k(c)\bigr\}.
\]
We call the quotient stack by the diagonal \(G\)-action
\[
 \Conf^{\mathrm{cyc}}_{c,N}=[C_{c,N}/G]
\]
the stack of \emph{cyclic flag configurations}.
Define the \emph{rotation} \(\rot:C_{c,N}\to C_{c,N}\)
by \(\rot(F)_k=\theta\cdot F_{k-1}\).
Since \(\rot\) satisfies \(\rot(g\cdot F)=\theta(g)\cdot\rot(F)\) with respect to the diagonal action of \(G\),
it induces an autoequivalence of \(\Conf^{\mathrm{cyc}}_{c,N}\).
We also denote this autoequivalence by \(\rot\).
We define the \emph{fixed-point stack} \((\Conf^{\mathrm{cyc}}_{c,N})^{\rot}\) of \(\rot\) as follows.
For every \(K\)-scheme \(S\), let
\[
 (\Conf^{\mathrm{cyc}}_{c,N})^{\rot}(S)
 :=\bigl\{(\xi,\eta)\mid
   \xi\in\Conf^{\mathrm{cyc}}_{c,N}(S),\
   \eta:\rot\xi\xrightarrow{\sim}\xi\bigr\}.
\]
A morphism from \((\xi,\eta)\) to \((\xi',\eta')\) is
a morphism \(f:\xi\to\xi'\) satisfying \(f\circ\eta=\eta'\circ\rot(f)\).
Composition of morphisms and base change are those of \(\Conf^{\mathrm{cyc}}_{c,N}\).
\end{definition}

In Appendix~\ref{app:rotation}, we discuss, in the case \(N=d(\ell+h^\vee)\),
the presentation of cyclic flag configurations by the braid variety.

\begin{remark}
\label{rem:fixed-local}
For every \(K\)-scheme \(S\), the full subgroupoid consisting of those objects \((\xi,\eta)\) of the fixed-point stack over \(S\)
for which the \(G\)-torsor associated with \(\xi\) is trivial is equivalent to the groupoid
whose set of objects is
\[
 \bigl\{(F,g)\in C_{c,N}(S)\times G(S)\mid F=g\cdot\rot(F)\bigr\}
\]
and whose morphisms from \((F,g)\) to \((F',g')\) are the elements \(h\in G(S)\) satisfying
\(F'=h\cdot F\) and \(g'=hg\theta(h)^{-1}\).
General objects and morphisms are obtained from these by fppf descent.
In particular, if \(S\) is the spectrum of an algebraically closed field, then every \(G\)-torsor is trivial, so
the above groupoid is equivalent to the whole groupoid of the fixed-point stack over \(S\).
\end{remark}

Let \(U^-\) be the unipotent radical of the Borel subgroup containing \(T\) and opposite to \(B\).
Define \(\varphi_i:\mathrm{SL}_2\to G\) as the unique homomorphism satisfying the conditions
\[
 \varphi_i\begin{pmatrix}1&a\\0&1\end{pmatrix}=x_i(a),\qquad
 \varphi_i\begin{pmatrix}t&0\\0&t^{-1}\end{pmatrix}=\alpha_i^\vee(t).
\]
Set \[
 \dot s_i=\varphi_i\begin{pmatrix}0&-1\\1&0\end{pmatrix}.
\]
Since the \(\dot s_i\) satisfy the braid relations, for a reduced word \(i_1\cdots i_n\) of \(w\in W\), the product
\(\dot s_{i_1}\cdots\dot s_{i_n}\) along it does not depend on the choice of the reduced word. We denote it by \(\dot w\).
Set \[
 U^c=U\cap\dot cU^-\dot c^{-1},\qquad
 \Sigma_c=U^c\dot c\theta.
\]
We call \(\Sigma_c\) the \emph{twisted Steinberg cross-section}.
For a positive integer \(N\) divisible by \(d\), define the scheme-theoretic preimage of \(Z^\theta\) under the power map \(\Sigma_c\to G,\ \gamma\mapsto\gamma^N\)
by
\[
 \Sigma_{c,N}^{\mathrm{per}}
 =\{\gamma\in\Sigma_c\mid\gamma^N\in Z^\theta\}.
\]

From \(s_i\omega_j=\omega_j-\delta_{ij}\alpha_i\), we have \((1-c\theta)P=Q+(1-\theta)P\).
Hence the character groups of the fixed subgroup \(T^{c\theta}\) of the automorphism \(c\theta:T\to T\) and of \(Z^\theta\) are both \(P/(Q+(1-\theta)P)\), and
the natural inclusion \(Z^\theta\subset T^{c\theta}\) is an isomorphism.

We define the \emph{\(\theta\)-conjugacy class} of \(w\in W\) to be \(\{vw\theta(v)^{-1}\mid v\in W\}\).
When the fixed space of \(w\theta\) on \(P\otimes_\Z\Q\) is zero,
we call \(w\theta\) \emph{elliptic}.
The element \(c\theta\) is elliptic \cite[Lemma~7.4~(i)]{Springer1974}.
The element \(c\) also has minimal length in its \(\theta\)-conjugacy class.
Indeed, if a reduced word of an element \(w\) of this class contains no letter in a \(\theta\)-orbit \(o\),
then \(\sum_{i\in o}\omega_i\) is a nonzero vector fixed by \(w\theta\), contradicting ellipticity.
Hence every reduced word contains a letter in each orbit, and \(\len(w)\ge r=\len(c)\).

\begin{proposition}
\label{prop:cut}
For a positive integer \(N\) divisible by \(d\), there is a natural equivalence
\begin{equation}
 (\Conf^{\mathrm{cyc}}_{c,N})^{\rot}\simeq\BZ\times_K\Sigma_{c,N}^{\mathrm{per}}.
 \label{eq:cut-stack}
\end{equation}
In particular, the stabilizer group of each geometric point is \(Z^\theta\).
\end{proposition}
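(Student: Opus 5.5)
By Remark~\ref{rem:fixed-local}, the fixed-point stack is the fppf stackification of the groupoid of pairs \((F,g)\) with \(F=g\cdot\rot(F)\), and morphisms \(h\colon(F,g)\to(hF,hg\theta(h)^{-1})\). Put \(\gamma=g\theta\in G\theta\). Then \(h\) acts on \(\gamma\) by ordinary conjugation, and the fixed-point condition reads \(F_k=\gamma\cdot F_{k-1}\) for all \(k\in\Z/N\Z\). Hence \(F\) is determined by \(F_0\) and \(\gamma\) via \(F_k=\gamma^k F_0\). The remaining constraints are two:
\begin{itemize}
\item closing: \(\gamma^N F_0=F_0\);
\item relative positions: \(\pos(\gamma^{k}F_0,\gamma^{k+1}F_0)=\theta^k(c)\).
\end{itemize}
Since \(\gamma^k\) acts on \(\Bcal\) as \(g_k\theta^k\), the position condition for index \(k\) is the \(\theta^k\)-twist of the condition for \(k=0\). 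So all position conditions reduce to the single condition \(\pos(F_0,\gamma F_0)=c\).

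The plan is to rigidify. Using the \(G\)-action, move \(F_0\) to the base flag \(B\); the residual symmetry is then \(B\). Writing \(\gamma=b\theta\) times a representative, \(\pos(B,\gamma B)=c\) says \(\gamma\in B\dot c B\theta\). The groupoid therefore becomes
\[
 \bigl[\{\gamma\in B\dot cB\theta\mid \gamma^NB=B\}/B\bigr],
\]
with \(B\) acting by conjugation. The key input is the twisted Steinberg/He--Lusztig cross-section statement: the conjugation map \(U\times\Sigma_c\to U\dot cT U\theta\), \((u,\sigma)\mapsto u\sigma u^{-1}\), is an isomorphism. Since \(c\) has minimal length in its \(\theta\)-conjugacy class, this holds for elements of \(B\dot cB\theta\) after first using \(T\) to normalize the torus part.

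The torus normalization goes as follows. Conjugating \(t\dot c\theta\) by \(s\in T\) multiplies it by \(s\,(c\theta)(s)^{-1}\). Since \(c\theta\) is elliptic, \(1-c\theta\) is an isogeny of \(T\). Hence \(T\) acts transitively on the torus parts, and the stabilizer in \(T\) is \(T^{c\theta}\). Combining these, \(B\) acts on \(B\dot cB\theta\) with slice \(\Sigma_c\) and stabilizer \(T^{c\theta}\), which equals \(Z^\theta\) by the computation preceding the proposition. This gives
\[
 [B\dot cB\theta/B]\simeq [\Sigma_c/Z^\theta]=\BZ\times\Sigma_c .
\]
The action of \(Z^\theta\) is trivial because \(Z^\theta\) is central and commutes with \(\theta\). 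This trivial action is exactly what produces the product with \(\BZ\).

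Finally, the closing condition \(\gamma^NB=B\) must be shown to cut out \(\Sigma^{\mathrm{per}}_{c,N}\) scheme-theoretically. Since \(d\mid N\), \(\gamma^N\in G\) lies in \(B\). I would show it actually lies in \(Z^\theta\), using the fact that \(\gamma^N\) commutes with \(\gamma\). The element \(\gamma\) is regular (it lies in \(B\dot cB\theta\) with \(c\theta\) elliptic), and its centralizer meets \(B\) only in \(Z^\theta\). This is the same stabilizer computation run in families: the slice isomorphism shows that the centralizer of \(\sigma\in\Sigma_c\) inside \(B\) is \(Z^\theta\) scheme-theoretically. Conversely, \(\gamma^N\in Z^\theta\) clearly implies \(\gamma^NB=B\).

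Naturality in \(S\) and compatibility with fppf descent are formal, because all the constructions are \(G\)-equivariant. I expect the main obstacle to be this scheme-theoretic stabilizer statement: \(Z_B(\gamma)=Z^\theta\) as group schemes, uniformly over \(\Sigma_c\). This is what is needed both for the closing condition and for identifying the automorphism groups. I would deduce it from the freeness of the \(U\)-action in the He--Lusztig isomorphism together with the torus computation \(T^{c\theta}=Z^\theta\).
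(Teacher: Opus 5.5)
Your proposal is correct and follows the paper's proof essentially step by step. You rigidify to \(F_0=B\) to get \([\{a\in B\dot cB\theta\mid a^N\in B\}/B]\), then combine the He--Lusztig isomorphism with the fppf-locally surjective isogeny \(t\mapsto t\,(c\theta)(t)^{-1}\) (kernel \(T^{c\theta}=Z^\theta\)) to get \((B/Z^\theta)\times\Sigma_c\cong B\dot cB\theta\), and finally use \(Z_B(\sigma)=Z^\theta\) to upgrade \(\sigma^N\in B\) to \(\sigma^N\in Z^\theta\). The one slip is that the He--Lusztig isomorphism has target \(U\dot cU\theta\), not \(U\dot cTU\theta=B\dot cB\theta\), which is larger by \(\dim T\); your torus-normalization paragraph supplies exactly the missing \(T\)-factor, so the argument is unaffected.
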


\begin{proof}
Using the description in Remark~\ref{rem:fixed-local},
we represent a fixed object fppf locally by a pair of a flag configuration \(F\) and an element \(g\in G\) satisfying \(F=g\cdot\rot(F)\).
Setting \(\gamma=g\theta\), we have \(F_k=\gamma\cdot F_{k-1}\), hence \(F_k=\gamma^k\cdot F_0\), and
\[
 \pos(F_0,\gamma\cdot F_0)=c,\qquad\gamma^N\cdot F_0=F_0.
\]
Conversely, when \(\gamma\in G\theta\) and a flag \(F_0\) satisfy these two conditions,
the flags \(F_k=\gamma^k\cdot F_0\) define a point of \(C_{c,N}\), since \(\gamma^k\) sends the relative position \(c\) to \(\theta^k(c)\).
The morphism \(h\) of Remark~\ref{rem:fixed-local} sends \((\gamma,F_0)\)
to \((h\gamma h^{-1},h\cdot F_0)\).
In an fppf local presentation with \(F_0=B\), we have \(\gamma\in B\dot cB\theta\),
and the remaining action is conjugation by \(B\). Therefore
\begin{equation}
 (\Conf^{\mathrm{cyc}}_{c,N})^{\rot}
 \simeq
 [\{a\in B\dot cB\theta\mid a^N\in B\}/B].
 \label{eq:incidence}
\end{equation}
Since \(c\) is an element of minimal length in an elliptic \(\theta\)-conjugacy class, \cite[Theorem~3.12]{HeLusztig2012} gives an isomorphism of schemes
\begin{equation}
 U\times\Sigma_c\xrightarrow{\sim}U\dot cU\theta,
 \qquad(u,\gamma)\longmapsto u\gamma u^{-1}.
 \label{eq:HL-section}
\end{equation}
We extend the isomorphism~\eqref{eq:HL-section} to the conjugation action of \(B\).
Since \(T^{c\theta}=Z^\theta\), the homomorphism
\(T\to T,\ t\mapsto t(c\theta)(t)^{-1}\) is an isogeny with kernel \(Z^\theta\).
Write \(a\in B\dot cB\theta\) uniquely as \(a=t_0y\) with \(t_0\in T\) and \(y\in U\dot cU\theta\),
and lift \(t_0\) fppf locally to \(t\in T\) along this isogeny.
Then \(t^{-1}at\in U\dot cU\theta\), and by the factorization above,
for this \(t\), the pair \((u,\gamma)\) satisfying \(a=tu\gamma(tu)^{-1}\) is uniquely determined.
The lift \(t\) is determined up to multiplication by an element of \(Z^\theta\), and since \(Z^\theta\) is central,
\(\gamma\) and the coset \(tuZ^\theta\in B/Z^\theta\) are determined by \(a\).
Since this construction is compatible with base change, by descent we obtain
a \(B\)-equivariant isomorphism of schemes, with respect to the left multiplication of \(B\) on \(B/Z^\theta\)
and the conjugation action on \(B\dot cB\theta\),
\begin{equation}
 (B/Z^\theta)\times\Sigma_c\xrightarrow{\sim}B\dot cB\theta,
 \qquad(bZ^\theta,\gamma)\longmapsto b\gamma b^{-1}.
 \label{eq:B-factorization}
\end{equation}

In particular, the centralizer in \(B\) of \(\gamma\in\Sigma_c\) is \(Z^\theta\).
If \(\gamma^N\in B\), then \(\gamma^N\) commutes with \(\gamma\), and hence \(\gamma^N\in Z^\theta\).
Therefore the inverse of \eqref{eq:B-factorization} maps \(\{a\in B\dot cB\theta\mid a^N\in B\}\)
to \((B/Z^\theta)\times\Sigma_{c,N}^{\mathrm{per}}\).
Since \(B\) acts transitively on \(B/Z^\theta\) with stabilizer \(Z^\theta\),
we obtain \eqref{eq:cut-stack} from \eqref{eq:incidence}.
\end{proof}

\section{The adjoint quotient and finite subgroups of the torus}
\label{sec:norm}

We write \(Z_G(\gamma)\) for the centralizer of \(\gamma\in G\theta\) in \(G\).
We call \(\gamma\in G\theta\) \emph{regular} if \(\dim Z_G(\gamma)=r\).
Define the \emph{affine quotient} for the conjugation action by
\(G\theta/\!/G=\Spec_K\mathcal O(G\theta)^G\), and write \(\pi:G\theta\to G\theta/\!/G\) for the quotient morphism.

\begin{lemma}
\label{lem:quotient-section}
The restriction
\[
 \pi|_{\Sigma_c}:\Sigma_c\hookrightarrow G\theta\xrightarrow{\ \pi\ }G\theta/\!/G
\]
is an isomorphism compatible with field extensions.
For every algebraically closed extension field \(\Omega\) of \(K\), the set \(\Sigma_c(\Omega)\) consists of regular elements,
each regular element of \(G\theta(\Omega)\) is \(G(\Omega)\)-conjugate to exactly one element of \(\Sigma_c(\Omega)\), and
at each point \(\gamma\) of \(\Sigma_c(\Omega)\) we have
\begin{equation}
 T_\gamma(G\theta)=T_\gamma(G\cdot\gamma)\oplus T_\gamma\Sigma_c.
 \label{eq:transverse}
\end{equation}
\end{lemma}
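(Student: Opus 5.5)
The plan is to reduce to the twisted Chevalley restriction theorem and Steinberg's coordinate argument, as extended to disconnected groups by Mohrdieck and by He--Lusztig. The first step is a description of $\mathcal O(G\theta)^G$. Restriction along $T\theta\subset G\theta$ should identify $\mathcal O(G\theta)^G$ with $\mathcal O(T_\theta)^{W^\theta}$, where $T_\theta=T/(1-\theta)T$ is the $\theta$-coinvariant torus. This ring is generated by the twining characters $\gamma\mapsto\operatorname{tr}(\gamma\mid V_{\varpi_o})$, $o\in I_\theta$, where $\varpi_o=\sum_{i\in o}\omega_i$ and $V_{\varpi_o}$ is extended to $G\rtimes\langle\theta\rangle$ as in Section~\ref{sec:fusion}. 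The ring is a polynomial ring in these $r$ generators. For $\theta=\id$ this is Steinberg's theorem; in the twisted case it is Mohrdieck's theorem. The proof runs over $\Q$, since everything is defined over the prime field, so the statement is compatible with field extensions. Thus $G\theta/\!/G\cong\mathbb A^r$, with coordinates $\chi_o=\operatorname{tr}(-\mid V_{\varpi_o})$.

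The second step is to show that $\pi|_{\Sigma_c}$ is an isomorphism. Choose a reduced word of $c$ with one letter $i_o$ from each orbit. Then $U^c\cong\mathbb A^r$ via $\gamma=\prod_o x_{i_o}(a_o)\dot s_{i_o}\cdot\theta$. I would imitate Steinberg's triangularity computation. Order the orbits so that the word is read in this order. Then $\chi_o(\gamma)=\pm a_o+(\text{polynomial in the } a_{o'} \text{ for earlier } o')$, up to a suitable ordering. The argument looks at the action of $\gamma$ on the weight spaces of $V_{\varpi_o}$ along the highest-weight chain: only the term passing through $x_{i_o}$ contributes linearly in $a_o$, and $\theta$ fixes the highest weight vector. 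A unitriangular polynomial map $\mathbb A^r\to\mathbb A^r$ is an isomorphism over $\Z$, hence over any $K$.

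The third step concerns regularity, uniqueness of conjugates, and transversality over algebraically closed $\Omega$. Regular elements of $G\theta$ form the open set where the fibres of $\pi$ have dimension $\dim G-r$. Each fibre of $\pi$ contains a unique regular orbit, which is dense in the fibre; this is Steinberg's theorem for $\theta=\id$ and Mohrdieck's in general. Since $\Sigma_c\to G\theta/\!/G$ is bijective on points, uniqueness reduces to showing that every $\gamma\in\Sigma_c$ is regular. For this I would use the He--Lusztig isomorphism \eqref{eq:B-factorization}. It shows that the conjugation map $G\times\Sigma_c\to G\theta$ has image containing the open set $B\dot cB\theta$ up to conjugation, namely $G\cdot B\dot c B\theta$, which is dense. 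So the map is dominant, and by dimension count ($\dim G+r=\dim G\theta+r$) the generic fibre has dimension $r$.

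To get regularity at every point, I would use the centralizer computation $Z_B(\gamma)=Z^\theta$ from the proof of Proposition~\ref{prop:cut}. Combined with the Bruhat decomposition, it should show that $Z_G(\gamma)$ has dimension $\le r$. Here I would follow Steinberg's argument: $Z_G(\gamma)$ meets the big cell in a set controlled by $U^c$. Upper semicontinuity then gives $\dim Z_G(\gamma)\ge r$. Transversality \eqref{eq:transverse} follows because $\pi|_{\Sigma_c}$ is an isomorphism, so $d\pi$ restricted to $T_\gamma\Sigma_c$ is injective, while $T_\gamma(G\cdot\gamma)\subset\ker d\pi_\gamma$. Regularity gives $\dim T_\gamma(G\cdot\gamma)=\dim G-r$, and dimensions add up.

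I expect the main obstacle to be the triangularity of the twining characters on $\Sigma_c$ in the twisted case. One has to check that the orbit-sum fundamental representations detect each coordinate $a_o$ linearly. The ordering of letters in $c$ also interacts with $\theta$, so it needs care. If that computation becomes unwieldy, I would instead cite He--Lusztig's result that $\Sigma_c$ meets each regular class exactly once, together with normality of $G\theta/\!/G\cong\mathbb A^r$ and Zariski's main theorem. A bijective morphism $\mathbb A^r\to\mathbb A^r$ that is birational (separable, since $\operatorname{char}K=0$) is then an isomorphism.
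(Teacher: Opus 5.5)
Your outline is reasonable, and your last step is exactly the paper's. Once every $\gamma\in\Sigma_c(\Omega)$ is known to be regular, \eqref{eq:transverse} follows from three facts: $d\pi_\gamma$ is injective on $T_\gamma\Sigma_c$, $T_\gamma(G\cdot\gamma)\subset\ker d\pi_\gamma$, and $\dim T_\gamma(G\cdot\gamma)=\dim G-r$. Your reduction of uniqueness to bijectivity of $\pi|_{\Sigma_c}$ plus uniqueness of the regular orbit in each fibre is also correct.

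\textbf{The genuine gap is the regularity of every point of $\Sigma_c$}, on which both of these steps depend.
\begin{enumerate}
\item Finiteness of $Z_B(\gamma)=Z^\theta$ only says that $Z_G(\gamma)$ acts on $\Bcal$ with finite stabilizer at $B$. That gives $\dim Z_G(\gamma)\le\dim\Bcal$, far from $r$.
\item Refining this by Bruhat cells is circular. By \eqref{eq:B-factorization} and injectivity of $\pi|_{\Sigma_c}$, the orbit $Z_G(\gamma)\cdot B$ is all of $\{F\in\Bcal\mid\pos(F,\gamma\cdot F)=c\}$. The dimension of that set is therefore $\dim Z_G(\gamma)$ again, so it gives no bound.
\item Semicontinuity cannot close the gap. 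It makes the regular locus of $\Sigma_c$ open, so your dominance count gives regularity only on a dense open subset of $\Sigma_c$. It says nothing about the point of $\Sigma_c$ in a special fibre of $\pi$ (e.g.\ $\pi^{-1}(\pi(1))$ when $\theta=\id$), which is exactly where the centralizer dimension can jump.
\item The inequality $\dim Z_G(\gamma)\ge r$ that you do extract is the trivial direction. It already follows from $T_\gamma(G\cdot\gamma)\cap T_\gamma\Sigma_c=0$.
\end{enumerate}
The paper does not prove regularity by hand. It imports the isomorphism, the regularity of $\Sigma_c$, and the cross-section property together from \cite[Theorems~1.4, 6.1, 7.9]{Steinberg1965} and \cite[Theorems~5.1--5.2]{Mohrdieck2000}. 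It then proves only \eqref{eq:transverse}, after reducing to $K$ algebraically closed. You need that input or an equivalent nontrivial theorem; as written, ``$Z_G(\gamma)$ meets the big cell in a set controlled by $U^c$'' is not an argument.

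\textbf{Smaller points.}
\begin{enumerate}
\item In Step~2, the reading order of the word is not the order for which triangularity holds. Also, ``only the term through $x_{i_o}$ is linear in $a_o$'' does not show that the remaining monomials avoid $a_o$.
\item The mechanism that makes Step~2 work is ellipticity. Write $c=s_{i_1}\cdots s_{i_r}$ with $i_j$ in the orbit $o_j$, and expand $x_{i_j}(a_j)=\sum_k a_j^kE_{i_j}^{(k)}$. The monomial $\prod_j a_j^{k_j}$ contributes to the trace only on a weight $\nu$ with $(1-c\theta)\nu=\sum_jk_j\beta_j$, where $\beta_j=s_{i_1}\cdots s_{i_{j-1}}\alpha_{i_j}=(1-c\theta)\varpi_{o_j}$. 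Since $1-c\theta$ is invertible on $P\otimes_\Z\Q$, $\nu=\sum_jk_j\varpi_{o_j}$ is a dominant weight of $V_{\varpi_o}$. Hence $\nu\le\varpi_o$, with equality only for the linear monomial in the coordinate attached to $o$, whose coefficient is $1$. So triangularity holds with respect to the height $\langle\varpi_o,\rho^\vee\rangle$.
\item In your fallback, the statement that $\Sigma_c$ meets each regular class exactly once is Steinberg's and Mohrdieck's theorem, not He--Lusztig's.
\item $B\dot cB\theta$ is not open in $G\theta$, so the density of $G\cdot B\dot cB\theta$ needs its own justification.
\end{enumerate}
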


\begin{proof}
The variety \(\Sigma_c\) is defined over \(K\), and taking invariants commutes with extension of the base field.
Whether a morphism is an isomorphism can be checked after a faithfully flat field extension.
Therefore it suffices to treat the case where \(K\) is algebraically closed and \(\Omega=K\).

Choose a word \(i_1\cdots i_r\) representing \(c\). Since the word is reduced, the morphism
\[
 \mathbb A_K^r\xrightarrow{\sim}\Sigma_c,\qquad
 (a_1,\ldots,a_r)\longmapsto
 x_{i_1}(a_1)\dot s_{i_1}\cdots x_{i_r}(a_r)\dot s_{i_r}\theta
\]
is an isomorphism \cite[Lemmas~7.2--7.3, Theorem~7.1]{Steinberg1965}, \cite[Lemma~5.1]{Mohrdieck2000}.
When \(\theta=\id\), applying
\cite[Theorems~1.4, 6.1, 7.9]{Steinberg1965} to this product presentation shows
that \(\pi|_{\Sigma_c}\) is an isomorphism,
that \(\Sigma_c\) consists of regular elements, and that each regular element is \(G\)-conjugate to exactly one element of \(\Sigma_c\).
Also when \(\theta\ne\id\), applying
\cite[Definition~5.1, Theorems~5.1--5.2]{Mohrdieck2000} to the same product presentation gives the same conclusions.

At \(\gamma\in\Sigma_c(K)\), the differential \(d\pi\) is an isomorphism on \(T_\gamma\Sigma_c\) and hence surjective, and
the dimension of \(\ker d\pi\) is \(\dim G-r\).
By conjugation invariance, \(\ker d\pi\) contains \(T_\gamma(G\cdot\gamma)\), and
since \(\gamma\) is regular, the dimension of \(T_\gamma(G\cdot\gamma)\) is also \(\dim G-r\).
Hence the two coincide, and we obtain \eqref{eq:transverse}.
\end{proof}

Let \(\overline K\) be an algebraic closure of \(K\).
For \(x\in G\rtimes\langle\theta\rangle\), we write the differential at the identity of its conjugation action on \(G\) as
\(\Ad(x)\).
\begin{proposition}
\label{prop:etale}
For a positive integer \(N\) divisible by \(d\), the scheme \(\Sigma_{c,N}^{\mathrm{per}}\) is finite étale.
\end{proposition}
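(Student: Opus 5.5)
We need to show \(\Sigma_{c,N}^{\mathrm{per}}\) is finite étale over \(K\). The plan is to prove separately that it is quasi-finite and proper (hence finite), and that it is unramified over \(\overline K\) by a tangent space computation. Finite plus unramified over a field gives étale, since it is then a finite disjoint union of spectra of finite separable extensions, and characteristic \(0\) makes separability automatic. All statements may be checked after base change to \(\overline K\), and by Lemma~\ref{lem:quotient-section} the formation of \(\Sigma_c\) is compatible with this.

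\emph{Finiteness.} Let \(\gamma\in\Sigma_{c,N}^{\mathrm{per}}(\overline K)\). Then \(\gamma^N=z\in Z^\theta\), and since \(Z^\theta\) is finite, we may write \(\gamma^{NM}=1\) with \(M=|Z^\theta|\). Here \(N\) is divisible by \(d\), so \(\gamma^{NM}\in G\) is an honest element. Thus \(\gamma\) is a finite-order element of \(G\rtimes\langle\theta\rangle\), hence semisimple in characteristic \(0\). By a Steinberg/Mohrdieck type result, the semisimple \(G\)-classes in \(G\theta\) are parametrized by \(T/W^\theta\) up to the \((1-\theta)\)-twist, i.e.\ by \((T/(1-\theta)T)/W^\theta\); see also \cite{Mohrdieck2000}. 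Every such class is conjugate to \(t\theta\) with \(t\in T^\theta\). The condition \((t\theta)^N=N_\theta(t)^{N/d}\in Z^\theta\), where \(N_\theta\) is the norm \(t\,\theta(t)\cdots\theta^{d-1}(t)\), cuts out finitely many points of the torus \((T/(1-\theta)T)\). Since \(\pi|_{\Sigma_c}\) is injective on points, \(\Sigma_{c,N}^{\mathrm{per}}\) has finitely many \(\overline K\)-points. Its image in \(G\theta/\!/G\) is closed, and \(\Sigma_{c,N}^{\mathrm{per}}\) is a closed subscheme of \(\Sigma_c\cong\mathbb A^r\). It is therefore a finite-type scheme with finitely many points, hence finite over \(K\).

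\emph{Unramifiedness.} This is the main step. We show \(T_\gamma\Sigma_{c,N}^{\mathrm{per}}=0\) at each \(\overline K\)-point \(\gamma\). A tangent vector is \(v\in T_\gamma\Sigma_c\) with \(d(\mathrm{pow}_N)_\gamma(v)\in T_{z}(Z^\theta)=0\). Right-translate \(v\) to \(X\in\mathfrak g\), i.e.\ write \(\gamma\exp(\epsilon X)\). Then
\[
 (\gamma e^{\epsilon X})^N=\gamma^N\exp\Bigl(\epsilon\sum_{k=0}^{N-1}\Ad(\gamma)^{-k}X\Bigr),
\]
so the condition is \(\sum_{k=0}^{N-1}\Ad(\gamma)^{-k}X=0\). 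Since \(\gamma^N\) is central, \(\Ad(\gamma)^N=1\), and \(\Ad(\gamma)\) is semisimple. Hence the condition says exactly that \(X\) has zero component in the \(\Ad(\gamma)\)-fixed subspace \(\mathfrak g^{\gamma}=\operatorname{Lie}Z_G(\gamma)\). So \(X\in(1-\Ad(\gamma))\mathfrak g\), which under right translation is \(T_\gamma(G\cdot\gamma)\): the orbit tangent space is \(\{Y\gamma-\gamma Y\}\), which corresponds to \((\Ad(\gamma)^{-1}-1)Y\). But \(v\in T_\gamma\Sigma_c\), and by the transversality \eqref{eq:transverse} of Lemma~\ref{lem:quotient-section} we have \(T_\gamma\Sigma_c\cap T_\gamma(G\cdot\gamma)=0\). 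Hence \(v=0\).

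The delicate points are these. First, the semisimplicity of \(\gamma\), which makes \(\Ad(\gamma)\) diagonalizable, so that the averaging operator kills precisely the image of \(1-\Ad(\gamma)\). Second, correctly identifying the orbit tangent space with \((1-\Ad(\gamma))\mathfrak g\) inside \(G\rtimes\langle\theta\rangle\).
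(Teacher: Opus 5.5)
Your proof is correct and is essentially the paper's argument. The paper likewise identifies the differential of \(\gamma\mapsto\gamma^N\) with \(\sum_j\Ad(\gamma)^j\), uses \(\Ad(\gamma)^N=1\) in characteristic \(0\) to get kernel \(\operatorname{im}(\Ad(\gamma)-1)=T_\gamma(G\cdot\gamma)\), and concludes \(T_\gamma\Sigma_{c,N}^{\mathrm{per}}=0\) from the transversality \eqref{eq:transverse}. Your separate finiteness step (via semisimplicity, conjugacy to \(t\theta\), and the norm map) is correct but redundant: a finite-type \(K\)-scheme with vanishing tangent spaces at all geometric points is already a finite disjoint union of spectra of finite separable extensions, which is how the paper finishes.
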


\begin{proof}
Let \(\gamma\in\Sigma_{c,N}^{\mathrm{per}}(\overline K)\).
Using the differentials of right multiplication by \(\gamma^{-1}\) and by \(\gamma^{-N}\), respectively,
we identify \(T_\gamma(G\theta)\) and \(T_{\gamma^N}G\) with \(\operatorname{Lie}(G)\).
The differential of the power map is
\[
 d(\gamma\mapsto\gamma^N)_\gamma=\sum_{j=0}^{N-1}\Ad(\gamma)^j.
\]
Since \(\Ad(\gamma)^N=1\) and the characteristic is zero, the kernel of this differential is
\(\operatorname{im}(\Ad(\gamma)-1)=T_\gamma(G\cdot\gamma)\).
Since the tangent space of \(Z^\theta\) is zero, \eqref{eq:transverse} gives
\(T_\gamma\Sigma_{c,N}^{\mathrm{per}}=0\).
Since \(\Sigma_{c,N}^{\mathrm{per}}\) is of finite type and its tangent spaces at all geometric points are zero, it is finite étale.
\end{proof}

Let \((1-\theta)T\) be the image of the homomorphism \(t\mapsto t\theta(t)^{-1}\), and
let \(T_\theta=T/(1-\theta)T\).
For an object \(X\) on which \(\theta\) acts, we write \(X^\theta\) for the part of \(X\) fixed by \(\theta\).
The action of \(W^\theta\) on \(T\) induces an action on \(T_\theta\).
We write the quotient by this action as
\(T_\theta/W^\theta=\Spec_K\mathcal O(T_\theta)^{W^\theta}\).
Changing the representative of \([t]\in T_\theta\) corresponds to \(T\)-conjugation of \(t\theta\).
Hence the regularity of \(t\theta\) depends only on \([t]\).
We call the open part consisting of the \([t]\) for which \(t\theta\) is regular the \emph{regular part} of \(T_\theta\).
The map \(t\mapsto\pi(t\theta)\) induces an isomorphism \(\iota:T_\theta/W^\theta\xrightarrow{\sim}G\theta/\!/G\)
\cite[Corollary~6.4]{Steinberg1965}, \cite[Theorem~2.1, (2.40)--(2.41)]{Mohrdieck2000}.
Since taking invariants commutes with extension of the base field, these results over algebraically closed fields descend to \(K\).

Define the \emph{norm maps} of the lattice \(P\) and the torus \(T\), respectively, by
\[
 \begin{aligned}
 \Nm_\theta:P&\longrightarrow P^\theta,&
 \mu&\longmapsto\sum_{a=0}^{d-1}\theta^a(\mu),\\
 \Nm_\theta:T&\longrightarrow T^\theta,&
 t&\longmapsto\prod_{a=0}^{d-1}\theta^a(t).
 \end{aligned}
\]
For \(\mu\in P\), we have
\(\mu(\Nm_\theta(t))=\Nm_\theta(\mu)(t)\).
The map \(\Nm_\theta\) on \(T\) is trivial on \((1-\theta)T\) and induces \(T_\theta\to T^\theta\).
We also write it as \(\Nm_\theta\).
For \(\ell\in\Z_{\ge0}\), define the closed subgroup scheme \(T_{\theta,\ell}\) of \(T_\theta\) by
\begin{equation}
 T_{\theta,\ell}
 =\bigl\{t\in T_\theta\bigm|\Nm_\theta(t)^{\ell+h^\vee}\in Z(G)\bigr\}.
 \label{eq:norm-torus}
\end{equation}
We write the regular part of \(T_{\theta,\ell}\) as
\(T_{\theta,\ell}^{\mathrm{reg}}\).

\begin{proposition}
\label{prop:norm}
The group \(T_{\theta,\ell}\) is a finite étale diagonalizable group, and
\begin{equation}
 X^*(T_{\theta,\ell})=P^\theta/(\ell+h^\vee)\Nm_\theta(Q),\qquad
 \Sigma_{c,d(\ell+h^\vee)}^{\mathrm{per}}\cong T_{\theta,\ell}^{\mathrm{reg}}/W^\theta.
 \label{eq:norm-quotient}
\end{equation}
The latter isomorphism is induced by \(\iota^{-1}\circ\pi\).
\end{proposition}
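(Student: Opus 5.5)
The plan is to compute the character group of \(T_{\theta,\ell}\) directly, and then to identify the two sides of the second isomorphism as reduced finite closed subschemes of \(G\theta/\!/G\) with the same geometric points.

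\emph{Character group.} Characters of \(T_\theta=T/(1-\theta)T\) are the characters \(\mu\in P\) that are trivial on \((1-\theta)T\), i.e.\ \(\mu=\theta(\mu)\). Hence \(X^*(T_\theta)=P^\theta\).

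Since \(G\) is simply connected, \(Z(G)\) is the common kernel of the roots, so \(X^*(T/Z(G))=Q\). The condition \(\Nm_\theta(t)^{\ell+h^\vee}\in Z(G)\) therefore says \(\alpha(\Nm_\theta(t))^{\ell+h^\vee}=1\) for all \(\alpha\in Q\). Using \(\mu(\Nm_\theta(t))=\Nm_\theta(\mu)(t)\), this becomes \(\bigl((\ell+h^\vee)\Nm_\theta(\alpha)\bigr)(t)=1\). So \(T_{\theta,\ell}\) is the diagonalizable group with character group \(P^\theta/(\ell+h^\vee)\Nm_\theta(Q)\).

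This quotient is finite because \(\Nm_\theta(Q)\otimes\Q=(Q\otimes\Q)^\theta=P^\theta\otimes\Q\). A finite diagonalizable group is étale in characteristic \(0\).

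\emph{Geometric points.} Set \(N=d(\ell+h^\vee)\) and work over \(\overline K\).

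Let \(\gamma\in\Sigma_{c,N}^{\mathrm{per}}(\overline K)\). Then \(\gamma^N\) is central of finite order, so \(\gamma\) has finite order in \(G\rtimes\langle\theta\rangle\) and is semisimple. By Steinberg's theorem on semisimple automorphisms (the form used in \cite{Mohrdieck2000}), \(\gamma\) is \(G\)-conjugate to some \(t\theta\) with \(t\in T\).

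We have \((t\theta)^d=\Nm_\theta(t)\), hence \((t\theta)^N=\Nm_\theta(t)^{\ell+h^\vee}\). This element lies in \(T^\theta\), so it is central if and only if it lies in \(Z^\theta\). Regularity is preserved by conjugation, so \([t]\in T^{\mathrm{reg}}_{\theta,\ell}\).

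Conversely, let \([t]\in T^{\mathrm{reg}}_{\theta,\ell}\). Then \(t\theta\) is regular, so by Lemma~\ref{lem:quotient-section} it is conjugate to a unique \(\gamma\in\Sigma_c\), and \(\gamma^N\) is conjugate to \(\Nm_\theta(t)^{\ell+h^\vee}\in Z^\theta\). Hence \(\gamma^N\in Z^\theta\).

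Since \(\iota\) identifies \(W^\theta\)-orbits in \(T_\theta\) with points of \(G\theta/\!/G\), the map \(\iota^{-1}\circ\pi\) is a bijection from \(\Sigma_{c,N}^{\mathrm{per}}(\overline K)\) onto \((T^{\mathrm{reg}}_{\theta,\ell}/W^\theta)(\overline K)\).

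\emph{Scheme structure.} Via the isomorphism \(\pi|_{\Sigma_c}\), \(\Sigma^{\mathrm{per}}_{c,N}\) is a closed subscheme of \(G\theta/\!/G\). By Proposition~\ref{prop:etale} it is finite étale, hence reduced.

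On the other side, \(T_{\theta,\ell}\subset T_\theta\) is closed and \(W^\theta\)-stable. In characteristic \(0\), the Reynolds operator makes \(\mathcal O(T_\theta)^{W^\theta}\to\mathcal O(T_{\theta,\ell})^{W^\theta}\) surjective. So \(T_{\theta,\ell}/W^\theta\to T_\theta/W^\theta\) is a closed immersion of a finite reduced scheme. The regular part \(T^{\mathrm{reg}}_{\theta,\ell}/W^\theta\) is open and closed in it, hence also reduced.

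Through \(\iota\), both sides are reduced closed subschemes of \(G\theta/\!/G\), defined over \(K\), with the same \(\overline K\)-points. They therefore coincide, and the isomorphism is the one induced by \(\iota^{-1}\circ\pi\).

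The step I expect to be the main obstacle is the semisimple conjugation into \(T\theta\) together with the matching of regularity on both sides. Here I would rely on Steinberg's theorem that a semisimple automorphism fixes a Borel subgroup and a maximal torus in it. Combined with the pinning-preservation of \(\theta\) and a conjugation by an element of \(G\) (using the Lang-type surjectivity of \(t\mapsto t\theta(t)^{-1}\) onto \((1-\theta)T\)), this brings \(\gamma\) into \(T\theta\).
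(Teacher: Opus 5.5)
Your proposal is correct and follows the paper's proof essentially step by step. You compute the character group as the cokernel of \(q\mapsto(\ell+h^\vee)\Nm_\theta(q)\) from \(Q\) to \(P^\theta\). You match geometric points via the semisimple conjugation \(\gamma\sim t\theta\) (Mohrdieck's Proposition~2.3) and Lemma~\ref{lem:quotient-section}. You identify the scheme structures because both sides are reduced closed subschemes of \(G\theta/\!/G\), using Proposition~\ref{prop:etale} and exactness of finite-group invariants in characteristic \(0\).

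In your final remark the Lang-type surjectivity is not needed. Once \(\Ad(h\gamma h^{-1})=\Ad(g'\theta)\) preserves \((B,T)\), the fact that \(\theta\) preserves \(B\) and \(T\) already forces \(g'\in N_G(B)\cap N_G(T)=T\).
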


\begin{proof}
The group scheme \(T_{\theta,\ell}\) is the kernel of the morphism \(T_\theta\to T/Z(G)\), \(t\mapsto\Nm_\theta(t)^{\ell+h^\vee}Z(G)\).
We have \(X^*(T_\theta)=P^\theta\) and \(X^*(T/Z(G))=Q\), and the map on character groups induced by this morphism is
\(q\mapsto(\ell+h^\vee)\Nm_\theta(q)\).
Since its image is a sublattice of finite index in \(P^\theta\), we obtain the formula for the character group and the finite étaleness.

Set \(N=d(\ell+h^\vee)\).
For \(t\in T(\overline K)\), since \((t\theta)^N=\Nm_\theta(t)^{\ell+h^\vee}\in T^\theta\),
the condition \([t]\in T_{\theta,\ell}\) is equivalent to \((t\theta)^N\in Z^\theta\).
By Lemma~\ref{lem:quotient-section}, an element of \(G\theta(\overline K)\) is regular if and only if it is \(G(\overline K)\)-conjugate to an element of \(\Sigma_c(\overline K)\).
Since \(Z^\theta\) is central, the condition \(\gamma^N\in Z^\theta\) is preserved by \(G\)-conjugation.
Each \(\gamma\in\Sigma_{c,N}^{\mathrm{per}}(\overline K)\) has finite order since \(\gamma^N\in Z^\theta\), so it is a semisimple element of \(G\rtimes\langle\theta\rangle\) and
is conjugate to \(t\theta\) for some \(t\in T(\overline K)\) \cite[Proposition~2.3]{Mohrdieck2000}.
Hence \(\pi(\Sigma_{c,N}^{\mathrm{per}})\) and \(\iota(T_{\theta,\ell}^{\mathrm{reg}}/W^\theta)\) consist of the same geometric points of \(G\theta/\!/G\).
Since \(\pi|_{\Sigma_c}\) is an isomorphism, the former is a reduced closed subscheme by Proposition~\ref{prop:etale}.
In characteristic zero, the functor of invariants under a finite group is exact, so
the closed embedding \(T_{\theta,\ell}^{\mathrm{reg}}\hookrightarrow T_\theta\) induces a closed embedding of the quotients, and since \(\iota\) is an isomorphism, the latter is also a reduced closed subscheme.
Therefore the two coincide.
\end{proof}

\section{Fusion rings}
\label{sec:fusion}

In Section~\ref{subsec:fusion-def}, we define the fusion rings of general affine Dynkin diagrams.
In Section~\ref{subsec:fusion-comparison}, we identify \(T_{\theta,\ell}^{\mathrm{reg}}/W^\theta\) of Proposition~\ref{prop:norm} with the spectrum of \(\mathsf R_\ell(\Aff_\theta(\Dcal^\vee)^\vee)\) and prove the main theorem, Theorem~\ref{thm:main},
and in Section~\ref{subsec:twining}, we prove Corollary~\ref{cor:points}.

\subsection{Fusion rings of affine Lie algebras}
\label{subsec:fusion-def}

We write the vertex set of an affine Dynkin diagram \(\mathsf A\) as \(\{0\}\sqcup J\), using the vertex \(0\) of \cite[Tables~Aff~1--3]{Kac1990} and the set \(J\) of the remaining vertices.
We call the tuple \((a_j^\vee)_{j\in\{0\}\sqcup J}\) of relatively prime positive integers generating the left kernel of the Cartan matrix of \(\mathsf A\) (whose \((i,j)\) entry is the pairing of the simple coroot of the vertex \(i\) and the simple root of the vertex \(j\))
the dual Kac labels of \(\mathsf A\), and their sum \(h^\vee_{\mathsf A}\) the dual Coxeter number of \(\mathsf A\) \cite[\S6.1]{Kac1990}.
Then \(a_0^\vee=1\) \cite[(6.1.1)]{Kac1990}.
We call the Dynkin diagram obtained from \(\mathsf A\) by removing the vertex \(0\) the finite part of \(\mathsf A\).
We write \(\Phi_{\mathsf A}\), \(P_{\mathsf A}\), and \(W_{\mathsf A}\) for its root system, weight lattice, and Weyl group, respectively, and \(\omega_j\) for the fundamental weight of a vertex \(j\in J\).
The affine Weyl group of \(\mathsf A\) is the semidirect product \(W_{\mathsf A}\ltimes M_{\mathsf A}\) of \(W_{\mathsf A}\) and the lattice \(M_{\mathsf A}\subset P_{\mathsf A}\) of translations
\cite[\S6.5, Proposition~6.5]{Kac1990}.

\begin{definition}[{\cite[\S12.4]{Kac1990}}]
\label{def:level-weights}
For \(\ell\in\Z_{\ge0}\), define the finite subset \(P_\ell(\mathsf A)\) of \(P_{\mathsf A}\) by
\[
 P_\ell(\mathsf A)=\Bigl\{\sum_{j\in J}n_j\omega_j\Bigm|n_j\in\Z_{\ge0},\ \sum_{j\in J}a_j^\vee n_j\le\ell\Bigr\},
\]
and call its elements the \emph{dominant integral weights of level \(\ell\)} of \(\mathsf A\).
\end{definition}

We write \(T_{\mathsf A}\) for the split torus over \(\Q\) with character group \(P_{\mathsf A}\), and \(e^\mu\) for the function on \(T_{\mathsf A}\) corresponding to \(\mu\in P_{\mathsf A}\).
Set \[
 T_{\mathsf A,\ell}=\bigl\{t\in T_{\mathsf A}\bigm|\forall\mu\in(\ell+h^\vee_{\mathsf A})M_{\mathsf A},\ e^\mu(t)=1\bigr\},
\]
and write \(T_{\mathsf A,\ell}^{\mathrm{reg}}\) for the open and closed part of \(T_{\mathsf A,\ell}\) defined by \(e^\alpha\ne1\) for all \(\alpha\in\Phi_{\mathsf A}\) \cite[\S3.2]{Hong2018}.
The group \(W_{\mathsf A}\) acts on \(T_{\mathsf A}\) and preserves \(T_{\mathsf A,\ell}\) and \(T_{\mathsf A,\ell}^{\mathrm{reg}}\).
If we regard the Weyl characters \(\chi_\mu\in\Z[P_{\mathsf A}]^{W_{\mathsf A}}\) of \(\Phi_{\mathsf A}\) with highest weights \(\mu\in P_\ell(\mathsf A)\)
as functions on \(T_{\mathsf A,\ell}^{\mathrm{reg}}(\C)\), then they form a basis of the space of all functions on \(T_{\mathsf A,\ell}^{\mathrm{reg}}(\C)/W_{\mathsf A}\)
\cite[Theorem~2.1, Lemma~4.6]{Hong2018}, and their products are linear combinations of them with integer coefficients \cite[Theorem~2.2]{Hong2018}.

\begin{definition}[{\cite[\S2]{Hong2018}}]
\label{def:fusion}
We call the subring of the ring of functions on \(T_{\mathsf A,\ell}^{\mathrm{reg}}(\C)/W_{\mathsf A}\)
\[
 \mathsf R_\ell(\mathsf A)=\bigoplus_{\mu\in P_\ell(\mathsf A)}\Z\chi_\mu
\]
the \emph{fusion ring of level \(\ell\)} of \(\mathsf A\).
\end{definition}

\subsection{The quotient of the regular part and the fusion ring}
\label{subsec:fusion-comparison}

The vertex set of \(\Aff_\theta(\Dcal^\vee)^\vee\) is \(\{0\}\sqcup I_\theta\) \cite[\S6.1, \S8.3]{Kac1990}.
Using the realization by loop algebras \cite[Theorems~7.4, 8.3]{Kac1990}, we identify \(P_{\Aff_\theta(\Dcal^\vee)^\vee}\) with \(P^\theta\) via the map sending the fundamental weight \(\omega_o\) of a vertex \(o\in I_\theta\) to \(\sum_{i\in o}\omega_i\) \cite[\S7.4, \S8.3]{Kac1990}.
Under this identification, we write \(\Psi\subset P^\theta\) for the root system of the finite part of \(\Aff_\theta(\Dcal^\vee)^\vee\).
The group \(W^\theta\) acts as the Weyl group of \(\Psi\) \cite[Lemma~2.7, \S10]{Steinberg1959}.

We write \(e^\mu\) for the function on \(T\) corresponding to \(\mu\in P\).
When \(\mu\in P^\theta\), the function \(e^\mu\) defines a function on \(T_\theta\), and this identifies \(P^\theta\) with the character group of \(T_\theta\).
For \(t\in T\), the element \(t\theta\) is regular if and only if \(e^\psi(t)\ne1\) for all \(\psi\in\Psi\)
\cite[2.11]{Steinberg1965}, \cite[Lemmas~2.7--2.8, Corollary~3.3]{Mohrdieck2000}.

\begin{lemma}
\label{lem:walls}
We have \(M_{\Aff_\theta(\Dcal^\vee)^\vee}=\Nm_\theta(Q)\).
\end{lemma}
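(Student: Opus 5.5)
The plan is to compare generators. Both sides are \(W^\theta\)-stable lattices in \(P^\theta\), and each is spanned by one explicit \(W^\theta\)-orbit together with the images of simple roots.

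\emph{The right-hand side.} Since \(\theta\) commutes with \(W^\theta\), the map \(\Nm_\theta\) is \(W^\theta\)-equivariant. Hence \(\Nm_\theta(Q)\) is \(W^\theta\)-stable and is spanned by the vectors \(\Nm_\theta(\alpha_i)\) for \(i\in I\), and these depend only on the orbit \(o=\langle\theta\rangle i\). Concretely:
\begin{itemize}
\item if \(o=\{i\}\), then \(\Nm_\theta(\alpha_i)=d\,\alpha_i\);
\item if \(|o|=d\), then \(\Nm_\theta(\alpha_i)=\sum_{j\in o}\alpha_j\).
\end{itemize}
When \(|o|\) is a proper divisor of \(d\), which only happens for \(D_4\) with \(\theta\) of order \(3\), the vector \(\Nm_\theta(\alpha_i)\) is a multiple of \(\sum_{j\in o}\alpha_j\).

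\emph{The left-hand side.} By \cite[\S6.5]{Kac1990}, \(M_{\mathsf A}\) for \(\mathsf A=\Aff_\theta(\Dcal^\vee)^\vee\) is the lattice spanned by the \(W_{\mathsf A}\)-orbit of \(\nu(\vartheta^\vee)/a_0\). Here \(\vartheta\) is the element with \(\delta=a_0\alpha_0+\vartheta\), and \(\nu\) is the normalized identification of coroots with roots. Equivalently, by \cite[(6.5.8)]{Kac1990}, \(M_{\mathsf A}\) is either the coroot lattice \(\nu(Q^\vee)\) or the root lattice of the finite part, according to the type. In both cases \(M_{\mathsf A}\) is spanned by the images of the simple (co)roots of the finite part. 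I will transport these generators through the identification \(\omega_o\mapsto\sum_{i\in o}\omega_i\) of \(P_{\mathsf A}\) with \(P^\theta\). Pairing against the coweights \(\sum_{i\in o}\omega_i^\vee\) reduces this to comparing the Cartan matrix of the finite part of \(\mathsf A\) with the matrix of pairings \(\langle\alpha_j^\vee,\Nm_\theta(\alpha_i)\rangle\). The folded Cartan matrix of \(\Dcal\) with respect to \(\theta\) is the transpose of the Cartan matrix of the finite part of \(\Aff_\theta(\Dcal^\vee)\). Dualizing, the pairings for \(\Nm_\theta(\alpha_i)\) should reproduce exactly the simple-root data of \(\Psi\) in the normalization that Kac uses for \(M_{\mathsf A}\). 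So I expect each \(\Nm_\theta(\alpha_i)\) to equal the \(M_{\mathsf A}\)-generator attached to the vertex \(o\).

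\emph{Main obstacle.} The hard step is the normalization bookkeeping. Kac's \(\nu\) is scaled so that \(\nu(\vartheta^\vee)\) relates to \(\vartheta\) by a factor depending on the type. The dualization \({}^\vee\) exchanges long and short roots, and the factors \(d\) versus \(|o|\) in \(\Nm_\theta\) must match Kac's choices. I will first verify this uniformly from the loop-algebra realization in \cite[\S7.4, \S8.3]{Kac1990}. If that does not close cleanly, the fallback is a direct check of the equality \(\Nm_\theta(\alpha_i)=\) (Kac's generator) for each row of Table~\ref{tab:types}, with extra care for \(A_{2r}\) with \(\theta\ne\id\) and for \(D_4\) with \(\theta\) of order \(3\). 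Once the generators agree, both lattices are spanned by the same vectors, and the equality \(M_{\mathsf A}=\Nm_\theta(Q)\) follows.
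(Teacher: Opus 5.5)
Your argument is correct, but it takes a different route from the paper. The paper's proof is purely a citation: \cite[(6.5.8)]{Kac1990} for \(\theta=\id\), and the proof of \cite[Proposition~3.8]{Wendt2001} for \(\theta\ne\id\).

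For \(\theta=\id\) you agree with the paper, and there is in fact nothing to compute. Here \(\mathsf A=\Aff_{\id}(\Dcal^\vee)^\vee\) is either simply-laced untwisted or twisted with \(a_0=1\), its finite part is \(\Dcal\) itself, and (6.5.8) gives \(M_{\mathsf A}=Q=\Nm_{\id}(Q)\).

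For \(\theta\ne\id\) you replace the appeal to Wendt by a vertex-by-vertex check. Here \(\mathsf A\) is \(B_r^{(1)},C_r^{(1)},F_4^{(1)},G_2^{(1)}\) or \(A_{2r}^{(2)}\), so (6.5.8) always lands in the branch \(M_{\mathsf A}=\nu(Q_{\mathsf A}^\vee)\), where \(Q^\vee_{\mathsf A}\) is the coroot lattice of the finite part. The identity to verify is \(\Nm_\theta(\alpha_i)=\nu(\psi_o^\vee)=2\psi_o/(\psi_o|\psi_o)\), with \(\psi_o\in\Psi\) the simple root at the orbit \(o\ni i\). This does hold in all five cases. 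For \(A_{2r-1}\), \(\Nm_\theta(\alpha_r)=2\alpha_r=\nu(\psi_r^\vee)\), with \(\psi_r=\alpha_r\) the short simple root of \(B_r\). For \(D_4\), \(\Nm_\theta(\alpha_2)=3\alpha_2=\nu(\psi^\vee)\), with \(\psi=\alpha_2\) the short simple root of \(G_2\).

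Your route buys a self-contained proof and the sharper statement that the two lattices have matching \(\Z\)-bases indexed by \(I_\theta\). It costs exactly the labeling and normalization bookkeeping that the paper delegates to Wendt.

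Three details in your sketch need correcting when you carry it out.

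\emph{The \(A_{2r}\) case.} The uniform ``folded Cartan matrix'' shortcut fails here. The middle orbit \(\{r,r+1\}\) consists of adjacent vertices, so \(\langle\alpha_r^\vee,\alpha_r+\alpha_{r+1}\rangle=1\), and the folded matrix is not a Cartan matrix. You must compute directly. Writing \(\varpi_o=\sum_{i\in o}\omega_i\) and \(\varpi_0=0\), one finds \(\Nm_\theta(\alpha_r)=\alpha_r+\alpha_{r+1}=\varpi_r-\varpi_{r-1}=\tfrac12\psi_r\). Here \(\psi_r=2\varpi_r-2\varpi_{r-1}\) is the long simple root of the finite part \(C_r\) of \(A_{2r}^{(2)}\) in Kac's labeling. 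Kac normalizes \((\vartheta|\vartheta)=2a_0=4\) in this type, so \(\nu(\psi_r^\vee)=\tfrac12\psi_r\) and the match survives. In fact both lattices are all of \(P^\theta\) here.

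\emph{Reading off coordinates.} To get coordinates in the basis \((\varpi_o)\), pair with \(\alpha_j^\vee\) for a single \(j\in o\). Pairing with \(\sum_{i\in o}\omega_i^\vee\) only returns root-basis coefficients.

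\emph{Orbit sizes.} Since \(d\in\{1,2,3\}\), every orbit has size \(1\) or \(d\), so the ``proper divisor'' case you attribute to \(D_4\) never arises.
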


\begin{proof}
This follows from \cite[(6.5.8)]{Kac1990} when \(\theta=\id\), and from \cite[proof of Proposition~3.8]{Wendt2001} when \(\theta\ne\id\).
\end{proof}

Since \(P_{\Aff_\theta(\Dcal^\vee)^\vee}=P^\theta\), we have \(T_\theta=T_{\Aff_\theta(\Dcal^\vee)^\vee}\otimes_\Q K\), and
by Lemma~\ref{lem:walls} and the formula for the character group in Proposition~\ref{prop:norm}, we have \(T_{\theta,\ell}=T_{\Aff_\theta(\Dcal^\vee)^\vee,\ell}\otimes_\Q K\).
By the criterion for regularity of \(t\theta\), we have \(T_{\theta,\ell}^{\mathrm{reg}}=T_{\Aff_\theta(\Dcal^\vee)^\vee,\ell}^{\mathrm{reg}}\otimes_\Q K\).

\begin{proposition}
\label{prop:descent}
The map sending Weyl characters to functions on \(T_{\theta,\ell}^{\mathrm{reg}}\) gives a \(K\)-algebra isomorphism
\[
 \mathsf R_\ell(\Aff_\theta(\Dcal^\vee)^\vee)\otimes_\Z K\xrightarrow{\sim}\mathcal O(T_{\theta,\ell}^{\mathrm{reg}})^{W^\theta}.
\]
This isomorphism is compatible with field extensions.
\end{proposition}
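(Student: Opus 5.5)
We plan to reduce everything to Hong's results over \(\C\) by working with the \(\Q\)-structure. Write \(\mathsf A=\Aff_\theta(\Dcal^\vee)^\vee\). By the identifications just established, \(T_{\theta,\ell}^{\mathrm{reg}}=T_{\mathsf A,\ell}^{\mathrm{reg}}\otimes_\Q K\), and \(W^\theta\) acts as \(W_{\mathsf A}\) (the Weyl group of \(\Psi\)). The action is compatible with the identification \(P_{\mathsf A}=P^\theta\) of character groups. So it suffices to construct a \(\Q\)-algebra isomorphism
\[
 \mathsf R_\ell(\mathsf A)\otimes_\Z\Q\xrightarrow{\sim}\mathcal O(T_{\mathsf A,\ell}^{\mathrm{reg}})^{W_{\mathsf A}}
\]
sending \(\chi_\mu\) to the restriction of the Weyl character, and then base change along \(\Q\to K\). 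Base change is harmless: taking \(W_{\mathsf A}\)-invariants commutes with flat base change (in characteristic zero the invariants are the image of the averaging idempotent). Compatibility with further field extensions follows for the same reason.

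First, we define the map. Each \(\chi_\mu\in\Z[P_{\mathsf A}]^{W_{\mathsf A}}\) is a \(W_{\mathsf A}\)-invariant regular function on \(T_{\mathsf A}\). Restricting it to the closed subscheme \(T_{\mathsf A,\ell}^{\mathrm{reg}}\) gives an element of \(\mathcal O(T_{\mathsf A,\ell}^{\mathrm{reg}})^{W_{\mathsf A}}\), and extending \(\Q\)-linearly gives
\[
 \Phi\colon\bigoplus_{\mu\in P_\ell(\mathsf A)}\Q\chi_\mu\to\mathcal O(T_{\mathsf A,\ell}^{\mathrm{reg}})^{W_{\mathsf A}}.
\]
Multiplicativity has to be checked inside \(\mathcal O(T_{\mathsf A,\ell}^{\mathrm{reg}})\). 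By Proposition~\ref{prop:norm} together with Lemma~\ref{lem:walls}, \(T_{\mathsf A,\ell}\) is finite étale, so \(\mathcal O(T_{\mathsf A,\ell}^{\mathrm{reg}})\otimes_\Q\C\) embeds into the ring of functions on the \(\C\)-points. In that ring, the products of the \(\chi_\mu\) are exactly the fusion products of Definition~\ref{def:fusion}, by Hong's Theorem~2.2. Hence \(\Phi\) is a ring homomorphism from \(\mathsf R_\ell(\mathsf A)\otimes\Q\). Since \(\chi_0=1\), it is unital.

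Second, we show \(\Phi\) is bijective after \(\otimes_\Q\C\). Because \(T_{\mathsf A,\ell}^{\mathrm{reg}}\otimes\C\) is a finite reduced scheme with a \(W_{\mathsf A}\)-action, \(\mathcal O(T_{\mathsf A,\ell}^{\mathrm{reg}})^{W_{\mathsf A}}\otimes\C\) is the ring of all functions on the finite set \(T_{\mathsf A,\ell}^{\mathrm{reg}}(\C)/W_{\mathsf A}\). Hong's Theorem~2.1 and Lemma~4.6 say the \(\chi_\mu\), \(\mu\in P_\ell(\mathsf A)\), form a basis of this space. So \(\Phi\otimes\C\) is an isomorphism, and by faithful flatness of \(\Q\to\C\), \(\Phi\) is an isomorphism over \(\Q\). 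Tensoring with \(K\) gives the claim.

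The main obstacle is bookkeeping rather than new mathematics. We must make sure that Hong's \(T_{\mathsf A,\ell}^{\mathrm{reg}}\) and regularity condition (\(e^\alpha\ne1\) for \(\alpha\in\Phi_{\mathsf A}\)) match our \(\Psi\) exactly under \(P_{\mathsf A}=P^\theta\). We must also check that the Weyl group action on characters matches, since the twisted cases use a non-obvious identification of the finite root system. These matchings were arranged just before the statement via Lemma~\ref{lem:walls} and the regularity criterion, so we only need to cite them carefully. We also need to note that Hong's statements, phrased over \(\C\) as functions on points, suffice because the scheme is reduced.
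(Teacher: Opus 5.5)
Your proposal is correct and follows essentially the same route as the paper. Both arguments define the \(\Q\)-linear restriction map on the \(\Q\)-form \(T^{\mathrm{reg}}_{\Aff_\theta(\Dcal^\vee)^\vee,\ell}\) and identify the map over \(K\) with its base change, using that finite-group invariants commute with field extension. Both then obtain the isomorphism over \(\C\) from Hong's basis theorem and descend multiplicativity, unitality and bijectivity along the faithfully flat extension \(\C/\Q\). Your observation that reducedness of the finite étale scheme makes \(\mathcal O\otimes_\Q\C\) the ring of functions on \(\C\)-points spells out the step that the paper leaves implicit in its appeal to Definition~\ref{def:fusion}.
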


\begin{proof}
Since \(\chi_\mu\in\Z[P^\theta]^{W^\theta}\), sending the basis element \(\chi_\mu\) to \(\chi_\mu\) on \(T_{\Aff_\theta(\Dcal^\vee)^\vee,\ell}^{\mathrm{reg}}\)
defines a \(\Q\)-linear map \(f\).
Since taking invariants of a finite group commutes with extension of the base field, the map in the proposition is \(f\otimes_\Q K\) by \(T_{\theta,\ell}^{\mathrm{reg}}=T_{\Aff_\theta(\Dcal^\vee)^\vee,\ell}^{\mathrm{reg}}\otimes_\Q K\),
and \(f\otimes_\Q\C\) is an algebra isomorphism by Definition~\ref{def:fusion}.
The preservation of multiplication and of the unit, and the vanishing of the kernel and the cokernel, descend to \(f\) by the faithful flatness of \(\C/\Q\).
\end{proof}

\begin{proof}[Proof of Theorem~\ref{thm:main}]
Applying Proposition~\ref{prop:norm} and Proposition~\ref{prop:descent} in turn, we obtain
\[
 \Sigma_{c,d(\ell+h^\vee)}^{\mathrm{per}}
 \cong T_{\theta,\ell}^{\mathrm{reg}}/W^\theta
 \cong\Spec_K\bigl(\mathsf R_\ell(\Aff_\theta(\Dcal^\vee)^\vee)\otimes_\Z K\bigr).
\]
Combining this with Proposition~\ref{prop:cut} and the equivalence compatible with rotation in Lemma~\ref{lem:appendix-cut-rotation}, we obtain the main theorem.
\end{proof}

\subsection{Parametrization of fixed points}
\label{subsec:twining}

In this section, let \(K=\C\).
We write \(\rho\) for the sum of the fundamental weights of \(G\), \(\rho^\vee\) for the sum of the fundamental coweights, \(P^\vee\subset X_*(T)\otimes\Q\) for the coweight lattice, and \(Q^\vee\) for the coroot lattice, and
define \(\Nm_\theta\) on \(P^\vee\) by the same formula as on \(P\).
For \(\lambda\in P_\ell(\Aff_\theta(\Dcal)^\vee)\), we write \(\tau_\lambda\) for the image in \(T_\theta(\C)\) of the element \(t_\lambda\) of Corollary~\ref{cor:points}.

\begin{lemma}
\label{lem:duality}
The map \(\lambda\mapsto[\tau_\lambda]\) is a bijection from \(P_\ell(\Aff_\theta(\Dcal)^\vee)\) to \((T_{\theta,\ell}^{\mathrm{reg}}/W^\theta)(\C)\).
\end{lemma}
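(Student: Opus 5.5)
We identify \((T_{\theta,\ell}^{\mathrm{reg}}/W^\theta)(\C)\) with a quotient of a real vector space by an affine Weyl group, and then use the standard fact that the regular orbits of such a group meet the interior of the fundamental alcove in exactly one point.

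First, \(T_\theta(\C)\) has character group \(P^\theta\). So it is the torus \(\mathrm{Hom}(P^\theta,\C^\times)\), and by \(\exp\) it is a quotient of \(\mathrm{Hom}(P^\theta,\C)=(P^\vee)_\theta\otimes\C\), the coinvariants. Choosing \(\theta\)-invariant representatives, this is \((P^\vee)^\theta\otimes\C\) modulo the lattice \(\mathrm{Hom}(P^\theta,\Z)\). Under the pairing with \(P^\theta\), this lattice is the image of \(P^\vee\) under averaging, \(\frac1d\Nm_\theta\). An element \(\tau=\exp(2\pi\sqrt{-1}x)\) lies in \(T_{\theta,\ell}\) if and only if \((\ell+h^\vee)\langle \Nm_\theta(q),x\rangle\in\Z\) for all \(q\in Q\), by Proposition~\ref{prop:norm}. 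Using Lemma~\ref{lem:walls}, we have \(\Nm_\theta(Q)=M\), the translation lattice of \(\mathsf A=\Aff_\theta(\Dcal^\vee)^\vee\). The condition therefore says \(x\in\frac1{\ell+h^\vee}M^*\), where \(M^*\subset(P^\theta)^*\) is the dual lattice of \(M\).

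Second, I identify \(M^*\) and the relevant affine Weyl group with the dual side. Affine Weyl group duality (Kac \S6.5, Wendt for the twisted case) says that \(M^*\) is the weight lattice of the finite part of \(\Aff_\theta(\Dcal)^\vee\). That weight lattice is realized inside \((P^\vee)^\theta\otimes\Q\) as \(\theta\)-invariant coweights, rescaled by \(1/d\) from the averaging. Moreover, \(W^\theta\ltimes \mathrm{Hom}(P^\theta,\Z)\) is the affine Weyl group whose alcove walls are the hyperplanes \(\langle\psi,x\rangle\in\Z\) for \(\psi\in\Psi\). Hence \(T_{\theta,\ell}(\C)/W^\theta\) is in bijection with \(\bigl(\frac1{\ell+h^\vee}M^*\bigr)\) modulo this affine Weyl group. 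The regular locus, \(e^\psi\neq1\) for all \(\psi\in\Psi\), corresponds exactly to points off all walls.

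Third, I scale by \(\ell+h^\vee\). Points of \(M^*\) strictly inside the dilated alcove \((\ell+h^\vee)\overline{A}\) are exactly \(\lambda+\rho^\vee\) with \(\lambda\in P_\ell(\Aff_\theta(\Dcal)^\vee)\). This holds because the highest root of \(\Psi\), measured by the dual Kac labels of \(\Aff_\theta(\Dcal)^\vee\), pairs with \(\rho^\vee\) to give \(h^\vee-1\). This uses Lemma~\ref{lem:appendix-full-twist}, which identifies \(h^\vee\) with the dual Coxeter number (the paper asserts this for \(\Aff_\theta(\Dcal^\vee)^\vee\)). Since the affine Weyl group acts simply transitively on alcoves, every regular orbit meets the open alcove exactly once, which gives bijectivity. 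The minus sign and the factor \(d\) in \(t_\lambda\) then need to be matched against this picture. The sign is harmless because \(-1\) composed with \(w_0\) preserves the orbit set, or alternatively one uses \(x\mapsto -x\) together with \(\lambda\mapsto\lambda^*\). More carefully, the class of \(t_\lambda\) in \(T_\theta\) only sees the coinvariant image \(\Nm_\theta(\lambda+\rho^\vee)/d\), and the \(1/d\) in the definition of \(t_\lambda\) accounts for exactly this.

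The main obstacle is the second step in the twisted case. One must verify that \(M^*\), with the normalization coming from coinvariants and \(\Nm_\theta\), really is the coweight lattice of the finite part of \(\Aff_\theta(\Dcal)^\vee\). One must also check that the dual Kac labels and the value \(\langle\theta_{\mathrm{high}},\rho^\vee\rangle\) come out so that the shift is exactly \(\rho^\vee\) with denominator \(d(\ell+h^\vee)\). I would first check this against Table~\ref{tab:types} and Wendt's description. If needed, I would fall back on a type-by-type verification for the four twisted families.
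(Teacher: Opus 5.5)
Your route is the paper's. You parametrize \(T_{\theta,\ell}(\C)\) through \(\exp\) using the character group of Proposition~\ref{prop:norm} and Lemma~\ref{lem:walls}, view \(W^\theta\)-orbits as orbits of an affine Weyl group with walls \(\langle\psi,\cdot\rangle\in\Z\), and take the unique representative in the open dilated alcove, with the \(\rho^\vee\)-shift coming from \(\sum_j a_j^\vee=h^\vee\) and \(a_0^\vee=1\). The outline is sound, but three points need correcting or completing.

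First, \(\operatorname{Hom}(P^\theta,\Z)\) is \(\tfrac1d\Nm_\theta(Q^\vee)\), not the averaged image of \(P^\vee\). Since \(G\) is simply connected, \(X_*(T)=Q^\vee\), and the basis dual to the orbit sums \(\sum_{i\in o}\omega_i\) is \(\tfrac1{|o|}\sum_{i\in o}\alpha_i^\vee=\tfrac1d\Nm_\theta(\alpha_i^\vee)\) for any \(i\in o\). This is not cosmetic. With \(P^\vee\) you would get an extended affine Weyl group, whose alcove stabilizer is nontrivial (\(\cong P^\vee/Q^\vee\) when \(\theta=\id\)), so uniqueness of the alcove representative would fail. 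With \(Q^\vee\) the group is exactly the Coxeter group you then use.

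Second, you have \(M^*=\tfrac1d(P^\vee)^\theta\), because \(\langle\Nm_\theta(\alpha_i),\sum_{j\in o'}\omega_j^\vee\rangle=d\,\delta_{o,o'}\) for \(i\in o\). So the lattice points of \(M^*\) in the open alcove dilated by \(\ell+h^\vee\) are \((\lambda+\rho^\vee)/d\), not \(\lambda+\rho^\vee\). Equivalently, they are the points \(\lambda+\rho^\vee\in(P^\vee)^\theta\) off the walls \(\langle\psi,\lambda\rangle\in d(\ell+h^\vee)\Z\), exactly as in the paper. This rescaling is the source of the \(d\) in \(t_\lambda\). Your explanation via the coinvariant image \(\Nm_\theta(\lambda+\rho^\vee)/d\) is empty, because \(\lambda+\rho^\vee\) is already \(\theta\)-fixed.

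Third, your ``main obstacle'' needs no type-by-type check. The paper applies Section~\ref{subsec:fusion-comparison} and Lemma~\ref{lem:walls} to the dual root datum. This gives \(P_{\Aff_\theta(\Dcal)^\vee}=(P^\vee)^\theta\) and \(M_{\Aff_\theta(\Dcal)^\vee}=\Nm_\theta(Q^\vee)\), with \(\rho^\vee\) as the sum of the fundamental weights, and Kac's description of the fundamental alcove by dual Kac labels then applies directly. You also left open that \(h^\vee\) is the dual Coxeter number of \(\Aff_\theta(\Dcal)^\vee\); Lemma~\ref{lem:appendix-full-twist} only covers \(\Aff_\theta(\Dcal^\vee)^\vee\). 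This is immediate:
\begin{itemize}
\item when \(\theta\ne\id\), the diagram is simply laced, so \(\Dcal^\vee=\Dcal\) and the two affine diagrams coincide;
\item when \(\theta=\id\), both dual Coxeter numbers equal the Coxeter numbers of \(\Dcal\) and \(\Dcal^\vee\), which coincide.
\end{itemize}
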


\begin{proof}
Applying Section~\ref{subsec:fusion-comparison} and Lemma~\ref{lem:walls} to the dual root system,
we have \(P_{\Aff_\theta(\Dcal)^\vee}=(P^\vee)^\theta\) and \(M_{\Aff_\theta(\Dcal)^\vee}=\Nm_\theta(Q^\vee)\), and
the sum of the fundamental weights of the finite part is \(\rho^\vee\).
The dual Coxeter number of \(\Aff_\theta(\Dcal)^\vee\) is also equal to \(h^\vee\).
Indeed, if \(\theta\ne\id\), then \(\Dcal^\vee=\Dcal\), and
if \(\theta=\id\), then the dual Coxeter numbers of the two are equal to the Coxeter numbers of the untwisted affine diagrams of \(\Dcal\) and \(\Dcal^\vee\) \cite[\S6.1]{Kac1990}.

If we take as bases the sums of the fundamental weights and of the fundamental coweights over each orbit, then
\(\Nm_\theta(Q)\) and \(\Nm_\theta(Q^\vee)\) are \(d\) times the dual lattices of \((P^\vee)^\theta\) and \(P^\theta\), respectively.
Therefore, by the description of the character group in Proposition~\ref{prop:norm}, we obtain the \(W^\theta\)-equivariant isomorphism
\[
 (P^\vee)^\theta/(\ell+h^\vee)\Nm_\theta(Q^\vee)
 \xrightarrow{\sim}T_{\theta,\ell}(\C),\qquad
 \lambda\longmapsto\bigl[\exp(-2\pi\sqrt{-1}\lambda/d(\ell+h^\vee))\bigr].
\]
The reflection hyperplanes of \(W^\theta\ltimes(\ell+h^\vee)\Nm_\theta(Q^\vee)\) are
\(\langle\psi,\lambda\rangle=d(\ell+h^\vee)n\) for \(\psi\in\Psi\) and \(n\in\Z\) \cite[\S6.5--6.6]{Kac1990}.
By the criterion for regularity in the previous section and the description of the fundamental alcove of the affine Weyl group \cite[\S6.6]{Kac1990},
each orbit of regular points is uniquely represented by a lattice point in the interior of the fundamental alcove scaled by \(\ell+h^\vee\).
Since the sum of the dual Kac labels is \(h^\vee\) and \(a_0^\vee=1\), by Definition~\ref{def:level-weights}
these lattice points are \(\lambda+\rho^\vee\) for \(\lambda\in P_\ell(\Aff_\theta(\Dcal)^\vee)\), and they are mapped to \(\tau_\lambda\) by the above isomorphism.
\end{proof}

\begin{proof}[Proof of Corollary~\ref{cor:points}]
(1) By Proposition~\ref{prop:cut} and Proposition~\ref{prop:norm}, the coarse moduli map can be written on geometric points as
\[
 [(F,g)]\longmapsto\iota^{-1}(\pi(g\theta))\in
 (T_{\theta,\ell}^{\mathrm{reg}}/W^\theta)(\C).
\]
By Lemma~\ref{lem:duality}, for each \(\lambda\) there is, up to isomorphism, exactly one object \((F,g)\) whose image is \([\tau_\lambda]\).
For this object, the element \(g\theta\) is regular and \(\pi(g\theta)=\pi(t_\lambda\theta)\), so
by Lemma~\ref{lem:quotient-section}, \(g\theta\) is conjugate to \(t_\lambda\theta\).

(2) Extend \(V_\mu\) to a representation of \(G\rtimes\langle\theta\rangle\) so that \(\theta\) fixes a highest weight vector.
We call \(g\mapsto\operatorname{tr}(g\theta\mid V_\mu)\) the twining character \cite[\S4.2]{FuchsSchellekensSchweigert1996}.
By Jantzen's character formula \cite{Jantzen1973} (the ordinary character when \(\theta=\id\)),
we have \(\operatorname{tr}(t\theta\mid V_\mu)=\chi_\mu(t)\) for \(t\in T(\C)\).
By the isomorphism of Proposition~\ref{prop:descent} and the description of the regular conjugacy classes (Lemma~\ref{lem:quotient-section}),
the basis element \(\chi_\mu\) corresponds to \((F,g)\mapsto\operatorname{tr}(g\theta\mid V_\mu)\).

(3) For \(\mu\in P^\theta\), set \(A_\mu=\sum_{w\in W^\theta}\det(w)e^{w(\mu)}\).
Here \(\det(w)\) is the determinant of \(w\) on \(P^\theta\).
By the definition of \(t_\lambda\), the Weyl character formula, and the formula for the \(S\)-matrix \cite[Corollary~6.4, Theorems~13.8, 13.9]{Kac1990}, we obtain
\[
 \chi_\mu(\tau_\lambda)
 =\frac{A_{\mu+\rho}(\tau_\lambda)}{A_\rho(\tau_\lambda)}
 =\frac{S_{\mu\lambda}}{S_{0\lambda}}.
\]
\end{proof}

\section{Example}
\label{sec:example}

Let \(\Dcal=A_3\), \(\theta=(1\,3)\), and \(\ell=3\).
Then \(G=\mathrm{SL}_4(\C)\), \(c=12\), \(\beta=1232\), and
\(\Aff_\theta(\Dcal)^\vee=\Aff_\theta(\Dcal^\vee)^\vee=B_2^{(1)}\).
The dominant integral weights of level \(3\) are
\[
 P_3(\Aff_\theta(\Dcal)^\vee)
 =P_3(B_2^{(1)})
 =\Bigl\{a\omega_1+b\omega_2\Bigm|a,b\in\Z_{\ge0},\ a+b\le3\Bigr\},
\]
which consists of 10 elements.
We write \(\tau_\lambda\) for the image of the element
\(t_\lambda=\exp(-2\pi\sqrt{-1}(\lambda+\rho^\vee)/12)\) of Corollary~\ref{cor:points}
in \(T_\theta=T/(1-\theta)T\).
Setting \(r=2a+b+3\) and \(s=b+1\),
the values \(u=\chi_{\omega_1}(\tau_\lambda)\) and \(v=\chi_{\omega_2}(\tau_\lambda)\) are
\[
 u=1+2\cos\frac{\pi r}{6}+2\cos\frac{\pi s}{6},\qquad
 v=4\cos\frac{\pi r}{12}\cos\frac{\pi s}{12}.
\]
The corresponding elements of the cross-section and sequences of flags are given by
\[
 g_\lambda=x_1(u)\dot s_1x_2(v)\dot s_2,\qquad
 \gamma_\lambda=g_\lambda\theta=(\pi|_{\Sigma_c})^{-1}(\pi(t_\lambda\theta)),\qquad
 F_{\lambda,k}=\gamma_\lambda^kB.
\]
By Corollary~\ref{cor:points},
the map \(\lambda\mapsto[(F_\lambda,g_\lambda)]\) is a bijection onto the 10 isomorphism classes.

By the equivalence of Lemma~\ref{lem:appendix-cut-rotation} and
the left and right reflection maps of Appendix~\ref{subsec:appendix-cluster},
we map \(F_\lambda\) into the cell \(\Conf^{123212}_{232123}\).
On this cell, \(\rot^{-1}\) becomes
the time evolution of the level 3 restricted \(Y\)-system of \(B_2\), and
the mutation sets are
\[
 M_0=\{(1,2),(3,1),(2,1),(2,3),(2,5)\},\qquad
 M_1=\{(2,2),(2,4)\}.
\]
We mutate in the order \(M_0,M_1\), and
write the cluster Poisson coordinates \cite[Definitions~3.12--3.13]{ShenWeng2021} of the vertices immediately before each mutation
as \(X_1^{(1)},X_2^{(1)}\) for \((1,2),(3,1)\), respectively,
and as \(X_i^{(2)}\) for \((2,i)\).
Set \(z_i^{(a)}=(1+X_i^{(a)})^{-1}\).
The weights at which all seven \(z_i^{(a)}\) are defined are the six weights
\((a,b)=(0,0),(3,0),(0,1),(2,1),(0,2),(1,2)\).
By the fixed-point condition of the rotation, the \(z_i^{(a)}\) at these six weights
satisfy the following algebraic equations
\cite[Definition~1.7]{IIKKN2013B}:
\begin{equation}
\begin{gathered}
 \frac{(z_1^{(1)})^2}{z_2^{(1)}}
 =\frac{(1-z_1^{(1)})^2}{(1-z_1^{(2)})(1-z_2^{(2)})^2(1-z_3^{(2)})},\\
 \frac{(z_2^{(1)})^2}{z_1^{(1)}}
 =\frac{(1-z_2^{(1)})^2}{(1-z_3^{(2)})(1-z_4^{(2)})^2(1-z_5^{(2)})},\\
 \frac{(z_1^{(2)})^2}{z_2^{(2)}}=(1-z_1^{(2)})^2,\quad
 \frac{(z_2^{(2)})^2}{z_1^{(2)}z_3^{(2)}}
 =\frac{(1-z_2^{(2)})^2}{1-z_1^{(1)}},\quad
 \frac{(z_3^{(2)})^2}{z_2^{(2)}z_4^{(2)}}=(1-z_3^{(2)})^2,\\
 \frac{(z_4^{(2)})^2}{z_3^{(2)}z_5^{(2)}}
 =\frac{(1-z_4^{(2)})^2}{1-z_2^{(1)}},\quad
 \frac{(z_5^{(2)})^2}{z_4^{(2)}}=(1-z_5^{(2)})^2.
\end{gathered}
\label{eq:B2-nahm}
\end{equation}
At these six weights, the solutions can be written as
\begin{equation}
\begin{gathered}
 z_1^{(1)}=z_2^{(1)}=\frac{v^2-u}{u^2},\qquad
 z_1^{(2)}=z_5^{(2)}=\frac{u}{v^2},\\
 z_2^{(2)}=z_4^{(2)}=\frac{u^2}{(v^2-u)^2},\qquad
 z_3^{(2)}=\frac{u(u^2-v^2+u)}{v^2(v^2-2u)^2}.
\end{gathered}
\label{eq:B2-nahm-values}
\end{equation}
At the remaining four weights, \(v=0\) at \((0,3)\), \(v^2-u=0\) at \((1,0),(2,0)\), and
\(u=v=0\) at \((1,1)\); hence in each case some of the \(z_i^{(a)}\) are not defined by the above formulas.

\appendix
\section{Double Bott--Samelson cells and \texorpdfstring{\(Y\)}{Y}-systems}
\label{app:rotation}

The following presentation by cyclic flag configurations and the periodicity of the rotation
follow by almost the same argument that Shen--Weng use for \(\theta=\id\) \cite[\S2.3, \S5]{ShenWeng2021}.
For the reader's convenience, we give the construction and the main points of the proofs, including the case with a \(\theta\)-twist.

\subsection{Rotation}
\label{subsec:appendix-geometric-rotation}

For positive braids \(p,q\), we write \(\Conf^p_q\) for the
\emph{undecorated double Bott--Samelson cell} of \cite[Definition~2.20]{ShenWeng2021}, regarded as a stack.
We write \(\varnothing\) for the identity element of \(\operatorname{Br}^+\).
Writing \(X(p\Delta)\) for the braid variety associated with \(p\Delta\), we have the presentation \cite[\S3.7]{CasalsEtAl2025}
\begin{equation}
 \Conf^p_\varnothing\simeq[X(p\Delta)/T].
 \label{eq:braid-presentation}
\end{equation}
For this quotient, we use the action under which each \(t\in T\) sends every flag \(gB\) of a sequence of flags to \(t\cdot gB=tgB\).

For \(i\in I\), the reflection maps \(r^i:\Conf^{pi}_\varnothing\to\Conf^p_i\), \({}_ir:\Conf^p_i\to\Conf^{ip}_\varnothing\) are defined \cite[\S2.3]{ShenWeng2021}.
When the Demazure products of the words \(pi^*\), \(ip\) are both \(w_0\), we write \(R_i:X(pi^*)\xrightarrow{\sim}X(ip)\) for the rightward cyclic rotation \cite[Lemma~3.10]{CasalsEtAl2025}.
This map is defined over \(K\), is compatible with base change, and satisfies \(R_i(t\cdot F)=s_i(t)\cdot R_i(F)\).
By \(i\Delta=\Delta i^*\) and \eqref{eq:braid-presentation}, the map induced by \(R_i:X(pi\Delta)\xrightarrow{\sim}X(ip\Delta)\) on the quotient stacks corresponds to \({}_ir\circ r^i\).

Writing \(c=i_1\cdots i_r\), the automorphism defined in Section~\ref{sec:main} is \(\rot=R_{i_1}\circ\cdots\circ R_{i_r}\circ\theta\), and the above equivariance gives \(\rot(t\cdot F)=(c\theta)(t)\cdot\rot(F)\).
Therefore \(\rot\) induces an autoequivalence of \([X(\beta^\ell\Delta)/T]\), and via \eqref{eq:braid-presentation}, we use the same symbol on \(\Conf^{\beta^\ell}_\varnothing\) as well.
This construction is compatible with braid relations and does not depend on the choice of reduced words.
We write \(\len\) for the length function of \(W\).

\begin{lemma}
\label{lem:appendix-full-twist}
The number \(h^\vee\) is equal to the dual Coxeter number of \(\Aff_\theta(\Dcal^\vee)^\vee\), and the following equality holds in \(\operatorname{Br}^+\):
\begin{equation}
 \beta^{h^\vee}
 =\Delta^2.
 \label{eq:appendix-full-twist}
\end{equation}
\end{lemma}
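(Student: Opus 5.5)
Recall that \(h^\vee=\operatorname{ord}(c\theta)/d\) and \(\beta=c\,\theta(c)\cdots\theta^{d-1}(c)\). The plan has two halves: the numerical identity, and the braid identity.

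\textbf{Numerical identity.} Write \(h_\theta=\operatorname{ord}(c\theta)\). This is the twisted Coxeter number of Springer \cite{Springer1974}. By \cite[Lemma~7.4]{Springer1974} the eigenvalues of \(c\theta\) on \(P\otimes\C\) are \(e^{2\pi\sqrt{-1}m/h_\theta}\), where \(m\) runs over the \(\theta\)-twisted exponents, and \(\len(c)=r\). Since \(\beta\) is the product of \(d\) conjugates of \(c\), counting lengths gives the relation
\[
 \len(\beta)\cdot h_\theta/d=\len(w_0)\cdot 2=|\Phi|,
\]
provided the braid identity holds. I would therefore first check \(h_\theta/d\) against the dual Coxeter numbers of \(\Aff_\theta(\Dcal^\vee)^\vee\) by a case-by-case comparison with Table~\ref{tab:types}. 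This amounts to verifying \(d\,r\,h^\vee=|\Phi|\) in every row of the table. For example, \(A_{2r-1}\) with \(\theta\) of order \(2\) gives \(2r(2r-1)=2\cdot r\cdot(2r-1)\), and \(D_4\) with \(\theta\) of order \(3\) gives \(24=3\cdot2\cdot4\). Once \(\beta^{h^\vee}=\Delta^2\) is known, this identity follows automatically from additivity of length. So the genuinely needed input is that \(h^\vee\), defined as \(\operatorname{ord}(c\theta)/d\), equals \(|\Phi|/(dr)\). This follows from Springer's result that the regular element \(c\theta\) of order \(h_\theta\) acts freely on \(\Phi\) with orbits of size \(h_\theta\), and that there are \(r\) orbits up to the \(\theta\)-twist, i.e.\ \(|\Phi|=r\,h_\theta\). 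I would then match \(|\Phi|/(dr)\) with the sum of dual Kac labels in \cite[Tables~Aff~1--3]{Kac1990}; this is a finite check.

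\textbf{Braid identity.} Fix a reduced word of \(c\) and set \(\tilde c=c\theta\in\operatorname{Br}^+\rtimes\langle\theta\rangle\). Then
\[
 \beta^{h^\vee}=(c\theta)^{dh^\vee}\theta^{-dh^\vee}=\tilde c^{\,h_\theta}.
\]
The identity \(\tilde c^{\,h_\theta}=\Delta^2\) in the braid monoid extended by \(\theta\) is the twisted analogue of the classical fact that the \(h\)-th power of a Coxeter braid is the full twist (Brieskorn--Saito, Deligne). My approach is to use Bessis' theory of periodic elements, or more concretely the characterization of \(\Delta^2\) as the unique positive braid whose image in \(W\) is \(1\) and whose length is \(2\len(w_0)\), once we also know it is a reduced ``Garside'' power.

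Concretely, I would prove by induction on \(k\), for \(k\le h_\theta/2\) (or \(k=\lfloor h_\theta/2\rfloor\) with a half-step in the odd case), that the partial product \(c\,\theta(c)\cdots\) of the first \(kr\) letters of the infinite \(\theta\)-twisted word is reduced. This is the twisted version of the statement that \(c^{h/2}\) is reduced and equals \(w_0\) when \(h\) is even. The proof uses the bijection between the letters and the positive roots \(\Phi^+\) via the inversion sequence of \(c\theta\) (the standard \(c\)-sorting argument with orbits of \(c\theta\) on \(\Phi\)). The positive prefix of length \(|\Phi^+|\) is then a reduced word for \(w_0\), so it equals \(\Delta\) as a braid. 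The remaining half is the same word shifted by \(\Ad\) of that prefix, which is another reduced word for \(w_0\) up to the \({}^*\)-involution, hence again \(\Delta\), so the product is \(\Delta^2\).

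\textbf{Main obstacle.} The main obstacle is the reducedness of the length-\(|\Phi^+|\) prefix in the twisted case, especially when \(h_\theta\) is odd (e.g.\ \(A_{2r}\) with \(\theta\) of order \(2\), where \(h^\vee=2r+1\)), since there the split point falls in the middle of a copy of \(c\). For this I would first try the root-theoretic criterion: a word is reduced iff its inversion roots \(\alpha_{i_1},\,s_{i_1}\alpha_{i_2},\ldots\) are positive and distinct. I would then check positivity using the fact that the \(c\theta\)-orbits of the simple-root-type roots \(\gamma_o\) (for \(o\in I_\theta\)) sweep out \(\Phi^+\) before turning negative, exactly at step \(h_\theta/2\) in the appropriate twisted sense.
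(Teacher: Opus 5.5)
\textbf{Numerical half.} This part matches the paper. The element \(c\theta\) has \(r\) orbits of size \(m=\operatorname{ord}(c\theta)\) on \(\Phi\) (Coxeter for \(\theta=\id\), Springer otherwise). Hence \(rm=|\Phi|=2\len(w_0)\), and \(h^\vee=m/d\) is then matched with the dual Kac labels case by case.

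\textbf{Braid half: a genuine gap.} Your key step is that the first \(N=\len(w_0)\) letters of \(c\,\theta(c)\,\theta^2(c)\cdots\) form a reduced word. This is false for a general \(\theta\)-Coxeter element, and the lemma is needed for every \(c\): it feeds into Lemma~\ref{lem:appendix-cut-rotation} and hence Theorem~\ref{thm:main}.

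Take \(\theta=\id\), type \(A_3\), \(c=s_1s_2s_3\). Then \(m=h=4\) and \(N=6\), but \(s_1s_2s_3s_1s_2s_3\) is the permutation \(3412\), which has length \(4\). In your root language, the \(c\)-orbits through \(\gamma_1=\alpha_1\), \(\gamma_2=\alpha_1+\alpha_2\), \(\gamma_3=\alpha_1+\alpha_2+\alpha_3\) contain \(3\), \(2\) and \(1\) consecutive positive roots, not \(h/2=2\) each. So \(\Phi^+\) is not swept out uniformly at step \(m/2\).

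Here \(c^4=\Delta^2\) still holds, but only after commuting a letter: \(c^4=123121\cdot 323123\). So the factorization into two copies of \(\Delta\) exists only up to commutation, not as a prefix.

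A cleaner way to see what is at stake: a word of length \(N\) representing \(w_0\) is automatically reduced, which also settles your second half. So for even \(m\), your prefix claim is equivalent to \((c\theta)^{m/2}\theta^{-m/2}=w_0\) in \(W\). This holds for bipartite \(c\) when \(\theta=\id\) and \(h\) is even. But it depends on \(c\) itself, not only on its conjugacy class, and it fails in the example above.

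\textbf{What is missing: a transfer step.} You could establish \((c_0\theta)^m=\Delta^2\) for one well-chosen \(c_0\). You would then need that \(c\theta\) and \(c_0\theta\) are conjugate in \(\operatorname{Br}\rtimes\langle\theta\rangle\), for instance via cyclic shifts \(s_ic'\theta=s_i\,(c's_{\theta(i)}\theta)\,s_i^{-1}\) together with commutations. Finally you would use that \(\Delta^2\) is central and \(\theta\)-invariant.

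This is exactly the structure of the paper's proof, which takes both ingredients from He--Nie:
\begin{enumerate}
\item a minimal-length conjugate \(v\theta\) with \((v\theta)^m\in\Delta^2\operatorname{Br}^+\), upgraded to equality by the length count \(mr=2\len(w_0)\);
\item conjugacy in the braid group of minimal-length elements of the same conjugacy class.
\end{enumerate}

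\textbf{Two smaller corrections.}
\begin{enumerate}
\item The odd case you worry about does not occur in the twisted setting. For \(A_{2r}\) with \(\theta\) of order \(2\), \(m=2(2r+1)\) is even; you conflated \(h^\vee\) with \(m\). In fact \(m\) is odd only for untwisted \(A_{2n}\).
\item \(\Delta^2\) is not characterized as a positive braid of length \(2\len(w_0)\) with trivial image in \(W\). For example, in type \(A_2\), \(s_1^2s_2^2s_1^2\) has these properties but \(s_1^2s_2^2s_1^2\ne\Delta^2\), since that word admits no braid move.
\end{enumerate}
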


\begin{proof}
Set \(m=\operatorname{ord}(c\theta)=dh^\vee\).
If \(\theta=\id\), then \(rm=|\Phi|\) \cite[\S4]{Coxeter1951},
and if \(\theta\ne\id\), then \(c\theta\) partitions \(\Phi\) into \(r\) orbits of length \(m\) \cite[\S7]{Springer1974}.
Hence \(rm=|\Phi|=2\len(w_0)\).
Moreover, comparing these orders \cite[Table~2]{Coxeter1951}, \cite[\S7]{Springer1974} with
the tables of dual Kac labels \cite[Tables~Aff~1--3]{Kac1990},
we obtain \(h^\vee=h^\vee_{\Aff_\theta(\Dcal^\vee)^\vee}\).

By Section~\ref{sec:fixed}, \(c\theta\) is elliptic, and the minimal length in its conjugacy class is \(r\).
Let \(\operatorname{Br}\) be the braid group containing \(\operatorname{Br}^+\).
By \cite[Proposition~5.3, \S5.4]{HeNie2012}, we can choose a conjugate \(v\theta\) of minimal length
such that \((v\theta)^m\in\Delta^2\operatorname{Br}^+\).
Since \(mr=2\len(w_0)\), comparing word lengths gives \((v\theta)^m=\Delta^2\).
Since \(v\theta\) and \(c\theta\) are conjugate in \(\operatorname{Br}\rtimes\langle\theta\rangle\) \cite[Theorem~3.2~(2)]{HeNie2012}
and \(\Delta^2\) belongs to its center, we obtain \((c\theta)^m=\Delta^2\).
The claim follows from \((c\theta)^d=\beta\) and the injectivity of the natural map from \(\operatorname{Br}^+\) to the braid group.
\end{proof}

\begin{lemma}
\label{lem:appendix-cut-rotation}
There is an equivalence of stacks
\begin{equation}
 \operatorname{cut}:
 \Conf^{\mathrm{cyc}}_{c,d(\ell+h^\vee)}
 \xrightarrow{\sim}\Conf^{\beta^\ell}_\varnothing
 \label{eq:appendix-cut}
\end{equation}
and with respect to this equivalence, we have
\(\rot\circ\operatorname{cut}\cong\operatorname{cut}\circ\rot\).
\end{lemma}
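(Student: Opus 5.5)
We construct \(\operatorname{cut}\) by cutting the cyclic configuration at one edge and opening it into a linear sequence. Set \(N=d(\ell+h^\vee)\). A point of \(C_{c,N}\) is a cyclic sequence \((F_k)_{k\in\Z/N\Z}\) with \(\pos(F_k,F_{k+1})=\theta^k(c)\). Reading the flags \(F_0,F_1,\dots,F_{d\ell r}\) along the word \(\beta^\ell\) (refined letter by letter into intermediate flags) gives the part of a configuration in \(\Conf^{\beta^\ell}_\varnothing\) along the top braid. The remaining arc \(F_{d\ell r},\dots,F_N=F_0\) has total word \(\beta^{h^\vee}=\Delta^2\) by Lemma~\ref{lem:appendix-full-twist}. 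In a generic configuration a \(\Delta^2\)-chain between two flags is recorded by a middle flag opposite to both endpoints, and this is exactly the extra datum of \(\Conf^p_\varnothing\) in Shen--Weng's description: a pair of opposite flags closing up the top chain. Concretely, I will define \(\operatorname{cut}\) by sending \(F\) to the chain \(F_0,\dots,F_{d\ell r}\) together with the flag \(F_{d\ell r+r'}\), where \(r'=\len(w_0)\) is the position halfway through the \(\Delta^2\)-arc. This uses the factorization \(\Delta^2=\Delta\cdot\Delta\) compatibly with the letters of \(\beta^{h^\vee}\). The intermediate flags of the two \(\Delta\)-halves are uniquely determined because the relative positions along reduced words of \(w_0\) are rigid (\(BwB\cdot Bw'B=Bww'B\) when lengths add). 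The map is \(G\)-equivariant, so it descends to the quotient stacks.

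To show that \(\operatorname{cut}\) is an equivalence, I will build the inverse. Given a point of \(\Conf^{\beta^\ell}_\varnothing\), i.e.\ a chain \(F_0\to\dots\to F_{d\ell r}\) along \(\beta^\ell\) and a flag \(F'\) opposite to both \(F_0\) and \(F_{d\ell r}\), the reduced words of \(\Delta\) read off from the letters of \(\beta^{h^\vee}\) determine unique chains \(F_{d\ell r}\to F'\) and \(F'\to F_0\), again by uniqueness of minimal galleries between flags in relative position \(w_0\). This reconstructs the full cyclic configuration. Both constructions are given by morphisms of schemes compatible with base change, since the unique intermediate flags are given by the morphisms \(BwB\cong U_w\times wB\). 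They are therefore mutually inverse equivalences of stacks. Before this, I will check that the letters of \(\beta^{h^\vee}\), split at length \(\len(w_0)\), really are two reduced words of \(w_0\). Since \(\beta^{h^\vee}=\Delta^2\) in \(\operatorname{Br}^+\) and \(\operatorname{Br}^+\) is cancellative with \(\Delta\) the lcm of the generators, every prefix of length \(\len(w_0)\) of a positive word representing \(\Delta^2\) is a reduced word of \(w_0\). Any choice of word for \(\Delta^2\) is related to another through braid moves, which give isomorphisms of braid varieties, so the construction is independent of the choice.

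For the compatibility with rotation, I will compare the two rotations step by step. On the cyclic side, \(\rot\) shifts indices by one block \(c\) and applies \(\theta\). On the cut side, \(\rot=R_{i_1}\circ\cdots\circ R_{i_r}\circ\theta\) moves the letters of \(c\) from the end of the \(\Delta\)-part to the front, one at a time. Each \(R_i\) corresponds to \({}_ir\circ r^i\), and this moves the cut point by one letter in the cyclic picture. This is the content of Shen--Weng's argument for \(\theta=\id\) \cite[\S2.3, \S5]{ShenWeng2021}, with \(\theta\) inserted. The only subtlety is the equivariance twist: \(\rot(g\cdot F)=\theta(g)\cdot\rot(F)\) on the cyclic side and \(\rot(t\cdot F)=(c\theta)(t)\cdot\rot(F)\) on the \(T\)-quotient side. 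The natural isomorphism \(\rot\circ\operatorname{cut}\cong\operatorname{cut}\circ\rot\) is then given by the group element (a lift of \(\dot c\) times \(\theta\)) that relates the normalizations of the two presentations, in which \(F_0=B\) and \(F'=w_0B\).

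I expect the main obstacle to be this last step: writing down the 2-isomorphism of stacks explicitly and matching the \(T\)-action twist \(c\theta\) with the \(G\)-action twist \(\theta\), in the normalization of \eqref{eq:braid-presentation}. My first attempt will be to verify it on points of \(X(\beta^\ell\Delta)\) with \(F_0=B\) and \(F'=w_0B\), tracking the representative \(\dot c\).
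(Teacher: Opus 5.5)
There is a genuine gap in how you produce the flag \(F'\), even though your overall plan matches the paper's: cut the cycle at the arc along \(\beta^{h^\vee}\), use \(\beta^{h^\vee}=\Delta^2\), fill in \(w_0\)-edges uniquely, then compare rotations.

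Your claim that every length-\(\len(w_0)\) prefix of a positive word representing \(\Delta^2\) is a reduced word of \(w_0\) is false. A left divisor of \(\Delta^2\) of length \(\len(w_0)\) need not be simple. For instance, in type \(A_2\), writing \(\sigma_i\) for the generators, \(\Delta^2=\sigma_1\Delta\sigma_1\sigma_2\). So the word \(112112\) represents \(\Delta^2\), but its prefix \(112\) is not reduced.

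The claim also fails for the word you actually use. Take \(G=\mathrm{SL}_4\), \(\theta=\id\), \(c=s_1s_2s_3\). Then \(h^\vee=4\), \(\len(w_0)=6\), and the length-\(6\) prefix of \((123)^4\) is \(123123\). Since \(c^2=(13)(24)\) has length \(4\), this word is not reduced. Its Demazure product is \(s_1s_2s_3s_1s_2\), of length \(5\).

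The relative position of the endpoints of a chain along a word is bounded in the Bruhat order by the Demazure product. Hence the flag you call \(F_{d\ell r+r'}\) is \emph{never} opposite to \(F_{d\ell r}\), and your map does not land in \(\Conf^{\beta^\ell}_\varnothing\). Your inverse, which fills in chains along the two ``halves'', is undefined for the same reason: the halves are not reduced words of \(w_0\). The claim does hold for special \(c\), e.g.\ bipartite Coxeter elements when \(\theta=\id\). However, the lemma is needed for every \(\theta\)-Coxeter element, since the main theorem is stated for all of them.

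The repair is what the paper means by moving the arc ``by the braid relation''. (Note that in \(C_{c,N}\) the index \(k\) counts blocks \(\theta^k(c)\), so the top chain ends at \(F_{d\ell}\), not \(F_{d\ell r}\).) The steps are:
\begin{enumerate}
\item Refine the arc \(F_{d\ell},\dots,F_N=F_0\) into a chain along the word \(\beta^{h^\vee}\).
\item Transport this chain along a fixed sequence of braid moves turning that word into \(\mathbf{w}_0\mathbf{w}_0'\), where \(\mathbf{w}_0,\mathbf{w}_0'\) are reduced words of \(w_0\). Each move replaces a subchain along one reduced word of a rank-two longest element by the unique chain with the same endpoints along the other. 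This is an isomorphism compatible with base change.
\item Take \(F'\) to be the middle flag of the \emph{transported} chain.
\end{enumerate}
Conversely, fill in the two \(w_0\)-edges uniquely, transport back, and read off the block flags. Each block is a chain along a reduced word of \(\theta^k(c)\), so the block flags have the required relative positions.

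You already invoke braid moves to argue independence of the chosen word; they must be applied to the arc itself. With this change, the rest of your outline agrees with the paper's argument. In particular, your rotation comparison is the paper's identity \(\theta(\beta)^\ell\Delta^2=\theta(\beta)^\ell ac^*\Delta=\theta(\beta)^\ell a\Delta c\) combined with uniqueness of chains along reduced words, which shifts the cut by one block.
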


\begin{proof}
Let \(N=d(\ell+h^\vee)\), and let \(F=(F_k)_{k\in\Z/N\Z}\) be a cyclic flag configuration.
Moving the part \(F_{d\ell},\dots,F_N=F_0\) lying along \(\beta^{h^\vee}\)
by the braid relation \eqref{eq:appendix-full-twist},
we obtain the point of \(\Conf^{\beta^\ell}_\varnothing\)
\[
 \begin{tikzpicture}[baseline=(current bounding box.center),>=Stealth]
  \node (a) at (0,0) {\(F_0\)};
  \node (b) at (3,0) {\(F_{d\ell}\)};
  \node (c) at (1.5,-1.3) {\(F'\)};
  \draw[->] (a) -- node[above] {\(\beta^\ell\)} (b);
  \draw[->] (b) -- node[below right] {\(w_0\)} (c);
  \draw[->] (c) -- node[below left] {\(w_0\)} (a);
 \end{tikzpicture}.
\]
Let \(\operatorname{cut}(F)\) be this point.
Since an edge of relative position \(w_0\) can be uniquely restored to a sequence of flags along a reduced word,
this gives an equivalence \(\operatorname{cut}\) of quotient stacks.

The word going once around the triangle of \(\operatorname{cut}(F)\) is \(\beta^\ell\Delta^2\), and we have
\[
 \beta^\ell\Delta^2
 \overset{\theta}{\longmapsto}\theta(\beta)^\ell\Delta^2
 =\theta(\beta)^\ell ac^*\Delta
 =\theta(\beta)^\ell a\Delta c.
\]
By the uniqueness of sequences of flags along reduced words, \(\rot\) on \(\operatorname{cut}(F)\) corresponds to \((F_k)_k\mapsto(\theta\cdot F_{k-1})_k\) on \(F\).
Since each operation is defined over \(K\) and compatible with base change,
we obtain \(\rot\circ\operatorname{cut}\cong\operatorname{cut}\circ\rot\).
\end{proof}

\begin{corollary}
\label{cor:appendix-periodicity}
On \([X(\beta^\ell\Delta)/T]\), we have \(\rot^{d(\ell+h^\vee)}\cong\id\).
\end{corollary}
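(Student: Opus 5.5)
The plan is to transport the statement through the equivalence \(\operatorname{cut}\) of Lemma~\ref{lem:appendix-cut-rotation}, where the corresponding claim is nearly tautological.

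Set \(N=d(\ell+h^\vee)\). Since \(\rot\circ\operatorname{cut}\cong\operatorname{cut}\circ\rot\), we have \(\rot^{N}\circ\operatorname{cut}\cong\operatorname{cut}\circ\rot^{N}\) by iterating the given 2-isomorphism. Combined with the presentation \eqref{eq:braid-presentation} of \(\Conf^{\beta^\ell}_\varnothing\) as \([X(\beta^\ell\Delta)/T]\), compatible with \(\rot\) by the construction in Section~\ref{subsec:appendix-geometric-rotation}, it therefore suffices to show \(\rot^N\cong\id\) on \(\Conf^{\mathrm{cyc}}_{c,N}\).

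On \(C_{c,N}\), the rotation is \(\rot(F)_k=\theta\cdot F_{k-1}\), so \(\rot^N(F)_k=\theta^N\cdot F_{k-N}\). Since \(d\mid N\), we have \(\theta^N=1\), and indices are taken modulo \(N\), so \(\rot^N=\id\) holds already on the scheme \(C_{c,N}\). The twisting of the \(G\)-action is \(\rot^N(g\cdot F)=\theta^N(g)\cdot\rot^N(F)=g\cdot F\), so the \(G\)-equivariant structure is also the identity. Hence the induced autoequivalence of the quotient stack is the identity, and we may take the identity 2-morphism.

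The main point to check is that the isomorphism \(\rot\circ\operatorname{cut}\cong\operatorname{cut}\circ\rot\) is a genuine natural isomorphism of functors of stacks, not merely an equality on isomorphism classes, since otherwise \(N\)-fold composition would not be meaningful. The proof of Lemma~\ref{lem:appendix-cut-rotation} supplies this: flags along reduced words are uniquely restored, so the comparison is canonical and compatible with base change. Given that, the corollary follows formally.
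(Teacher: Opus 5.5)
Your proposal is correct and follows essentially the same route as the paper. On \(\Conf^{\mathrm{cyc}}_{c,d(\ell+h^\vee)}\) the identity \(\rot^{d(\ell+h^\vee)}=\id\) holds by definition, because \(d\mid N\) and the indices are cyclic; the result then transfers to \([X(\beta^\ell\Delta)/T]\) through the equivalence \(\operatorname{cut}\) of Lemma~\ref{lem:appendix-cut-rotation}.
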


\begin{proof}
On \(\Conf^{\mathrm{cyc}}_{c,d(\ell+h^\vee)}\), we have \(\rot^{d(\ell+h^\vee)}\cong\id\) by definition, and
the claim follows from Lemma~\ref{lem:appendix-cut-rotation}.
\end{proof}

\subsection{Cluster coordinates and level restricted \texorpdfstring{\(Y\)}{Y}-systems}
\label{subsec:appendix-cluster}

In this subsection, we assume \(\ell\ge2\).
Shen--Weng \cite[Lemma~5.9, Corollary~5.10]{ShenWeng2021}
expressed the rotation as a sequence of mutations via flips of triangulations,
and proved the periodicity of the level \(\ell\) restricted \(Y\)-systems of type \(ADE\).
For general \((\Dcal,\theta)\) as well, the same construction expresses the rotation as a sequence of mutations.
In particular, Corollary~\ref{cor:appendix-periodicity} also gives the periodicity of the level restricted \(Y\)-systems of type \(BCFG\).

\begin{figure}[p]
\centering
\definecolor{cgfsame}{HTML}{A24F6C}
\definecolor{cgfsamefill}{HTML}{FBEFF3}
\definecolor{cgfdiff}{HTML}{565EA8}
\definecolor{cgfcorner}{HTML}{EEF0FA}
\definecolor{cgfstep}{HTML}{7C80A3}
\begin{tikzpicture}[x=1cm,y=1cm,>=Stealth,line width=.45pt,
 every node/.style={font=\small},
 qvertex/.style={circle,draw,fill=white,inner sep=.4pt,minimum size=4.3mm,font=\scriptsize},
 qarrow/.style={->,shorten >=.4pt,shorten <=.4pt},
 transition/.style={->,cgfstep},
 samediag/.style={cgfsame,line width=1pt},
 diffdiag/.style={cgfdiff,line width=1pt},
 cornertri/.style={fill=cgfcorner},
 mutvertex/.style={fill=cgfsamefill,draw=cgfsame,line width=1.2pt}]

\begin{scope}[yshift=-0.000cm]
\draw (0,0.6)--(6,0.6)--(6,-0.6)--(0,-0.6)--cycle;
\draw (1,0.6)--(0,-0.6);
\draw (1,0.6)--(1,-0.6);
\draw (2,0.6)--(1,-0.6);
\draw (2,0.6)--(2,-0.6);
\draw (3,0.6)--(2,-0.6);
\draw (3,0.6)--(3,-0.6);
\draw (4,0.6)--(3,-0.6);
\draw (4,0.6)--(4,-0.6);
\draw (5,0.6)--(4,-0.6);
\draw (5,0.6)--(5,-0.6);
\draw (6,0.6)--(5,-0.6);
\node[inner sep=0pt,font=\footnotesize] at (0.319,0.281) {$ 1 $};
\node[inner sep=0pt,font=\footnotesize] at (0.681,-0.281) {$ 2 $};
\node[inner sep=0pt,font=\footnotesize] at (1.319,0.281) {$ 2 $};
\node[inner sep=0pt,font=\footnotesize] at (1.681,-0.281) {$ 3 $};
\node[inner sep=0pt,font=\footnotesize] at (2.319,0.281) {$ 3 $};
\node[inner sep=0pt,font=\footnotesize] at (2.681,-0.281) {$ 2 $};
\node[inner sep=0pt,font=\footnotesize] at (3.319,0.281) {$ 2 $};
\node[inner sep=0pt,font=\footnotesize] at (3.681,-0.281) {$ 1 $};
\node[inner sep=0pt,font=\footnotesize] at (4.319,0.281) {$ 1 $};
\node[inner sep=0pt,font=\footnotesize] at (4.681,-0.281) {$ 2 $};
\node[inner sep=0pt,font=\footnotesize] at (5.319,0.281) {$ 2 $};
\node[inner sep=0pt,font=\footnotesize] at (5.681,-0.281) {$ 3 $};
\fill (0,0.6) circle (.65pt); \fill (0,-0.6) circle (.65pt);
\fill (1,0.6) circle (.65pt); \fill (1,-0.6) circle (.65pt);
\fill (2,0.6) circle (.65pt); \fill (2,-0.6) circle (.65pt);
\fill (3,0.6) circle (.65pt); \fill (3,-0.6) circle (.65pt);
\fill (4,0.6) circle (.65pt); \fill (4,-0.6) circle (.65pt);
\fill (5,0.6) circle (.65pt); \fill (5,-0.6) circle (.65pt);
\fill (6,0.6) circle (.65pt); \fill (6,-0.6) circle (.65pt);
\node[anchor=west] at (12.150,0) {$ \mathcal Q $};
\node[qvertex] (r0v0) at (8.600,0.800) {$ a_{1} $};
\node[qvertex] (r0v1) at (10.600,0.800) {$ a_{2} $};
\node[qvertex] (r0v2) at (8.600,-0.800) {$ c_{1} $};
\node[qvertex] (r0v3) at (10.600,-0.800) {$ c_{2} $};
\node[qvertex] (r0v4) at (7.600,0.000) {$ b_{1} $};
\node[qvertex] (r0v5) at (8.600,0.000) {$ b_{2} $};
\node[qvertex] (r0v6) at (9.600,0.000) {$ b_{3} $};
\node[qvertex] (r0v7) at (10.600,0.000) {$ b_{4} $};
\node[qvertex] (r0v8) at (11.600,0.000) {$ b_{5} $};
\draw[qarrow] (r0v0) to (r0v4);
\draw[qarrow] (r0v0) to (r0v6);
\draw[qarrow] (r0v1) to (r0v0);
\draw[qarrow] (r0v2) to (r0v3);
\draw[qarrow] (r0v3) to (r0v6);
\draw[qarrow] (r0v3) to (r0v8);
\draw[qarrow] (r0v4) to (r0v5);
\draw[qarrow] (r0v5) to (r0v0);
\draw[qarrow] (r0v5) to (r0v2);
\draw[qarrow] (r0v6) to (r0v5);
\draw[qarrow] (r0v6) to (r0v7);
\draw[qarrow] (r0v7) to (r0v1);
\draw[qarrow] (r0v7) to (r0v3);
\draw[qarrow] (r0v8) to (r0v7);
\draw[transition] (3,-0.750)--(3,-1.750);
\node[anchor=west,font=\scriptsize] at (3.15,-1.250) {$\theta$};
\draw[transition] (11.950,-0.950) .. controls (12.200,-1.100) and (12.200,-1.400) .. (11.950,-1.550);
\end{scope}
\begin{scope}[yshift=-2.500cm]
\draw (0,0.6)--(6,0.6)--(6,-0.6)--(0,-0.6)--cycle;
\draw[diffdiag] (1,0.6)--(0,-0.6);
\draw (1,0.6)--(1,-0.6);
\draw[diffdiag] (2,0.6)--(1,-0.6);
\draw (2,0.6)--(2,-0.6);
\draw[diffdiag] (3,0.6)--(2,-0.6);
\draw (3,0.6)--(3,-0.6);
\draw[diffdiag] (4,0.6)--(3,-0.6);
\draw (4,0.6)--(4,-0.6);
\draw[diffdiag] (5,0.6)--(4,-0.6);
\draw (5,0.6)--(5,-0.6);
\draw[diffdiag] (6,0.6)--(5,-0.6);
\node[inner sep=0pt,font=\footnotesize] at (0.319,0.281) {$ 3 $};
\node[inner sep=0pt,font=\footnotesize] at (0.681,-0.281) {$ 2 $};
\node[inner sep=0pt,font=\footnotesize] at (1.319,0.281) {$ 2 $};
\node[inner sep=0pt,font=\footnotesize] at (1.681,-0.281) {$ 1 $};
\node[inner sep=0pt,font=\footnotesize] at (2.319,0.281) {$ 1 $};
\node[inner sep=0pt,font=\footnotesize] at (2.681,-0.281) {$ 2 $};
\node[inner sep=0pt,font=\footnotesize] at (3.319,0.281) {$ 2 $};
\node[inner sep=0pt,font=\footnotesize] at (3.681,-0.281) {$ 3 $};
\node[inner sep=0pt,font=\footnotesize] at (4.319,0.281) {$ 3 $};
\node[inner sep=0pt,font=\footnotesize] at (4.681,-0.281) {$ 2 $};
\node[inner sep=0pt,font=\footnotesize] at (5.319,0.281) {$ 2 $};
\node[inner sep=0pt,font=\footnotesize] at (5.681,-0.281) {$ 1 $};
\fill (0,0.6) circle (.65pt); \fill (0,-0.6) circle (.65pt);
\fill (1,0.6) circle (.65pt); \fill (1,-0.6) circle (.65pt);
\fill (2,0.6) circle (.65pt); \fill (2,-0.6) circle (.65pt);
\fill (3,0.6) circle (.65pt); \fill (3,-0.6) circle (.65pt);
\fill (4,0.6) circle (.65pt); \fill (4,-0.6) circle (.65pt);
\fill (5,0.6) circle (.65pt); \fill (5,-0.6) circle (.65pt);
\fill (6,0.6) circle (.65pt); \fill (6,-0.6) circle (.65pt);
\node[anchor=west] at (12.150,0) {$ \theta\mathcal Q $};
\node[qvertex] (r1v0) at (8.600,0.800) {$ a_{1} $};
\node[qvertex] (r1v1) at (10.600,0.800) {$ a_{2} $};
\node[qvertex] (r1v2) at (8.600,-0.800) {$ c_{1} $};
\node[qvertex] (r1v3) at (10.600,-0.800) {$ c_{2} $};
\node[qvertex] (r1v4) at (7.600,0.000) {$ b_{1} $};
\node[qvertex] (r1v5) at (8.600,0.000) {$ b_{2} $};
\node[qvertex] (r1v6) at (9.600,0.000) {$ b_{3} $};
\node[qvertex] (r1v7) at (10.600,0.000) {$ b_{4} $};
\node[qvertex] (r1v8) at (11.600,0.000) {$ b_{5} $};
\draw[qarrow] (r1v0) to (r1v1);
\draw[qarrow] (r1v1) to (r1v6);
\draw[qarrow] (r1v1) to (r1v8);
\draw[qarrow] (r1v2) to (r1v4);
\draw[qarrow] (r1v2) to (r1v6);
\draw[qarrow] (r1v3) to (r1v2);
\draw[qarrow] (r1v4) to (r1v5);
\draw[qarrow] (r1v5) to (r1v0);
\draw[qarrow] (r1v5) to (r1v2);
\draw[qarrow] (r1v6) to (r1v5);
\draw[qarrow] (r1v6) to (r1v7);
\draw[qarrow] (r1v7) to (r1v1);
\draw[qarrow] (r1v7) to (r1v3);
\draw[qarrow] (r1v8) to (r1v7);
\draw[transition] (3,-0.750)--(3,-1.750);
\node[anchor=west,font=\scriptsize] at (3.15,-1.250) {6 flips with different labels};
\draw[transition] (11.950,-0.950) .. controls (12.200,-1.100) and (12.200,-1.400) .. (11.950,-1.550);
\end{scope}
\begin{scope}[yshift=-5.000cm]
\fill[cornertri] (0,-0.6)--(1,-0.6)--(0,0.6);
\fill[cornertri] (5,0.6)--(6,0.6)--(6,-0.6);
\draw (0,0.6)--(6,0.6)--(6,-0.6)--(0,-0.6)--cycle;
\draw (0,0.6)--(1,-0.6);
\draw (1,0.6)--(1,-0.6);
\draw (1,0.6)--(2,-0.6);
\draw (2,0.6)--(2,-0.6);
\draw (2,0.6)--(3,-0.6);
\draw (3,0.6)--(3,-0.6);
\draw (3,0.6)--(4,-0.6);
\draw (4,0.6)--(4,-0.6);
\draw (4,0.6)--(5,-0.6);
\draw (5,0.6)--(5,-0.6);
\draw (5,0.6)--(6,-0.6);
\node[inner sep=0pt,font=\footnotesize] at (0.319,-0.281) {$ 2 $};
\node[inner sep=0pt,font=\footnotesize] at (0.681,0.281) {$ 3 $};
\node[inner sep=0pt,font=\footnotesize] at (1.319,-0.281) {$ 1 $};
\node[inner sep=0pt,font=\footnotesize] at (1.681,0.281) {$ 2 $};
\node[inner sep=0pt,font=\footnotesize] at (2.319,-0.281) {$ 2 $};
\node[inner sep=0pt,font=\footnotesize] at (2.681,0.281) {$ 1 $};
\node[inner sep=0pt,font=\footnotesize] at (3.319,-0.281) {$ 3 $};
\node[inner sep=0pt,font=\footnotesize] at (3.681,0.281) {$ 2 $};
\node[inner sep=0pt,font=\footnotesize] at (4.319,-0.281) {$ 2 $};
\node[inner sep=0pt,font=\footnotesize] at (4.681,0.281) {$ 3 $};
\node[inner sep=0pt,font=\footnotesize] at (5.319,-0.281) {$ 1 $};
\node[inner sep=0pt,font=\footnotesize] at (5.681,0.281) {$ 2 $};
\fill (0,0.6) circle (.65pt); \fill (0,-0.6) circle (.65pt);
\fill (1,0.6) circle (.65pt); \fill (1,-0.6) circle (.65pt);
\fill (2,0.6) circle (.65pt); \fill (2,-0.6) circle (.65pt);
\fill (3,0.6) circle (.65pt); \fill (3,-0.6) circle (.65pt);
\fill (4,0.6) circle (.65pt); \fill (4,-0.6) circle (.65pt);
\fill (5,0.6) circle (.65pt); \fill (5,-0.6) circle (.65pt);
\fill (6,0.6) circle (.65pt); \fill (6,-0.6) circle (.65pt);
\node[anchor=west] at (12.150,0) {$ \theta\mathcal Q $};
\node[qvertex] (r2v0) at (8.600,0.800) {$ a_{1} $};
\node[qvertex] (r2v1) at (10.600,0.800) {$ a_{2} $};
\node[qvertex] (r2v2) at (8.600,-0.800) {$ c_{1} $};
\node[qvertex] (r2v3) at (10.600,-0.800) {$ c_{2} $};
\node[qvertex] (r2v4) at (7.600,0.000) {$ b_{1} $};
\node[qvertex] (r2v5) at (8.600,0.000) {$ b_{2} $};
\node[qvertex] (r2v6) at (9.600,0.000) {$ b_{3} $};
\node[qvertex] (r2v7) at (10.600,0.000) {$ b_{4} $};
\node[qvertex] (r2v8) at (11.600,0.000) {$ b_{5} $};
\draw[qarrow] (r2v0) to (r2v1);
\draw[qarrow] (r2v1) to (r2v6);
\draw[qarrow] (r2v1) to (r2v8);
\draw[qarrow] (r2v2) to (r2v4);
\draw[qarrow] (r2v2) to (r2v6);
\draw[qarrow] (r2v3) to (r2v2);
\draw[qarrow] (r2v4) to (r2v5);
\draw[qarrow] (r2v5) to (r2v0);
\draw[qarrow] (r2v5) to (r2v2);
\draw[qarrow] (r2v6) to (r2v5);
\draw[qarrow] (r2v6) to (r2v7);
\draw[qarrow] (r2v7) to (r2v1);
\draw[qarrow] (r2v7) to (r2v3);
\draw[qarrow] (r2v8) to (r2v7);
\draw[transition] (3,-0.750)--(3,-1.750);
\node[anchor=west,font=\scriptsize] at (3.15,-1.250) {left and right reflections};
\draw[transition] (11.950,-0.950) .. controls (12.200,-1.100) and (12.200,-1.400) .. (11.950,-1.550);
\end{scope}
\begin{scope}[yshift=-7.500cm]
\draw (0,0.6)--(6,0.6)--(6,-0.6)--(0,-0.6)--cycle;
\draw (1,0.6)--(0,-0.6);
\draw[diffdiag] (2,0.6)--(0,-0.6);
\draw (2,0.6)--(1,-0.6);
\draw (3,0.6)--(1,-0.6);
\draw (3,0.6)--(2,-0.6);
\draw[diffdiag] (4,0.6)--(2,-0.6);
\draw (4,0.6)--(3,-0.6);
\draw (5,0.6)--(3,-0.6);
\draw (5,0.6)--(4,-0.6);
\draw[diffdiag] (6,0.6)--(4,-0.6);
\draw (6,0.6)--(5,-0.6);
\node[inner sep=0pt,font=\footnotesize] at (0.319,0.281) {$ 2 $};
\node[inner sep=0pt,font=\footnotesize] at (1.115,0.355) {$ 3 $};
\node[inner sep=0pt,font=\footnotesize] at (0.885,-0.355) {$ 1 $};
\node[inner sep=0pt,font=\footnotesize] at (2.115,0.355) {$ 2 $};
\node[inner sep=0pt,font=\footnotesize] at (1.885,-0.355) {$ 2 $};
\node[inner sep=0pt,font=\footnotesize] at (3.115,0.355) {$ 1 $};
\node[inner sep=0pt,font=\footnotesize] at (2.885,-0.355) {$ 3 $};
\node[inner sep=0pt,font=\footnotesize] at (4.115,0.355) {$ 2 $};
\node[inner sep=0pt,font=\footnotesize] at (3.885,-0.355) {$ 2 $};
\node[inner sep=0pt,font=\footnotesize] at (5.115,0.355) {$ 3 $};
\node[inner sep=0pt,font=\footnotesize] at (4.885,-0.355) {$ 1 $};
\node[inner sep=0pt,font=\footnotesize] at (5.681,-0.281) {$ 2 $};
\fill (0,0.6) circle (.65pt); \fill (0,-0.6) circle (.65pt);
\fill (1,0.6) circle (.65pt); \fill (1,-0.6) circle (.65pt);
\fill (2,0.6) circle (.65pt); \fill (2,-0.6) circle (.65pt);
\fill (3,0.6) circle (.65pt); \fill (3,-0.6) circle (.65pt);
\fill (4,0.6) circle (.65pt); \fill (4,-0.6) circle (.65pt);
\fill (5,0.6) circle (.65pt); \fill (5,-0.6) circle (.65pt);
\fill (6,0.6) circle (.65pt); \fill (6,-0.6) circle (.65pt);
\node[anchor=west] at (12.150,0) {$ \theta\mathcal Q $};
\node[qvertex] (r3v0) at (8.600,0.800) {$ a_{1} $};
\node[qvertex] (r3v1) at (10.600,0.800) {$ a_{2} $};
\node[qvertex] (r3v2) at (8.600,-0.800) {$ c_{1} $};
\node[qvertex] (r3v3) at (10.600,-0.800) {$ c_{2} $};
\node[qvertex] (r3v4) at (7.600,0.000) {$ b_{1} $};
\node[qvertex] (r3v5) at (8.600,0.000) {$ b_{2} $};
\node[qvertex] (r3v6) at (9.600,0.000) {$ b_{3} $};
\node[qvertex] (r3v7) at (10.600,0.000) {$ b_{4} $};
\node[qvertex] (r3v8) at (11.600,0.000) {$ b_{5} $};
\draw[qarrow] (r3v0) to (r3v1);
\draw[qarrow] (r3v1) to (r3v6);
\draw[qarrow] (r3v1) to (r3v8);
\draw[qarrow] (r3v2) to (r3v4);
\draw[qarrow] (r3v2) to (r3v6);
\draw[qarrow] (r3v3) to (r3v2);
\draw[qarrow] (r3v4) to (r3v5);
\draw[qarrow] (r3v5) to (r3v0);
\draw[qarrow] (r3v5) to (r3v2);
\draw[qarrow] (r3v6) to (r3v5);
\draw[qarrow] (r3v6) to (r3v7);
\draw[qarrow] (r3v7) to (r3v1);
\draw[qarrow] (r3v7) to (r3v3);
\draw[qarrow] (r3v8) to (r3v7);
\draw[transition] (3,-0.750)--(3,-1.750);
\node[anchor=west,font=\scriptsize] at (3.15,-1.250) {3 flips with different labels};
\draw[transition] (11.950,-0.950) .. controls (12.200,-1.100) and (12.200,-1.400) .. (11.950,-1.550);
\end{scope}
\begin{scope}[yshift=-10.000cm]
\draw (0,0.6)--(6,0.6)--(6,-0.6)--(0,-0.6)--cycle;
\draw (1,0.6)--(0,-0.6);
\draw (1,0.6)--(1,-0.6);
\draw (2,0.6)--(1,-0.6);
\draw[samediag] (3,0.6)--(1,-0.6);
\draw (3,0.6)--(2,-0.6);
\draw (3,0.6)--(3,-0.6);
\draw (4,0.6)--(3,-0.6);
\draw[samediag] (5,0.6)--(3,-0.6);
\draw (5,0.6)--(4,-0.6);
\draw (5,0.6)--(5,-0.6);
\draw (6,0.6)--(5,-0.6);
\node[inner sep=0pt,font=\footnotesize] at (0.319,0.281) {$ 2 $};
\node[inner sep=0pt,font=\footnotesize] at (0.681,-0.281) {$ 1 $};
\node[inner sep=0pt,font=\footnotesize] at (1.319,0.281) {$ 3 $};
\node[inner sep=0pt,font=\footnotesize] at (2.115,0.355) {$ 2 $};
\node[inner sep=0pt,font=\footnotesize] at (1.885,-0.355) {$ 2 $};
\node[inner sep=0pt,font=\footnotesize] at (2.681,-0.281) {$ 3 $};
\node[inner sep=0pt,font=\footnotesize] at (3.319,0.281) {$ 1 $};
\node[inner sep=0pt,font=\footnotesize] at (4.115,0.355) {$ 2 $};
\node[inner sep=0pt,font=\footnotesize] at (3.885,-0.355) {$ 2 $};
\node[inner sep=0pt,font=\footnotesize] at (4.681,-0.281) {$ 1 $};
\node[inner sep=0pt,font=\footnotesize] at (5.319,0.281) {$ 3 $};
\node[inner sep=0pt,font=\footnotesize] at (5.681,-0.281) {$ 2 $};
\fill (0,0.6) circle (.65pt); \fill (0,-0.6) circle (.65pt);
\fill (1,0.6) circle (.65pt); \fill (1,-0.6) circle (.65pt);
\fill (2,0.6) circle (.65pt); \fill (2,-0.6) circle (.65pt);
\fill (3,0.6) circle (.65pt); \fill (3,-0.6) circle (.65pt);
\fill (4,0.6) circle (.65pt); \fill (4,-0.6) circle (.65pt);
\fill (5,0.6) circle (.65pt); \fill (5,-0.6) circle (.65pt);
\fill (6,0.6) circle (.65pt); \fill (6,-0.6) circle (.65pt);
\node[anchor=west] at (12.150,0) {$ \theta\mathcal Q $};
\node[qvertex] (r4v0) at (8.600,0.800) {$ a_{1} $};
\node[qvertex] (r4v1) at (10.600,0.800) {$ a_{2} $};
\node[qvertex] (r4v2) at (8.600,-0.800) {$ c_{1} $};
\node[qvertex] (r4v3) at (10.600,-0.800) {$ c_{2} $};
\node[qvertex] (r4v4) at (7.600,0.000) {$ b_{1} $};
\node[qvertex,mutvertex] (r4v5) at (8.600,0.000) {$ b_{2} $};
\node[qvertex] (r4v6) at (9.600,0.000) {$ b_{3} $};
\node[qvertex,mutvertex] (r4v7) at (10.600,0.000) {$ b_{4} $};
\node[qvertex] (r4v8) at (11.600,0.000) {$ b_{5} $};
\draw[qarrow] (r4v0) to (r4v1);
\draw[qarrow] (r4v1) to (r4v6);
\draw[qarrow] (r4v1) to (r4v8);
\draw[qarrow] (r4v2) to (r4v4);
\draw[qarrow] (r4v2) to (r4v6);
\draw[qarrow] (r4v3) to (r4v2);
\draw[qarrow] (r4v4) to (r4v5);
\draw[qarrow] (r4v5) to (r4v0);
\draw[qarrow] (r4v5) to (r4v2);
\draw[qarrow] (r4v6) to (r4v5);
\draw[qarrow] (r4v6) to (r4v7);
\draw[qarrow] (r4v7) to (r4v1);
\draw[qarrow] (r4v7) to (r4v3);
\draw[qarrow] (r4v8) to (r4v7);
\draw[transition] (3,-0.750)--(3,-1.750);
\node[anchor=west,font=\scriptsize] at (3.15,-1.250) {2 flips with the same label};
\draw[transition] (11.950,-0.950) .. controls (12.200,-1.100) and (12.200,-1.400) .. (11.950,-1.550);
\end{scope}
\begin{scope}[yshift=-12.500cm]
\draw (0,0.6)--(6,0.6)--(6,-0.6)--(0,-0.6)--cycle;
\draw[diffdiag] (1,0.6)--(0,-0.6);
\draw (1,0.6)--(1,-0.6);
\draw[diffdiag] (2,0.6)--(1,-0.6);
\draw (2,0.6)--(2,-0.6);
\draw[diffdiag] (3,0.6)--(2,-0.6);
\draw (3,0.6)--(3,-0.6);
\draw[diffdiag] (4,0.6)--(3,-0.6);
\draw (4,0.6)--(4,-0.6);
\draw[diffdiag] (5,0.6)--(4,-0.6);
\draw (5,0.6)--(5,-0.6);
\draw[diffdiag] (6,0.6)--(5,-0.6);
\node[inner sep=0pt,font=\footnotesize] at (0.319,0.281) {$ 2 $};
\node[inner sep=0pt,font=\footnotesize] at (0.681,-0.281) {$ 1 $};
\node[inner sep=0pt,font=\footnotesize] at (1.319,0.281) {$ 3 $};
\node[inner sep=0pt,font=\footnotesize] at (1.681,-0.281) {$ 2 $};
\node[inner sep=0pt,font=\footnotesize] at (2.319,0.281) {$ 2 $};
\node[inner sep=0pt,font=\footnotesize] at (2.681,-0.281) {$ 3 $};
\node[inner sep=0pt,font=\footnotesize] at (3.319,0.281) {$ 1 $};
\node[inner sep=0pt,font=\footnotesize] at (3.681,-0.281) {$ 2 $};
\node[inner sep=0pt,font=\footnotesize] at (4.319,0.281) {$ 2 $};
\node[inner sep=0pt,font=\footnotesize] at (4.681,-0.281) {$ 1 $};
\node[inner sep=0pt,font=\footnotesize] at (5.319,0.281) {$ 3 $};
\node[inner sep=0pt,font=\footnotesize] at (5.681,-0.281) {$ 2 $};
\fill (0,0.6) circle (.65pt); \fill (0,-0.6) circle (.65pt);
\fill (1,0.6) circle (.65pt); \fill (1,-0.6) circle (.65pt);
\fill (2,0.6) circle (.65pt); \fill (2,-0.6) circle (.65pt);
\fill (3,0.6) circle (.65pt); \fill (3,-0.6) circle (.65pt);
\fill (4,0.6) circle (.65pt); \fill (4,-0.6) circle (.65pt);
\fill (5,0.6) circle (.65pt); \fill (5,-0.6) circle (.65pt);
\fill (6,0.6) circle (.65pt); \fill (6,-0.6) circle (.65pt);
\node[anchor=west] at (12.150,0) {$ \mathcal Q^{\mathrm{op}} $};
\node[qvertex] (r5v0) at (8.600,0.800) {$ a_{1} $};
\node[qvertex] (r5v1) at (10.600,0.800) {$ a_{2} $};
\node[qvertex] (r5v2) at (8.600,-0.800) {$ c_{1} $};
\node[qvertex] (r5v3) at (10.600,-0.800) {$ c_{2} $};
\node[qvertex] (r5v4) at (7.600,0.000) {$ b_{1} $};
\node[qvertex] (r5v5) at (8.600,0.000) {$ b_{2} $};
\node[qvertex] (r5v6) at (9.600,0.000) {$ b_{3} $};
\node[qvertex] (r5v7) at (10.600,0.000) {$ b_{4} $};
\node[qvertex] (r5v8) at (11.600,0.000) {$ b_{5} $};
\draw[qarrow] (r5v0) to (r5v1);
\draw[qarrow] (r5v0) to (r5v5);
\draw[qarrow] (r5v1) to (r5v7);
\draw[qarrow] (r5v2) to (r5v5);
\draw[qarrow] (r5v3) to (r5v2);
\draw[qarrow] (r5v3) to (r5v7);
\draw[qarrow] (r5v4) to (r5v0);
\draw[qarrow] (r5v5) to (r5v4);
\draw[qarrow] (r5v5) to (r5v6);
\draw[qarrow] (r5v6) to (r5v0);
\draw[qarrow] (r5v6) to (r5v3);
\draw[qarrow] (r5v7) to (r5v6);
\draw[qarrow] (r5v7) to (r5v8);
\draw[qarrow] (r5v8) to (r5v3);
\draw[transition] (3,-0.750)--(3,-1.750);
\node[anchor=west,font=\scriptsize] at (3.15,-1.250) {6 flips with different labels};
\draw[transition] (11.950,-0.950) .. controls (12.200,-1.100) and (12.200,-1.400) .. (11.950,-1.550);
\end{scope}
\begin{scope}[yshift=-15.000cm]
\fill[cornertri] (0,-0.6)--(1,-0.6)--(0,0.6);
\fill[cornertri] (5,0.6)--(6,0.6)--(6,-0.6);
\draw (0,0.6)--(6,0.6)--(6,-0.6)--(0,-0.6)--cycle;
\draw (0,0.6)--(1,-0.6);
\draw (1,0.6)--(1,-0.6);
\draw (1,0.6)--(2,-0.6);
\draw (2,0.6)--(2,-0.6);
\draw (2,0.6)--(3,-0.6);
\draw (3,0.6)--(3,-0.6);
\draw (3,0.6)--(4,-0.6);
\draw (4,0.6)--(4,-0.6);
\draw (4,0.6)--(5,-0.6);
\draw (5,0.6)--(5,-0.6);
\draw (5,0.6)--(6,-0.6);
\node[inner sep=0pt,font=\footnotesize] at (0.319,-0.281) {$ 1 $};
\node[inner sep=0pt,font=\footnotesize] at (0.681,0.281) {$ 2 $};
\node[inner sep=0pt,font=\footnotesize] at (1.319,-0.281) {$ 2 $};
\node[inner sep=0pt,font=\footnotesize] at (1.681,0.281) {$ 3 $};
\node[inner sep=0pt,font=\footnotesize] at (2.319,-0.281) {$ 3 $};
\node[inner sep=0pt,font=\footnotesize] at (2.681,0.281) {$ 2 $};
\node[inner sep=0pt,font=\footnotesize] at (3.319,-0.281) {$ 2 $};
\node[inner sep=0pt,font=\footnotesize] at (3.681,0.281) {$ 1 $};
\node[inner sep=0pt,font=\footnotesize] at (4.319,-0.281) {$ 1 $};
\node[inner sep=0pt,font=\footnotesize] at (4.681,0.281) {$ 2 $};
\node[inner sep=0pt,font=\footnotesize] at (5.319,-0.281) {$ 2 $};
\node[inner sep=0pt,font=\footnotesize] at (5.681,0.281) {$ 3 $};
\fill (0,0.6) circle (.65pt); \fill (0,-0.6) circle (.65pt);
\fill (1,0.6) circle (.65pt); \fill (1,-0.6) circle (.65pt);
\fill (2,0.6) circle (.65pt); \fill (2,-0.6) circle (.65pt);
\fill (3,0.6) circle (.65pt); \fill (3,-0.6) circle (.65pt);
\fill (4,0.6) circle (.65pt); \fill (4,-0.6) circle (.65pt);
\fill (5,0.6) circle (.65pt); \fill (5,-0.6) circle (.65pt);
\fill (6,0.6) circle (.65pt); \fill (6,-0.6) circle (.65pt);
\node[anchor=west] at (12.150,0) {$ \mathcal Q^{\mathrm{op}} $};
\node[qvertex] (r6v0) at (8.600,0.800) {$ a_{1} $};
\node[qvertex] (r6v1) at (10.600,0.800) {$ a_{2} $};
\node[qvertex] (r6v2) at (8.600,-0.800) {$ c_{1} $};
\node[qvertex] (r6v3) at (10.600,-0.800) {$ c_{2} $};
\node[qvertex] (r6v4) at (7.600,0.000) {$ b_{1} $};
\node[qvertex] (r6v5) at (8.600,0.000) {$ b_{2} $};
\node[qvertex] (r6v6) at (9.600,0.000) {$ b_{3} $};
\node[qvertex] (r6v7) at (10.600,0.000) {$ b_{4} $};
\node[qvertex] (r6v8) at (11.600,0.000) {$ b_{5} $};
\draw[qarrow] (r6v0) to (r6v1);
\draw[qarrow] (r6v0) to (r6v5);
\draw[qarrow] (r6v1) to (r6v7);
\draw[qarrow] (r6v2) to (r6v5);
\draw[qarrow] (r6v3) to (r6v2);
\draw[qarrow] (r6v3) to (r6v7);
\draw[qarrow] (r6v4) to (r6v0);
\draw[qarrow] (r6v5) to (r6v4);
\draw[qarrow] (r6v5) to (r6v6);
\draw[qarrow] (r6v6) to (r6v0);
\draw[qarrow] (r6v6) to (r6v3);
\draw[qarrow] (r6v7) to (r6v6);
\draw[qarrow] (r6v7) to (r6v8);
\draw[qarrow] (r6v8) to (r6v3);
\draw[transition] (3,-0.750)--(3,-1.750);
\node[anchor=west,font=\scriptsize] at (3.15,-1.250) {left and right reflections};
\draw[transition] (11.950,-0.950) .. controls (12.200,-1.100) and (12.200,-1.400) .. (11.950,-1.550);
\end{scope}
\begin{scope}[yshift=-17.500cm]
\draw (0,0.6)--(6,0.6)--(6,-0.6)--(0,-0.6)--cycle;
\draw (1,0.6)--(0,-0.6);
\draw[samediag] (2,0.6)--(0,-0.6);
\draw (2,0.6)--(1,-0.6);
\draw[samediag] (3,0.6)--(1,-0.6);
\draw (3,0.6)--(2,-0.6);
\draw[samediag] (4,0.6)--(2,-0.6);
\draw (4,0.6)--(3,-0.6);
\draw[samediag] (5,0.6)--(3,-0.6);
\draw (5,0.6)--(4,-0.6);
\draw[samediag] (6,0.6)--(4,-0.6);
\draw (6,0.6)--(5,-0.6);
\node[inner sep=0pt,font=\footnotesize] at (0.319,0.281) {$ 1 $};
\node[inner sep=0pt,font=\footnotesize] at (1.115,0.355) {$ 2 $};
\node[inner sep=0pt,font=\footnotesize] at (0.885,-0.355) {$ 2 $};
\node[inner sep=0pt,font=\footnotesize] at (2.115,0.355) {$ 3 $};
\node[inner sep=0pt,font=\footnotesize] at (1.885,-0.355) {$ 3 $};
\node[inner sep=0pt,font=\footnotesize] at (3.115,0.355) {$ 2 $};
\node[inner sep=0pt,font=\footnotesize] at (2.885,-0.355) {$ 2 $};
\node[inner sep=0pt,font=\footnotesize] at (4.115,0.355) {$ 1 $};
\node[inner sep=0pt,font=\footnotesize] at (3.885,-0.355) {$ 1 $};
\node[inner sep=0pt,font=\footnotesize] at (5.115,0.355) {$ 2 $};
\node[inner sep=0pt,font=\footnotesize] at (4.885,-0.355) {$ 2 $};
\node[inner sep=0pt,font=\footnotesize] at (5.681,-0.281) {$ 3 $};
\fill (0,0.6) circle (.65pt); \fill (0,-0.6) circle (.65pt);
\fill (1,0.6) circle (.65pt); \fill (1,-0.6) circle (.65pt);
\fill (2,0.6) circle (.65pt); \fill (2,-0.6) circle (.65pt);
\fill (3,0.6) circle (.65pt); \fill (3,-0.6) circle (.65pt);
\fill (4,0.6) circle (.65pt); \fill (4,-0.6) circle (.65pt);
\fill (5,0.6) circle (.65pt); \fill (5,-0.6) circle (.65pt);
\fill (6,0.6) circle (.65pt); \fill (6,-0.6) circle (.65pt);
\node[anchor=west] at (12.150,0) {$ \mathcal Q^{\mathrm{op}} $};
\node[qvertex] (r7v0) at (8.600,0.800) {$ a_{1} $};
\node[qvertex,mutvertex] (r7v1) at (10.600,0.800) {$ a_{2} $};
\node[qvertex,mutvertex] (r7v2) at (8.600,-0.800) {$ c_{1} $};
\node[qvertex] (r7v3) at (10.600,-0.800) {$ c_{2} $};
\node[qvertex,mutvertex] (r7v4) at (7.600,0.000) {$ b_{1} $};
\node[qvertex] (r7v5) at (8.600,0.000) {$ b_{2} $};
\node[qvertex,mutvertex] (r7v6) at (9.600,0.000) {$ b_{3} $};
\node[qvertex] (r7v7) at (10.600,0.000) {$ b_{4} $};
\node[qvertex,mutvertex] (r7v8) at (11.600,0.000) {$ b_{5} $};
\draw[qarrow] (r7v0) to (r7v1);
\draw[qarrow] (r7v0) to (r7v5);
\draw[qarrow] (r7v1) to (r7v7);
\draw[qarrow] (r7v2) to (r7v5);
\draw[qarrow] (r7v3) to (r7v2);
\draw[qarrow] (r7v3) to (r7v7);
\draw[qarrow] (r7v4) to (r7v0);
\draw[qarrow] (r7v5) to (r7v4);
\draw[qarrow] (r7v5) to (r7v6);
\draw[qarrow] (r7v6) to (r7v0);
\draw[qarrow] (r7v6) to (r7v3);
\draw[qarrow] (r7v7) to (r7v6);
\draw[qarrow] (r7v7) to (r7v8);
\draw[qarrow] (r7v8) to (r7v3);
\draw[transition] (3,-0.750)--(3,-1.750);
\node[anchor=west,font=\scriptsize] at (3.15,-1.250) {5 flips with the same label};
\draw[transition] (11.950,-0.950) .. controls (12.200,-1.100) and (12.200,-1.400) .. (11.950,-1.550);
\end{scope}
\begin{scope}[yshift=-20.000cm]
\draw (0,0.6)--(6,0.6)--(6,-0.6)--(0,-0.6)--cycle;
\draw (1,0.6)--(0,-0.6);
\draw (1,0.6)--(1,-0.6);
\draw (2,0.6)--(1,-0.6);
\draw (2,0.6)--(2,-0.6);
\draw (3,0.6)--(2,-0.6);
\draw (3,0.6)--(3,-0.6);
\draw (4,0.6)--(3,-0.6);
\draw (4,0.6)--(4,-0.6);
\draw (5,0.6)--(4,-0.6);
\draw (5,0.6)--(5,-0.6);
\draw (6,0.6)--(5,-0.6);
\node[inner sep=0pt,font=\footnotesize] at (0.319,0.281) {$ 1 $};
\node[inner sep=0pt,font=\footnotesize] at (0.681,-0.281) {$ 2 $};
\node[inner sep=0pt,font=\footnotesize] at (1.319,0.281) {$ 2 $};
\node[inner sep=0pt,font=\footnotesize] at (1.681,-0.281) {$ 3 $};
\node[inner sep=0pt,font=\footnotesize] at (2.319,0.281) {$ 3 $};
\node[inner sep=0pt,font=\footnotesize] at (2.681,-0.281) {$ 2 $};
\node[inner sep=0pt,font=\footnotesize] at (3.319,0.281) {$ 2 $};
\node[inner sep=0pt,font=\footnotesize] at (3.681,-0.281) {$ 1 $};
\node[inner sep=0pt,font=\footnotesize] at (4.319,0.281) {$ 1 $};
\node[inner sep=0pt,font=\footnotesize] at (4.681,-0.281) {$ 2 $};
\node[inner sep=0pt,font=\footnotesize] at (5.319,0.281) {$ 2 $};
\node[inner sep=0pt,font=\footnotesize] at (5.681,-0.281) {$ 3 $};
\fill (0,0.6) circle (.65pt); \fill (0,-0.6) circle (.65pt);
\fill (1,0.6) circle (.65pt); \fill (1,-0.6) circle (.65pt);
\fill (2,0.6) circle (.65pt); \fill (2,-0.6) circle (.65pt);
\fill (3,0.6) circle (.65pt); \fill (3,-0.6) circle (.65pt);
\fill (4,0.6) circle (.65pt); \fill (4,-0.6) circle (.65pt);
\fill (5,0.6) circle (.65pt); \fill (5,-0.6) circle (.65pt);
\fill (6,0.6) circle (.65pt); \fill (6,-0.6) circle (.65pt);
\node[anchor=west] at (12.150,0) {$ \mathcal Q $};
\node[qvertex] (r8v0) at (8.600,0.800) {$ a_{1} $};
\node[qvertex] (r8v1) at (10.600,0.800) {$ a_{2} $};
\node[qvertex] (r8v2) at (8.600,-0.800) {$ c_{1} $};
\node[qvertex] (r8v3) at (10.600,-0.800) {$ c_{2} $};
\node[qvertex] (r8v4) at (7.600,0.000) {$ b_{1} $};
\node[qvertex] (r8v5) at (8.600,0.000) {$ b_{2} $};
\node[qvertex] (r8v6) at (9.600,0.000) {$ b_{3} $};
\node[qvertex] (r8v7) at (10.600,0.000) {$ b_{4} $};
\node[qvertex] (r8v8) at (11.600,0.000) {$ b_{5} $};
\draw[qarrow] (r8v0) to (r8v4);
\draw[qarrow] (r8v0) to (r8v6);
\draw[qarrow] (r8v1) to (r8v0);
\draw[qarrow] (r8v2) to (r8v3);
\draw[qarrow] (r8v3) to (r8v6);
\draw[qarrow] (r8v3) to (r8v8);
\draw[qarrow] (r8v4) to (r8v5);
\draw[qarrow] (r8v5) to (r8v0);
\draw[qarrow] (r8v5) to (r8v2);
\draw[qarrow] (r8v6) to (r8v5);
\draw[qarrow] (r8v6) to (r8v7);
\draw[qarrow] (r8v7) to (r8v1);
\draw[qarrow] (r8v7) to (r8v3);
\draw[qarrow] (r8v8) to (r8v7);
\end{scope}
\end{tikzpicture}
\caption{The sequence of flips and mutations corresponding to the rotation \(\rot\). It gives the time evolution of the level \(\ell = 3\) restricted \(Y\)-system of type \(B_2^{(1)}\).
 The right column shows the quiver of each seed, where \(a_j=(1,j)\), \(b_j=(2,j)\), and \(c_j=(3,j)\).}
\label{fig:appendix-b2-cluster}
\end{figure}

For each \(i\in I\), let \(d_i\) be the size of its \(\theta\)-orbit, and set \(H=d\ell\).
Choose integers \(\eta_i\) such that
\[
 0\le\eta_i<2d_i,\qquad
 \eta_{\theta(i)}\equiv\eta_i+2\pmod{2d_i}.
\]
Moreover, for adjacent vertices \(i,j\), we require that
\(\eta_j\equiv\eta_i+d_i\pmod{2d_i}\) if \(d_i=d_j\), and
that \(\eta_i,\eta_j\) have different parities if \(d_i\ne d_j\).
Such integers can be chosen using the tree structure of the Dynkin diagram.
In this section, we use the positive braid of the \(\theta\)-Coxeter element corresponding to this choice,
\[
 c=\prod_{\eta_i=0}s_i\;\cdot\!\prod_{\eta_i=1}s_i.
\]
The factors within each product commute, so this does not depend on their order.

Using two copies \(I^\pm=\{i^\pm:i\in I\}\) of \(I\),
we call a finite word in \(I^-\sqcup I^+\) a \emph{double word}.
We define the initial word \(\mathbf{i}\) by arranging, in increasing order of \(x\), the pairs \((x,i^\pm)\) of an integer \(x\) and a letter that satisfy
\begin{equation}
 \begin{aligned}
 (x,i^-):&\quad 0\le x<H,&&x\equiv\eta_i\pmod{2d_i},\\
 (x,i^+):&\quad 0<x\le H,&&x\equiv-\eta_i\pmod{2d_i},
 \end{aligned}
 \label{eq:cluster-double-word}
\end{equation}
and concatenating their letter parts.
For the same \(x\), positive letters are placed first, and the order of mutually commuting letters of the same sign is fixed.
Let \(p,q\) be the positive braids represented by the negative and positive subwords of \(\mathbf{i}\), respectively.
Letting \(q^\circ\) be the positive braid obtained by reversing the word of \(q\), we have \(pq^\circ=\beta^\ell\).
Therefore the composition of left and right reflection maps gives an equivalence \(\Conf^{\beta^\ell}_\varnothing\simeq\Conf^p_q\).
In the triangulation corresponding to \(\mathbf{i}\) \cite[Definition~3.1]{ShenWeng2021},
triangles with the same label appear alternately pointing upward and downward.

The mutable vertices of the seed of this triangulation are the closed strings between occurrences number \(h\) and \(h+1\) of the label \(i\),
denoted by \((i,h)\). Let \(V\) be the set of them, and for \(r=0,1\), define \(M_r\subset V\) by
\begin{equation}
 V=\{(i,h):i\in I,\ 1\le h<H/d_i\},\qquad
 M_r=\{(i,h)\in V:\eta_i+d_i h\equiv r\pmod{2d_i}\}.
 \label{eq:cluster-mutation-sets}
\end{equation}
The mutations within each \(M_r\) commute, and we write \(\mu_{M_r}\) for their composite.
Using Shen--Weng's description of coordinate transformations under flips and reflection maps
\cite[Propositions~3.7, 3.17, 4.2, 4.11]{ShenWeng2021},
we obtain the following expression for the rotation in cluster Poisson coordinates:
\begin{equation}
 \rot=\mu_{M_0}\circ\mu_{M_1}\circ\theta.
 \label{eq:cluster-rotation}
\end{equation}

In particular, when \(\Aff_\theta(\Dcal^\vee)^\vee\) is one of
\(B_r^{(1)},C_r^{(1)},F_4^{(1)},G_2^{(1)}\),
the map \(\rot^{-1}\) corresponds to the time evolution
\(u\mapsto u+2/d\) of the level \(\ell\) restricted \(Y\)-system of
\cite[\S2]{KunibaNakanishi1992}.
Therefore, Corollary~\ref{cor:appendix-periodicity} gives
an alternative geometric proof of the \(2(\ell+h^\vee)\)-periodicity of the level restricted \(Y\)-systems \cite{IIKKN2013B,IIKKN2013CFG}.
Figure~\ref{fig:appendix-b2-cluster} shows an example of type \(B_2^{(1)}\) for \(\Dcal=A_3\), \(\theta=(1\,3)\), \(\ell=3\),
with the choice \((\eta_1,\eta_2,\eta_3)=(0,1,2)\).
In this case, \(c=12\), \(\beta=1232\), \(p=123212\), and \(q=232123\).

\printbibliography

\bigskip

\noindent
\textsc{Yuma Mizuno, School of Mathematical Sciences, University College Cork, Western Road,
Cork, Ireland.}\par
\noindent
\textit{Email address}: \texttt{YMizuno@ucc.ie}

\end{document}